\tikzset{node distance=2cm, auto}
\newcommand{\conj}[1]{\quad\textnormal{ #1 }\quad}
\newcommand{\inp}[1]{\ensuremath{\langle #1 \rangle}}
\newcommand{\module}[0]{\operatorname{-mod}}
\newcommand{\normaltext}[1]{\textnormal{#1}}
\def\imod#1{\allowbreak\mkern2.5mu({\operator@font mod}\,#1)}
\renewcommand{\a}{\alpha}
\renewcommand{\b}{\beta}
\newcommand{\e}{\epsilon}
\newcommand{\vp}{\varphi}
\newcommand{\opp}{\oplus}
\newcommand{\ott}{\otimes}
\newcommand{\s}{\sigma}
\newcommand{\Ga}{\Gamma}
\newcommand{\ga}{\gamma}
\renewcommand{\th}{\theta}
\renewcommand{\d}{\delta}
\newcommand{\CPic}[1]{
\begin{minipage}{.45in}
\includegraphics[scale=.75]{#1}
\end{minipage}
}
\newcommand{\CPPic}[1]{
\begin{minipage}{.8in}
\includegraphics[scale=1.1]{#1}
\end{minipage}
}
\newcommand{\BCPPic}[1]{
\begin{minipage}{1.6in}
\includegraphics[scale=2]{#1}
\end{minipage}
}
\newcommand{\MPic}[1]{
\begin{minipage}{.35in}
\includegraphics[scale=.45]{#1}
\end{minipage}
}
\newcommand{\BPic}[1]{
\begin{minipage}{1in}
\includegraphics[scale=1.5]{#1}
\end{minipage}
}
\newcommand{\aA}{\mathcal{A}}
\newcommand{\aC}{\mathcal{C}}
\newcommand{\aD}{\mathcal{D}}
\newcommand{\aK}{\mathcal{K}}
\newcommand{\aM}{\mathcal{M}}
\newcommand{\aN}{\mathcal{N}}
\newcommand{\aP}{\mathcal{P}}
\newcommand{\aQ}{\mathcal{Q}}
\newcommand{\aR}{\mathcal{R}}
\newcommand{\aW}{\mathcal{W}}
\newcommand{\aZ}{\mathcal{Z}}
\newcommand{\ba}{\mathbf{a}}
\newcommand{\fZ}{\mathfrak{Z}}
\newcommand{\fii}{\mathfrak{i}}
\newcommand{\fz}{\mathfrak{z}}
\newcommand{\CC}{\mathbb{C}}
\newcommand{\FF}{\mathbb{F}}
\newcommand{\LL}{\mathbb{L}}
\newcommand{\NN}{\mathbb{N}}
\newcommand{\RR}{\mathbb{R}}
\newcommand{\TT}{\mathbb{T}}
\newcommand{\ZZ}{\mathbb{Z}}
\newcommand{\eA}{\EuScript{A}}
\newcommand{\eB}{\EuScript{B}}
\newcommand{\eC}{\EuScript{C}}
\newcommand{\eD}{\EuScript{D}}
\newcommand{\eE}{\EuScript{E}}
\newcommand{\eR}{\EuScript{R}}
\newcommand{\eS}{\EuScript{S}}
\newcommand{\eX}{\EuScript{X}}
\theoremstyle{plain}
\newtheorem{thm}{Theorem}[section]
\newtheorem{theorem}[thm]{Theorem}
\newtheorem{question}[thm]{Question}
\newtheorem{assumption}[thm]{Assumption}
\newtheorem{conjecture}[thm]{Conjecture}
\newtheorem{upropertyy}[thm]{Universal property}
\newtheorem{proposition}[thm]{Proposition}
\newtheorem{prop}[thm]{Proposition}
\newtheorem{corollary}[thm]{Corollary}
\newtheorem{cor}[thm]{Corollary}
\newtheorem{lemma}[thm]{Lemma}
\theoremstyle{remark}
\theoremstyle{definition}
\newtheorem{example}[thm]{Example}
\newtheorem{defn}[thm]{Definition}
\newtheorem{definition}[thm]{Definition}
\newtheorem{rmk}[thm]{Remark}
\newtheorem{remark}[thm]{Remark}
\numberwithin{equation}{section}
\let\emptyset\varnothing
\def\imod#1{\allowbreak\mkern2.5mu({\operator@font mod}\,#1)}
\newcommand{\Ad}{\aZ}
\newcommand{\lns}{Z}
\newcommand{\pts}{\ba}
\newcommand{\asuf}{F(\aZ)}
\newcommand{\asurf}{\asuf}
\newcommand{\param}{\aP}
\newcommand{\zigzag}{\aM}
\newcommand{\dr}{\tilde{I}}
\newcommand{\alg}{\aA}
\newcommand{\z}{\fz}
\newcommand{\an}{an}
\newcommand{\Zi}{\fZ}
\newcommand{\vnp}[1]{\lvert #1 \rvert}
\newcommand{\I}{[0,1]}
\newcommand{\xto}[1]{\xrightarrow{#1}}
\newcommand{\xfrom}[1]{\xleftarrow{#1}}
\newcommand{\from}{\leftarrow}
\newcommand{\xlto}[1]{\xlongrightarrow{#1}}
\newcommand{\spc}{\mathfrak{e}}
\newcommand{\Hqe}{\normaltext{Hqe}}
\newcommand{\Hmo}{\normaltext{Hmo}}
\newcommand{\Kom}{Kom}
\newcommand{\Ho}{Ho}
\newcommand{\dgcat}{\normaltext{dgcat}_k}
\newcommand{\Forget}{\normaltext{Forget}}
\newcommand{\Mat}{\textrm{Mat}}
\newcommand{\op}{\normaltext{op}}
\theoremstyle{plain}
\newcommand{\Sp}{R_+}
\newcommand{\Sm}{R_-}
\newcommand{\Co}{\mathcal{C}\hspace{-.1475em}o}
\newcommand{\Yi}{\mathcal{Y}}
\newcommand{\Ko}[0]{\aK\hspace{-.1475em}o}
\newcommand{\PKo}[0]{\normaltext{Pre-}\Ko}
\newcommand{\PPKo}[0]{\normaltext{Pre-Pre-}\Ko}
\newcommand{\Si}[0]{\Sigma}
\newcommand{\KoS}[0]{\Ko(\Sigma)}
\newcommand{\KoSm}[0]{\Ko(\Sigma,m)}
\newcommand{\KoSmn}[0]{\Ko^n(\Sigma,m)}
\newcommand{\PKoSm}[0]{\PKo(\Sigma,m)}
\newcommand{\PKoSmn}[0]{\PKo^n(\Sigma,m)}
\newcommand{\PKoS}[0]{\PKo(\Sigma)}
\newcommand{\PPKoS}[0]{\PPKo(\Sigma)}
\newcommand{\HoKoS}[0]{\Ho(\Ko(\Sigma))}
\newcommand{\mcgS}[0]{\Gamma(\Sigma)}
\newcommand{\Vect}{\normaltext{Vect}}
\newcommand{\Hom}{Hom}
\newcommand{\im}{im}
\newcommand{\Aut}{Aut}
\newcommand{\Endo}{End}
\newcommand{\pt}{\normaltext{pretr}}
\newcommand{\perf}{\normaltext{perf}}
\newcommand{\Obj}{Ob}
\newcommand{\Ob}{\Obj}
\renewcommand{\s}{\sigma}
\begin{document}

\title[Formal Contact Categories]{Formal Contact Categories}
\author[Benjamin Cooper]{Benjamin Cooper}
\address{University of Iowa, Department of Mathematics, 14 MacLean Hall, Iowa City, IA 52242-1419}
\email{ben-cooper\char 64 uiowa.edu}

\begin{abstract}
  To each oriented surface $\Sigma$, we associate a differential graded
  category $\KoS$. The homotopy category $\HoKoS$ is a triangulated category
  which satisfies properties akin to those of the contact categories studied
  by K. Honda. These categories are also related to the algebraic contact
  categories of Y. Tian and to the bordered sutured categories of R. Zarev. 
\end{abstract}

\maketitle
\setcounter{tocdepth}{1}
\setcounter{secnumdepth}{2}
\tableofcontents

\section{Introduction}\label{introsec}

The purpose of this paper is to associate a differential graded category
$\KoS$ to each oriented surface $\Si$. This category is used to study
comparison problems between the categories associated to surfaces by
Seiberg-Witten-type manifold invariants. For example, we prove that the
categories associated to the disk $(D^2,2n)$ with $2n$ marked points by each
theory are equivalent and there is a functorial relationship between the
categories associated to a surfaces with boundary when they can be defined.

\subsection{The unicity of Floer-type invariants of 3-manifolds}

In \cite{OZ1,OZ2} P. Ozsv\'{a}th and Z. Szab\'{o} introduced invariants of
3-manifolds known as the Heegaard-Floer homologies. Depending upon the
setting of a parameter $U$, there are homology groups: $HF^{-}_*(M)$,
$HF^{+}_*(M)$, $HF^{\infty}_*(M)$ which fit into a long exact sequence:
\begin{equation}\label{hfleseqn}
\cdots \to HF^-_*(M) \to HF^{\infty}_*(M) \to HF^+_*(M) \to \cdots.
\end{equation}
When the parameter $U=0$, there are simpler invariants $\widehat{HF}_*(M)$.
The Heegaard-Floer theory has had a profound effect on the study of
3-manifolds and 4-manifolds \cite{AJ}. This is in part because it was
originally conceived of as a means by which one can obtain information in
the Seiberg-Witten invariants \cite{Donaldson, KM, W}. The relationship
between the Heegaard-Floer homology theory and the Seiberg-Witten Floer
homology was recently articulated by two independent groups of researchers:
\c{C}. Kutluhan, Y-J. Lee, C. H. Taubes \cite{TaubesGroup} and V. Colin,
P. Ghiggini, K. Honda \cite{HondaGroup}. Both teams built upon the earlier
work of C. H. Taubes \cite{Taubes1}, which identified the Seiberg-Witten
Floer homologies $\widehat{HM}_*(M)$ with the Embedded Contact Homology
$ECH_*(M)$ due to M. Hutchings \cite{H1} and M. Hutchings and C. H. Taubes
\cite{H2,H3}:
$$\Omega : ECH_*(M) \xto{\sim} \widehat{HM}_*(M).$$
Using the Embedded Contact Homology as an intermediary, both groups
completed the diagram:
$$\begin{tikzpicture}[scale=10, node distance=2.5cm]
\node (A1) {$ECH_*(M)$};
\node (X) [right=1.25cm of A1] {};
\node (B1) [above=1.25cm of X] {$HF^-_*(M)$};
\node (C1) [below=1.25cm of X] {$\widehat{HM}_*(M),$};
\draw[->] (A1) to node {} (B1);
\draw[->] (A1) to node [swap]  {$\Omega$} (C1);
\draw[->] (B1) to node  {} (C1);
\end{tikzpicture}$$
in a fashion which preserved essential properties of the three homology
theories. In particular, the maps defined respect decompositions with respect to
$Spin^{\CC}$ structures, carry invariants of contact structures to
invariants of contact structures, preserve the long exact
sequence \eqref{hfleseqn} and reductions to the simpler, $U=0$, theory:
\begin{equation}\label{heq}
\widehat{ECH}_*(M) \cong \widehat{HF}_*(M) \cong \widetilde{HM}_*(M).
\end{equation}

Intuitively, each component in the equation above corresponds to a
codimension 1 piece of a 4-dimensional topological field theory. It is
evident that such a theory satisfies the following properties. In
codimension 1, a topological field theory associates a chain complex $C(M)$
to each oriented 3-manifold $M$. The homology of this chain complex $H_*(C(M))$ is an
invariant of the diffeomorphism type of the 3-manifold.  In codimension 2, a
topological field theory associates a differential graded category
$\aC(\Si)$ to each oriented surface $\Si$. The derived category
$D(\aC(\Si))$ of right $\aC(\Si)$-modules \cite{Kellerder, Kellerdg} is an
invariant of the diffeomorphism type of the surface and reversing the
orientation of the surface produces the opposite dg category:
$$\aC(\bar{\Si}) \cong \aC(\Si)^{\op}.$$ 
To each 3-manifold $X$ with boundary $\partial X = \Si$, there is a right $\aC(\Si)$-module $X_*$. When a 3-manifold $M$ is split along a surface $M = X \cup_\Si Y$, the invariant $C(M)$ corresponding to $M$ is quasi-isomorphic to the tensor product,
$$C(M) \xto{\sim} X_* \ott^{\LL}_{\aC(\Si)} (Y_*)^{\op},$$
of the modules associated to each piece. If the identifications made by Equation \eqref{heq} result from an
equivalence between topological field theories then the codimension 2
extensions of these topological field theories must be equivalent as
well. 

\begin{question}
  Is there an equivalence between codimension 2 extensions of Seiberg-Witten
  Floer, Heegaard-Floer and Embedded Contact Homology?
\end{question}

In this paper, we study the simpler question of establishing a relationship
between the categories associated to oriented surfaces $\Si$ by
Heegaard-Floer theory and contact topology.

The Heegaard-Floer homology $\widehat{HF}^*(M)$ was extended to surfaces and
3-manifolds with boundary, in the manner described above, by the authors
P. Ozsv\'{a}th, R. Lipshitz and D. Thurston \cite{LOT}. The theory was
further developed by R. Zarev \cite{Zarev1, Zarev2}. In particular, when an
oriented surface $\Si$ sports a handle decomposition, determined by
combinatorial data $\Ad$ called an {\em arc parameterization}, there is a dg
category $\alg(-\Ad)$ which is associated to the surface $\Si$. The Morita
homotopy class of the corresponding categories of dg modules are independent
of the handle decomposition $\Ad$.

On the contact side, K. Honda has conjectured the existence of a family of
triangulated categories $\Co(\Si)$ associated to oriented surfaces $\Si$
called {\em contact categories} \cite{KoHo}. These categories might function
as part of a codimension 2 component of the Embedded Contact homology.  The
morphisms of contact categories are isotopy classes of tight contact
structures on a thickened surface $\Si\times [0,1]$. Maps in $\Co(\Si)$ are
composed by gluing $\Si\times [0,1]$ to $\Si \times [0,1]$ and rescaling.
The contact categories $\Co(\Si)$ are conjectured to contain distinguished
triangles associated to special contact structures called bypass
moves. Unfortunately, this construction is not yet available in its full
generality. For disks and annuli, algebraic analogues of these categories
were introduced and studied by Y. Tian \cite{YT1,YT2}.

\subsection{Summary of main results}

In this paper, we associate a $\mathbb{Z}/2$-linear dg category $\Ko(\Si)$
to each oriented surface $\Si$. This category satisfies a universal property
which guarantees the existence of a unique map to a dg enhancement of any
contact category $\Co(\Si)$, when it exists.

\begin{upropertyy} If $\eX$ is a pretriangulated dg category for which there are choices of maps $\th : \ga \to \ga'$ corresponding to bypass moves between dividing sets $\ga,\ga'\subset\Si$ and these maps satisfy four properties:
\begin{enumerate}
\item Bypass moves are cycles.
\item Trivial bypass moves are equal to identity.
\item Disjoint bypass moves commute.
\item Associated to each bypass move is an exact triangle of the form:
\begin{center}
\begin{tikzpicture}[scale=10, node distance=2.5cm]
\node (A2) {\CPic{one2}};
\node (B2) [right=1cm of A2] {};
\node (C2) [right=1cm of B2] {\CPic{two2}};
\node (D2) [below=1.41421356cm of B2] {\CPic{three2}};
\draw[->, bend left=35] (A2) to node {$\theta_A$} (C2);
\draw[->, bend left=35] (C2) to node {$\theta_B$} (D2);
\draw[->, bend left=35] (D2) to node {$\theta_C$} (A2);
\end{tikzpicture}
\end{center}
\end{enumerate}
then there is a unique map $\KoS \to \eX$ in the homotopy category of
differential graded categories. See Section \ref{formalcatsec} for details.
\end{upropertyy}

Section \ref{algsec} contains algebraic background necessary to produce and
study the categories $\KoS$. The definition of pre-triangulated hull and a
review of Drinfeld-To\"en localization construction for dg categories is
included. A variation of this localization construction is introduced and
related to the standard localization.

Section \ref{formalcatsec} contains a discussion of surface topology needed
for the main construction. The construction of the formal contact categories
$\KoS$ follows immediately by combining these topological considerations and
the localization construction introduced in Section \ref{algsec}. The
remainder of the paper is dedicated to the study of formal contact
categories.

In Section \ref{propertiessec}, we check that the categories satisfy several
elementary properties which were outlined by K. Honda. In particular,
Corollary \ref{tightcor} shows that non-trivial boundary conditions are
necessary for Giroux's tightness criterion to be satisfied. Theorem
\ref{surfacetheorem} shows that when such boundary conditions are present,
the triangulated structure allows one to simplify the category by writing
dividing sets which do not interact with the boundary in terms of those
which do, up to homotopy equivalence. In Section \ref{possec}, formal
contact categories $\KoS$ are split into a product of two isomorphic copies
of a subcategory $\Ko_+(\Si)$ called the positive half of the formal contact
category.

In Section \ref{mcgMainsec}, Theorem \ref{mcgthm} shows that the mapping
class group $\Ga(\Si)$ of $\Si$ acts naturally on the category
$\KoS$. Theorem \ref{zarevgenthm} shows that when the surface $\Si$ supports
a handle decomposition, determined by an arc parameterization $\Ad$, this
produces a collection of generators $\Zi(\Ad)$ for the category $\KoS$.
After proving the second statement above, in Section \ref{kodecatsec} we
study additive invariants of $\Ko_+(\Si_{g,1},2)$.

The remainder of the paper is dedicated to an investigation of the
comparison problem between two codimension 2 extensions: contact categories and
Heegaard-Floer categories. The strategy pursued is illustrated by the diagram below:
$$\begin{tikzpicture}[scale=10, node distance=2.5cm]
\node (A1) {$\alg(-\Ad)\module$};
\node (X) [left=1.25cm of A1] {};
\node (B1) [left=1.25cm of X] {$\Ko(\Si)$};
\node (C1) [below=1.25cm of X] {$\Co(\Si)$};
\draw[->] (B1) to node {} (A1);
\draw[->] (A1) to node {} (B1);
\draw[->,dashed] (C1) to node   {} (A1);
\draw[->,dashed] (B1) to node  {} (C1);
\end{tikzpicture}$$
When a reasonable candidate for the geometric contact category $\Co(\Si)$ exists, the dashed lines should be taken to be solid.

In Section \ref{tiansec}, we study the relationship between three categories
associated to the disk $(D^2,2n)$ with $2n$ points fixed along its
boundary. In \cite{YT1}, Y. Tian constructed a candidate $\Yi_n$ for
$\Co(D^2,2n)$ and we introduce an arc parameterization $\zigzag_{n}$ of the
disk $(D^2,2n)$ which gives a dg category $\alg(-\zigzag_n)$ associated to
the Heegaard-Floer package \cite{Zarev1}. The main result of this section is
to prove that the three dg categories are Morita equivalent:
\begin{equation}
\Co(D^2,2n) \cong \Ko_+(D^2, 2n) \cong \alg(-\zigzag_n).
\end{equation}

The category $\alg(-\zigzag_n)$ is a $k$-linear category because the differential
$d$ is always equal to zero.  There are several other instances in which 
categories with this property can be associated to surfaces. In
Section \ref{BFalgsec}, we show that functors from these categories to the
homotopy categories of the appropriate formal contact categories can be
defined.

Section \ref{geosec} applies the universal property, discussed above, in a
much broader context.  The section begins with a discussion of the
relationship between the formal contact categories $\KoS$ and the contact
categories $\Co(\Si)$. The main theorem leverages the universal property to
construct a map:
$$\Ko_+(\Si) \to \alg(-\Ad)\module$$
in the homotopy category of dg categories, from the formal contact category
associated to $\Si$ to the Heegaard-Floer category associated to $\Si$, when
$\Si$ is parameterized by $\Ad$, for any oriented surface $\Si$ with
sufficient boundary conditions.

\subsection*{Acknowledgments}
The construction of contact categories was inspired by the ideas of
K. Honda, Y. Tian and K. Walker \cite{KoHo, YT1, YT2, Walker2}.  The author
would especially like to thank Y. Tian for his helpful correspondence and
the Simons Center for facilitating our discussion. Also Y. Huang,
R. Lipshitz, A. Manion and I. Petkova for several helpful emails. My colleagues
C. Frohman, A. Kaloti, K. Kawamuro and R. Kinser for their cordiality and
their conversation. 

In more detail, the author's involvement in this subject began because of
the mention of a contact TQFT in \S 9.4 of K. Walker's 2006 notes
\cite{Walker2}.  These notes were discussed at length with J. Roberts
between 2006-2009.  Several years later the author spoke to K. Walker about
the possibility of fitting Heegaard Floer theory into his framework in
\cite{Walker2}.  In 2014 the author found Y. Tian's papers \cite{YT1, YT2}
and a recording of K. Honda at MSRI discussing his ideas \cite{KoHo}.  This
project began after several conversations with A. Kaloti.

The author would like to thank the referees for their readings and very
helpful feedback.

The author would also like to thank the organizers and participants of the ``Categorifications of Quantum Groups, Representations and Knot Invariants'' session at the AMS-EMS-SPM meeting in June 2015 where some of these results were announced.

After this paper was posted, a few papers with complementary results have
appeared, see \cite{HondaTian} and \cite{Mathews}.

\newpage
\section{Algebraic constructions}\label{algsec}
\newcommand{\OT}{\th_{1,2}}
\renewcommand{\TT}{\th_{2,3}}
\newcommand{\tTT}{\tilde{\th}_{2,3}}
\newcommand{\TO}{\th_{3,1}}

\newcommand{\tS}{{\tilde{S}}}
\newcommand{\Tp}{D'}
\newcommand{\DI}{\bar{D}}
\newcommand{\cDI}{\widetilde{D}}
\newcommand{\un}{\normaltext{un}}
\newcommand{\dtri}{\DI}
\newcommand{\cdtri}{\cDI}

In this section a discussion of localizations follows a review of
pretriangulated hulls.  Section \ref{tolocsec} reviews the standard
localization procedure for dg categories. Section \ref{postsec} introduces a
form of localization which creates formal extensions among objects in a dg
category: rather than creating homotopy equivalences amongst objects, this
{\em Postnikov} localization introduces distinguished triangles. In Section
\ref{proppostsec}, properties of Postnikov localizations are discussed.

Most of the materials in this section are standard. Some review is found in
the Appendix \ref{dgcatsec}. A review of differential graded categories can
be found in \cite{Kellerdg, Toen} or \cite[\S 1]{DK}; consult \cite{TabAdd,
  Tabuada, T} for technical details. The language of model categories is
reviewed in the reference \cite[\S A.2]{LT}, more details can be found in
the references \cite{Hovey, Quillen}.

\subsection{Pretriangulated hull}\label{trisec}
This section contains a brief discussion of pretriangulated hulls of dg
categories. The key ideas were introduced in \cite[\S 4]{BK}; see also
\cite{BV,D}.

\begin{defn}{(\cite[\S 2.4]{D})}\label{ptdef}
If $\eC$ is a dg category then there exists a dg category $\eC^\pt$ called the {\em pretriangulated hull} of $\eC$. The objects of $\eC^\pt$ are one-sided twisted complexes; i.e. formal expressions
$$x = (\bigoplus_{i=1}^n x_i[r_i],p)\conj{ such that  } dp + p^2 = 0$$
and  $n\geq 0, x_i \in\Obj(\eC)\cup\{0\},r_i\in \ZZ$. The map $p = (p_{i,j})$ is a matrix such that $\vert p_{i,j} \vert = 1$ and
$$p_{i,j} = \left\{ \begin{array}{ll} x_i[r_i] \to x_j[r_j]  & j > i \\ 0 & j \leq i \end{array}\right.$$
If $x,x'\in\Obj(\eC^\pt)$ so that $x = (\bigoplus_{i=1}^n x_i[r_i],p)$ and $x' = (\bigoplus_{i=1}^n x'_i[r'_i],p')$ then $\Hom(x,x')$ consists of matrices $f = (f_{i,j})$, $f_{i,j}\in\Hom^{r'_j-r_i}(x_i,x'_j)$, the composition is given by  matrix multiplication and the differential $d : \Hom(x,x') \to \Hom(x,x')$ is determined by the formula: 
$$(df)_{i,j} = (df)_{i,j} + (p' f)_{i,j} - (-1)^{\vert f_{i,j} \vert} (f p)_{i,j}.$$
\end{defn}

\begin{remark}{(\cite[\S 2.4]{D})}\label{conerem}
If $x,y\in\Ob(\eC)$ and $f : x\to y$ is a closed map of degree zero then the cone
of $f$ exists in $\eC^{\pt}$ by construction: $C(f) = (x \opp y[-1], p)\in \Ob(\eC)$ where
$p_{1,2} = f$ and $p_{1,1} = p_{2,1} = p_{2,2} = 0$.
The objects in $\eC^\pt$ can be obtained by iterated applications of the cone construction.
\end{remark}

A referee notes that the above construction in Remark \ref{conerem} be called ``cocone.''

By construction, the pretriangulated dg category $\eC^{\pt}$ associated to
a $k$-linear category $\eC$ factors through its additive closure
$\Mat(\eC)$: 
$$\Mat(\eC)^\pt \cong \eC^{\pt}.$$
(Or set $p=0$ in Def. \ref{ptdef} above.)
The canonical inclusion $\eC \hookrightarrow \eC^{\pt}$ is fully faithful. A
dg category $\eC$ is {\em pretriangulated} when the functor
$\Ho(\eC) \to \Ho(\eC^\pt)$ induced by inclusion between the associated
homotopy categories is an equivalence of categories. The {\em category of pretriangulated
  dg categories} will be denoted by $\dgcat^\pt$.

Unfamiliar readers may wish to recall that $\Ob(\eC\coprod\eD) := \Ob(\eC)\sqcup \Ob(\eD)$ and
$$\Hom_{\eC\coprod \eD}(x,y) := \left\{ \begin{array}{ll}
\Hom_\eC(x,y) & \normaltext{ if } x,y\in \Ob(\eC)\\
\Hom_\eD(x,y) & \normaltext{ if } x,y\in \Ob(\eD)\\
0 & \normaltext{ otherwise }
\end{array}
\right.$$

The proposition below shows how the pretriangulated hull operation
distributes over coproducts of dg categories. This is a $p\ne 0$
generalization of the analogous statement about additive closures.  It will be
used in Theorem \ref{splitthm}.

\begin{prop}\label{ptmultprop}
If  $\eC, \eD$ are $k$-linear then $(\eC \coprod \eD)^\pt \cong \eC^\pt \prod \eD^\pt$.
\end{prop}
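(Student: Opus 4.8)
The plan is to exhibit an explicit dg functor $\Phi\colon \eC^\pt \prod \eD^\pt \to (\eC\coprod\eD)^\pt$ and verify that it is an equivalence of dg categories. The definition of $\Phi$ is the evident ``block sum.'' Given $c=(\bigoplus_{i=1}^{m}c_i[r_i],p)\in\Obj(\eC^\pt)$ and $d=(\bigoplus_{j=1}^{n}d_j[s_j],q)\in\Obj(\eD^\pt)$, put $\Phi(c,d)=(\bigoplus_{i=1}^{m}c_i[r_i]\oplus\bigoplus_{j=1}^{n}d_j[s_j],\,p\oplus q)$, listing all the $c$-indices before the $d$-indices and letting $p\oplus q$ be the block-diagonal twisting matrix. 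This is a legitimate one-sided twisted complex over $\eC\coprod\eD$: within each block the Maurer--Cartan equation $d(p\oplus q)+(p\oplus q)^2=0$ reduces to the ones for $p$ and for $q$, while the off-diagonal blocks are forced to vanish because $\Hom_{\eC\coprod\eD}(c_i,d_j)=0=\Hom_{\eC\coprod\eD}(d_j,c_i)$ for all $i,j$ — and this vanishing is consistent with the upper-triangularity required in Definition \ref{ptdef} precisely because the $c$-indices come first. On morphisms, $\Phi$ is defined by the analogous inclusion of block-diagonal matrices.

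The next step is full faithfulness. A morphism in $(\eC\coprod\eD)^\pt$ between $\Phi(c,d)$ and $\Phi(c',d')$ is a matrix whose entries in a $c$-row and a $d'$-column, or in a $d$-row and a $c'$-column, land in a zero $\Hom$-space; hence every such matrix is automatically block diagonal, $f=f^\eC\oplus f^\eD$. Since the twisting differentials $p\oplus q$ and $p'\oplus q'$ are themselves block diagonal, the differential formula of Definition \ref{ptdef} and ordinary matrix composition both preserve this block decomposition. Therefore $\Hom_{(\eC\coprod\eD)^\pt}(\Phi(c,d),\Phi(c',d'))\cong\Hom_{\eC^\pt}(c,c')\oplus\Hom_{\eD^\pt}(d,d')$ as complexes, naturally in all variables, and this is exactly $\Hom_{\eC^\pt\prod\eD^\pt}((c,d),(c',d'))$; by construction $\Phi$ realizes this identification and preserves composition and units.

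It remains to show $\Phi$ is essentially surjective — indeed, that every object of $(\eC\coprod\eD)^\pt$ is isomorphic to one in its image. Let $x=(\bigoplus_{i=1}^{N}x_i[r_i],p)$. After discarding the summands with $x_i=0$ (which yields an isomorphic twisted complex), partition the remaining indices into those with $x_i\in\Obj(\eC)$ and those with $x_i\in\Obj(\eD)$; since there are no morphisms between objects of $\eC$ and of $\eD$, $p_{i,j}=0$ whenever $i,j$ lie in different parts. Let $\s$ be the permutation moving every $\eC$-index to the front while preserving the relative order within each part, and set $x^\s=(\bigoplus x_{\s(i)}[r_{\s(i)}],p^\s)$ with $p^\s_{i,j}=p_{\s(i),\s(j)}$. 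This is again a one-sided twisted complex: a nonzero entry $p^\s_{i,j}$ forces $\s(i),\s(j)$ into the same part, on which $\s$ is order preserving, so $j\geq i$. By construction $x^\s=\Phi(c,d)$ for the obvious twisted complexes $c$ over $\eC$ and $d$ over $\eD$, and the associated permutation matrix is a closed, degree-zero, invertible morphism $x\to x^\s$ (morphisms in a pretriangulated hull are arbitrary matrices, with no triangularity constraint, and closedness is the conjugation-invariance of the two Maurer--Cartan elements). Hence $\Phi$ is fully faithful and essentially surjective, so $(\eC\coprod\eD)^\pt\cong\eC^\pt\prod\eD^\pt$.

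I do not expect a genuine obstacle: the entire statement rests on the single fact that $\eC\coprod\eD$ has no morphisms across its two pieces, which forces twisted complexes, their twisting maps, the $\Hom$-complexes between them, the differentials, and composition all to be block diagonal. The only point requiring care is the bookkeeping between the ordering of summands and the upper-triangularity condition of Definition \ref{ptdef}; this is precisely why the argument is carried out through the reordering permutation $\s$ rather than as a strict equality of dg categories.
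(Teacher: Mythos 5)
Your argument is correct and rests on exactly the same key observation as the paper's proof: there are no morphisms between $\eC$ and $\eD$ inside $\eC\coprod\eD$, so every twisted complex, every twisting map $p$, every morphism matrix, and every differential is forced to be block diagonal.

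The packaging differs slightly. The paper defines the two projection functors $\pi_\eC,\pi_\eD$ on $(\eC\coprod\eD)^\pt$ (implicitly using block diagonality to make these well-defined) and then appeals to the universal property of the product. You instead construct the functor $\Phi\colon\eC^\pt\prod\eD^\pt\to(\eC\coprod\eD)^\pt$ in the other direction and verify directly that it is fully faithful and essentially surjective. Your version is more explicit, and in particular it surfaces a small subtlety the paper passes over silently: a twisted complex in $(\eC\coprod\eD)^\pt$ may interleave $\eC$- and $\eD$-summands, so to see it as coming from $\eC^\pt\prod\eD^\pt$ one must reorder the indices, and one must check that the reordering permutation matrix is a closed degree-zero isomorphism compatible with the upper-triangularity constraint of Definition \ref{ptdef}. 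You handle this correctly: the permutation preserves the relative order within each block, so $p^\s$ remains upper triangular, and the conjugation identity $p^\s P_\s = P_\s p$ gives closedness of the permutation isomorphism. Either approach is fine; yours is the more airtight write-up of the same idea.
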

\begin{proof}
Since there are no non-zero maps between $\eC$ and $\eD$, thought of as subcategories of $\eC\coprod \eD$,
a twisted complex $(\bigoplus_{i=1}^n x_i[r_i],p) \in (\eC \coprod \eD)^\pt$ splits into a direct sum of twisted complexes in $\eC^{\pt}$ and $\eD^{\pt}$ respectively. Likewise, matrices $(f_{i,j})$ of maps between twisted complexes in $(\eC \coprod \eD)^\pt$ consist of blocks. It follows that there are functors $\pi_\eC : (\eC \coprod \eD)^\pt \to \eC^\pt$ and $\pi_\eD : (\eC \coprod \eD)^\pt \to \eD^\pt$ which satisfy the universal property of the product.
\end{proof}

The following proposition is well-known, see \cite[\S 1.5]{BV}.

Many of the constructions to follow in this section use ideas which are touched on in Appendix \ref{dgcatsec}.

\begin{prop}\label{pretriprop}
The pretriangulated hull $-^\pt : \dgcat \to \dgcat^\pt$ is left adjoint to the forgetful functor:
$$\Hom_{\dgcat^\pt}(\eC^{\pt},\eD) \cong \Hom_{\dgcat}(\eC,\Forget(\eD)).$$
If $f : \eC \xto{\sim} \eD$ is a quasi-equivalence then $f^\pt : \eC^\pt \to \eD^\pt$ is a quasi-equivalence of dg categories.
\end{prop}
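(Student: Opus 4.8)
The statement has two parts. The first is a strict adjunction $\Hom_{\dgcat^\pt}(\eC^\pt,\eD)\cong\Hom_{\dgcat}(\eC,\Forget(\eD))$ when $\eD$ is pretriangulated; the second is that $(-)^\pt$ preserves quasi-equivalences. I will prove both directly, at the level of honest dg-functors (not up to homotopy), since that is how the proposition is worded and how it is used.

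\medskip

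\emph{Part 1: the adjunction.} The plan is to construct the unit and counit and check the triangle identities. The unit is the fully faithful inclusion $\iota_\eC\co\eC\hookrightarrow\eC^\pt$ noted after Definition \ref{ptdef}. Restriction along $\iota_\eC$ gives a map $\Phi\co\Hom_{\dgcat^\pt}(\eC^\pt,\eD)\to\Hom_{\dgcat}(\eC,\Forget(\eD))$, $F\mapsto F\circ\iota_\eC$ (using that a pretriangulated $\eD$ is in particular a dg category, so the right-hand side makes sense). For the inverse, given a dg-functor $G\co\eC\to\Forget(\eD)$, I first extend it to $G^\pt\co\eC^\pt\to\eD^\pt$ by sending a twisted complex $(\bigoplus_i x_i[r_i],p)$ to $(\bigoplus_i G(x_i)[r_i],G(p))$ and a matrix $(f_{i,j})$ to $(G(f_{i,j}))$; the sign-and-differential conventions in Definition \ref{ptdef} are applied by $G$ verbatim, so this is manifestly a dg-functor and $G^\pt\circ\iota_\eC=\iota_\eD\circ G$. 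Because $\eD$ is pretriangulated, I then need a dg quasi-inverse (or at least a one-sided inverse on the nose) $\eD^\pt\to\eD$; here the subtle point is that "pretriangulated" as defined in the excerpt only asserts $\Ho(\eD)\to\Ho(\eD^\pt)$ is an equivalence, which a priori gives an inverse only up to homotopy, not a strict dg-functor. I would address this by using the standard fact (recorded e.g. in \cite{BK,BV}) that for $\eD$ pretriangulated one has a strict dg-functor $\mathrm{Tot}\co\eD^\pt\to\eD$ forming the total complex of a twisted complex, which is a strict retraction of $\iota_\eD$: $\mathrm{Tot}\circ\iota_\eD=\mathrm{id}_\eD$. (If the paper's definition of pretriangulated is taken to already supply such a $\mathrm{Tot}$, this is immediate; otherwise one invokes that $\eD$, being pretriangulated, is closed under the cone and shift operations of Remark \ref{conerem}, and $\mathrm{Tot}$ is built by iterating these.) Setting $\Psi(G)=\mathrm{Tot}_\eD\circ G^\pt$ gives the candidate inverse.

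\medskip

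Checking $\Phi\circ\Psi=\mathrm{id}$: $\Phi(\Psi(G))=\mathrm{Tot}_\eD\circ G^\pt\circ\iota_\eC=\mathrm{Tot}_\eD\circ\iota_\eD\circ G=G$ since $\mathrm{Tot}_\eD\circ\iota_\eD=\mathrm{id}_\eD$. Checking $\Psi\circ\Phi=\mathrm{id}$: given $F\co\eC^\pt\to\eD$, I must show $\mathrm{Tot}_\eD\circ(F\circ\iota_\eC)^\pt=F$. The point is that $(F\circ\iota_\eC)^\pt=F^\pt\circ\iota_\eC^\pt$ where $\iota_\eC^\pt\co(\eC^\pt)^{} \to(\eC^\pt)^\pt$... more cleanly: any dg-functor $F\co\eC^\pt\to\eD$ is determined by its restriction to $\eC$ together with compatibility with the twisted-complex operations, because every object of $\eC^\pt$ is an iterated cone of objects of $\eC$ (Remark \ref{conerem}) and $F$, being a dg-functor, commutes with the formation of $C(f)=(x\oplus y[-1],p)$ on the nose. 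Concretely, $F$ of a twisted complex $(\bigoplus x_i[r_i],p)$ must equal the corresponding twisted complex $(\bigoplus F(x_i)[r_i],F(p))$ \emph{in $\eD$}, which is exactly $\mathrm{Tot}_\eD(F^\pt\iota_\eC$-image$)$. So $F=\mathrm{Tot}_\eD\circ(F|_\eC)^\pt$, i.e. $\Psi(\Phi(F))=F$. Naturality of $\Phi$ (hence of $\Psi$) in both $\eC$ and $\eD$ is then routine: it amounts to $\iota$ being natural and $(-)^\pt$, $\mathrm{Tot}$ being functorial, which follows from their explicit formulas.

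\medskip

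\emph{Part 2: $(-)^\pt$ preserves quasi-equivalences.} Let $f\co\eC\xto{\sim}\eD$ be a quasi-equivalence, meaning $f$ is quasi-fully-faithful (for all $x,y\in\eC$, $f\co\Hom_\eC(x,y)\to\Hom_\eD(fx,fy)$ is a quasi-isomorphism of complexes) and surjective on isomorphism classes in homology. I will show $f^\pt\co\eC^\pt\to\eD^\pt$ has both properties. For quasi-full-faithfulness: given two twisted complexes $x=(\bigoplus_{i=1}^m x_i[r_i],p)$, $y=(\bigoplus_{j=1}^n y_j[s_j],q)$ in $\eC^\pt$, the complex $\Hom_{\eC^\pt}(x,y)$ is, as a graded vector space, $\bigoplus_{i,j}\Hom_\eC(x_i,y_j)[s_j-r_i]$ with a differential that is the sum of the internal differential and the "twist" terms coming from $p,q$. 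This is a finite filtered complex whose associated graded (filtering by, say, the index $i$ or by $\sum r_i$) is $\bigoplus_{i,j}\Hom_\eC(x_i,y_j)[s_j-r_i]$ with only the internal differential. The map $f^\pt$ on $\Hom$-complexes respects this filtration and on associated graded pieces is a direct sum of shifts of the maps $\Hom_\eC(x_i,y_j)\to\Hom_\eD(fx_i,fy_j)$, each a quasi-isomorphism by hypothesis; hence $f^\pt$ induces an isomorphism on the $E_1$-page of the associated spectral sequences, which (the filtration being finite) converge, so $f^\pt$ is a quasi-isomorphism on $\Hom$-complexes. For essential surjectivity up to homotopy: given a twisted complex $z=(\bigoplus_j z_j[s_j],q)$ in $\eD^\pt$, use that $f$ is surjective on iso-classes in homology to pick $w_j\in\eC$ with $f(w_j)$ isomorphic to $z_j$ in $\Ho(\eD)$; then, since $f$ is quasi-fully-faithful, transport the twisting matrix $q$ through these homology isomorphisms to a closed degree-one matrix $p$ over the $w_j$ with $dp+p^2=0$ (this uses that the closedness/Maurer–Cartan condition lives in cohomology, where $f$ is an isomorphism), giving a twisted complex $w=(\bigoplus_j w_j[s_j],p)\in\eC^\pt$ with $f^\pt(w)\cong z$ in $\Ho(\eD^\pt)$. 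The main obstacle is exactly this last transport step: lifting the Maurer–Cartan element $q$ through $f$ up to the needed homotopies. I would handle it by induction on $n$ via the cone description of Remark \ref{conerem}: a twisted complex on $n$ objects is a cone on a closed degree-zero map from a twisted complex on $n-1$ objects to $z_n[s_n]$ (or a shift thereof); by induction $f^\pt$ hits the $(n-1)$-object piece up to homotopy equivalence, the connecting map lifts through $\Ho(f)$ because $f$ is quasi-fully-faithful, and cones of homotopic-after-$f$ maps agree in $\Ho(\eD^\pt)$ — so $z$ lies in the image of $\Ho(f^\pt)$. This inductive cone argument is the technical heart; everything else is bookkeeping with the explicit formulas of Definition \ref{ptdef}.
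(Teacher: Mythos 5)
The paper gives no proof of this proposition at all --- it is recorded as well-known with a pointer to \cite[\S 1.5]{BV} --- so any complete argument is necessarily your own. Your Part 2 is essentially the standard and correct argument: upper-triangularity of one-sided twisted complexes gives a finite filtration of $\Hom_{\eC^\pt}(x,y)$ whose associated graded carries only the internal differential, so quasi-full-faithfulness follows by comparing convergent spectral sequences, and essential surjectivity of $\Ho(f^\pt)$ follows by induction on the length of a twisted complex via the iterated-cone description of Remark \ref{conerem}, using the quasi-full-faithfulness already established to lift the connecting maps. That half is sound.

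Part 1, however, has a genuine gap. You elect to prove a \emph{strict} adjunction of sets of dg functors, and your verification of $\Psi\circ\Phi=\mathrm{id}$ rests on the claim that a dg functor $F\co\eC^\pt\to\eD$ ``commutes with the formation of $C(f)=(x\oplus y[-1],p)$ on the nose'' and is therefore determined by $F|_{\eC}$. This is false: a dg functor is merely a functor compatible with differentials, and nothing forces $F$ to send the formal object $(\bigoplus_i x_i[r_i],p)$ to the totalization of $(\bigoplus_i F(x_i)[r_i],F(p))$; already for $\eD=\eC^\pt$ one can build a dg functor fixing $\eC$ that relabels dg-isomorphic twisted complexes, so $\Psi(\Phi(F))\neq F$. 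Relatedly, the paper's definition of pretriangulated only says $\Ho(\eD)\to\Ho(\eD^\pt)$ is an equivalence, which produces your $\mathrm{Tot}$ only up to homotopy, not the strict retraction $\Psi$ needs. The adjunction is true --- and is what \cite{BV} actually proves --- only homotopically: restriction along $\iota_\eC$ is a bijection on homotopy classes of functors (equivalently, a quasi-equivalence of functor dg categories), with your $\Phi$ and $\Psi$ inverse to one another up to natural quasi-isomorphism. That weaker form is all the paper ever uses (inducing $\tilde f_g^{\pt}$ in Theorem \ref{mcgthm} and transporting quasi-equivalences), so the fix is to restate and prove Part 1 in $\Hqe$ rather than at the level of strict functor sets.
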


The category $\Hqe$ is a localization of $\dgcat$ in which
quasi-equivalences between dg categories are isomorphisms.  The Morita
homotopy category $\Hmo$ is a localization of the homotopy category $\Hqe$
of dg categories in which derived equivalences are isomorphisms. In $\Hmo$,
the homotopy idempotent completion $\eC^\perf$ of the pretriangulated hull
$\eC^\pt$ is fibrant replacement, see \cite{TabAdd}.

\subsection{Inverting maps in dg categories}\label{tolocsec}
This section contains a brief review of the localization construction for dg
categories. Many authors have studied this problem, see \cite{D,Kellerder,
  Kellerdg, TabuadaLoc} and \cite[\S 8.2]{T}.

\begin{definition}\label{intcatdef}
The symbol $I$ will be used to denote the dg category freely generated
by a cycle $f : 1\to 2$ of degree $0$ and $I'$ will be used to denote the dg category freely generated by cycles $f : 1\to 2$ and $g : 2\to 1$ of degree $0$.
$$I = 1 \xrightarrow{f} 2 \conj{ and } I' = 1 \rightleftarrows 2$$ 
The symbol $\bar{I}$ denotes the dg category with a unique degree zero isomorphism
$f : 1\xto{\sim} 2$ with $df = 0$. There are canonical inclusions:
$$\kappa : I \hookrightarrow \bar{I} \conj{ and } \kappa' : I' \hookrightarrow \bar{I}.$$
These maps are determined by the assignments $\kappa(f) = f$, $\kappa'(f) = f$ and $\kappa'(g) = f^{-1}$.
\end{definition}

\begin{definition}\label{toenlocdef}
Suppose that $\eC$ is a dg category and $R : \coprod_{r\in \eR} I \to \eC$ is a dg functor. Then the {\em localization of $\eC$ with respect to $R$} is a dg functor:
$$P : \eC \to L_R\eC$$
which satisfies properties (1) and (2) below.
\begin{enumerate}
\item The pullback map $P^* : \Hom_{\Hqe}(L_R\eC, \eX) \to \Hom_{\Hqe}(\eC,\eX)$ is injective.
\item The image of the map $P^*$ consists of maps $f : \eC \to \eX$ for which there is a map $\a$ making the diagram below commute.
$$\begin{tikzpicture}[scale=10, node distance=2.5cm]
\node (A1) {$\coprod_{r\in \eR} Ho(I)$};
\node (B1) [right=2.5cm of A1] {$\Ho(\eX)$};
\node (C1) [below=1cm of A1] {$\coprod_{r\in \eR} \Ho(\bar{I})$};
\draw[->] (A1) to node {$\Ho(R^* f)$} (B1);
\draw[->] (A1) to node [swap] {$\Ho(\kappa)$} (C1);
\draw[->,dashed] (C1) to node [swap] {$\a$} (B1);
\end{tikzpicture}$$
\end{enumerate}
The image $\im(P^*)$ may be denoted by $\Hom_\Hqe^I(\eC,\eX)$.
\end{definition}

Corollary 8.8 in \cite{T} shows that for any dg category $\eC$ and any functor
$R : \coprod_{r\in \eR} I \to \eC$, there exists a functor
$P : \eC \to L_R\eC$ in the homotopy category $\Hqe$ of dg categories which
satisfies the two properties in Definition \ref{toenlocdef} above. The
functor $P : \eC \to L_R\eC$ is defined to be the homotopy pushout:
\begin{center}
\begin{tikzpicture}[scale=10, node distance=2.5cm]
\node (A1) {$\coprod_{r\in \eR} I$};
\node (B1) [right of=A1] {$\eC$};
\node (C1) [below of=A1] {$\coprod_{r\in \eR} \bar{I}$};
\node (D1) [right of=C1] {$L_R\eC.$};
\draw[->] (A1) to node {$R$} (B1);
\draw[->] (B1) to node {$P$} (D1);
\draw[->] (A1) to node [swap] {$\kappa$} (C1);
\draw[->] (C1) to node {} (D1);
\end{tikzpicture}
\end{center}

When the category $\eC$ is cofibrant, this homotopy pushout
$$L_R \eC = \underset{r\in\eR}{\amalg} \bar{I} \coprod_{R}^{\mathbb{L}} \eC$$ 
can be computed by replacing the inclusion $\kappa : I \hookrightarrow \bar{I}$ by a well-known cofibration $I \hookrightarrow \dr$. The dg category $\dr$ appears in Drinfeld where it is denoted by $\aK$ \cite[\S 3.7.1]{D}.

\begin{definition}\label{Kdef}
  The category $\dr$ has two objects: $1$ and $2$. Its maps are generated by the elements:
  $f\in\Hom_{\dr}^0(1,2)$, $g\in\Hom_{\dr}^0(2,1)$,
  $h_{1,1}\in\Hom_{\dr}^{-1}(1,1)$,
  $h_{2,2}\in\Hom_{\dr}^{-1}(2,2), h_{1,2}\in\Hom_{\dr}^{-2}(1,2)$:
\begin{center}
\begin{tikzpicture}[->,>=stealth',shorten >=1pt,auto,node distance=3cm]

  \node (1) {$1$};
  \node (2) [right of=1] {$2$};

  \path
    (1) edge [loop left] node {$h_{1,1}$} (1)
        edge [bend left] node  {$f,h_{1,2}$} (2);
  \path
    (2) edge [loop right] node {$h_{2,2}.$} (2)
        edge [bend left] node  {$g$} (1);
\end{tikzpicture}
\end{center}
The differential is determined by the Leibniz rule together with the equations:
$$df=0,\quad dg=0,\quad dh_{1,1} = gf - 1_1,\quad dh_{2,2} = fg - 1_2\quad \normaltext{ and }\quad dh_{1,2} = h_{2,2}f - f h_{1,1}.$$
and the maps are subject to no relations.
\end{definition}

\begin{remark}
In Definition \ref{toenlocdef}, the category $I$ and the map $\kappa : I \hookrightarrow \bar{I}$ can be replaced by the category $I'$ and the map $\kappa' : I' \hookrightarrow \bar{I}$.
Suppose that $R : I \to \eC$ and a candidate $R(f)^{-1}$ for the inverse of the map $R(f)$ already exists in the category $\eC$. Then $R$ can be extended to a functor $R' : I' \to \eC$ so that $R'(f) = R(f)$ and $R'(g) = R(f)^{-1}$ and there is an analogous localization:
$$ P : \eC \to L_{R'}\eC \conj{ where } L_{R'}\eC =  \dr \coprod^{\LL}_{R'} \eC.$$
\end{remark}

\subsection{Postnikov localization}\label{postsec}
A variation of the localization procedure discussed in the previous section
is introduced. This {\em Postnikov localization} introduces distinguished
triangles rather than homotopy equivalences. In particular, given a sequence
$$1 \to 2\to 3 \to 1$$
of maps $S$ in a dg category $\eC$, there is a dg category $L_S\eC$ in which
this sequence forms a distinguished triangle. 

The dg categories considered in this section are $\ZZ/2$-graded for
simplicity.  The the equivalences discussed below commute with the forgetful
functor to the ungraded setting introduced in Section \ref{grading}. On the
other hand, $\ZZ$-graded lifts determined by grading conventions for distinguished triangles can be found in \cite[\S 2.4.1]{DK}. See also \cite[\S 4.3]{Toen}.

Historically, Postnikov systems appear in the study of triangulated
categories \cite{GM}.  The name Postnikov may be attached to that
construction because it is a generalization of the Postnikov decomposition
of topological spaces to algebraic triangulated categories.

First we introduce a dg category $\Tp$ which corepresents triangles, see
Equation \eqref{trieq1} and Proposition \ref{symtriprop}. Then Definition
\ref{didef} introduces dg categories $\bar{D}$ and $\cdtri$ which
corepresent distinguished triangles. A dg functor
$\kappa : \Tp \hookrightarrow \cdtri$ will be used to construct the
Postnikov localization in Definition \ref{anlocdef}.

\begin{defn}\label{Ttridef}
  The symbol $\Tp$ will be used to denote the dg category freely generated by
  cycles: $\OT : 1\to 2$, $\TT : 2\to 3$ and $\TO : 3\to 1$.
$$\begin{tikzpicture}[scale=10, node distance=2.5cm]
\node (A1) {$1$};
\node (X) [right=1.25cm of A1] {};
\node (B1) [right=1.25cm of X] {$2$};
\node (C1) [below=1.25cm of X] {$3$};
\draw[->] (A1) to node {$\OT$} (B1);
\draw[->] (C1) to node  {$\TO$} (A1);
\draw[->] (B1) to node  {$\TT$} (C1);
\end{tikzpicture}$$
The degrees are determined by $\vnp{\OT}=1$,$\vnp{\TT} = 1$ and $\vnp{\TO} = 1$.
\end{defn}

Since a dg functor $f : \Tp \to \eC$ is uniquely determined by where it maps
the generators in the definition above, there is a bijection between the set
of such functors and (symmetric) triangles in $\eC$.
\begin{equation}\label{trieq1}
\Hom_{\dgcat}(\Tp,\eC)  \xto{\sim} \{\normaltext{ symmetric triangles in $\eC$ } \}
\end{equation}

\begin{defn}
  If $f, g : \Tp \to \eC$ are two triangles in $\eC$ then {\em $f$ is
    isomorphic to $g$} when $\Ho(f) \cong \Ho(g)$ as objects in the functor
  category $\Hom(\Ho(\Tp),\Ho(\eC))$.
\end{defn}

The proposition below states that in the homotopy category $\Hqe$ of dg
categories the lefthand side of Equation \eqref{trieq1} above is in
canonical bijection with isomorphism classes of triangles.

\begin{prop}{(\cite[Prop. 2.4.7]{DK})}\label{symtriprop}
  For any dg category $\eC$, there is a one-to-one correspondence between
  homotopy classes of functors $f : \Tp \to \eC$ and isomorphism classes of
  triangles in $\eC$:
$$\Hom_{\Hqe}(\Tp,\eC)  \leftrightarrow \{\normaltext{ symmetric triangles in $\eC$ } \}/iso.$$
\end{prop}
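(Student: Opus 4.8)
The plan is to prove the correspondence by exhibiting mutually inverse maps and checking they are well-defined on the level of homotopy classes. Going forward, I would first recall from Equation \eqref{trieq} that a strict dg functor $f:\Tp\to\eC$ is the same thing as a choice of three composable cycles forming a symmetric triangle, since $\Tp$ is \emph{freely} generated by $\OT,\TT,\TO$. This already gives a surjection from $\Hom_{\dgcat}(\Tp,\eC)$ onto symmetric triangles in $\eC$. Composing with the quotient $\Hom_{\dgcat}(\Tp,\eC)\to\Hom_{\Hqe}(\Tp,\eC)$ and with the passage to isomorphism classes, I obtain a map $\Phi:\Hom_{\Hqe}(\Tp,\eC)\to\{\text{symmetric triangles in }\eC\}/iso$; the first task is to check $\Phi$ is well-defined, i.e.\ that homotopic functors induce isomorphic triangles. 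This follows because a homotopy (equivalently a zig-zag of quasi-equivalences relating the two functors, using that $\Tp$ is cofibrant so that $\Hom_{\Hqe}(\Tp,\eC)$ is computed by strict functors up to homotopy) induces, after applying $\Ho(-)$, an isomorphism of the associated objects $\Ho(f)\cong\Ho(g)$ in $\Hom(\Ho(\Tp),\Ho(\eC))$, which is exactly the definition of isomorphic triangles.

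Next I would construct the inverse. Given a symmetric triangle in $\eC$, choose representative cycles and define a strict functor $\Tp\to\eC$ via the universal property of the free dg category $\Tp$; this is a set-theoretic section of the surjection from the first paragraph, and composing with the quotient to $\Hom_{\Hqe}$ gives a map $\Psi$ on representatives. The content is to show $\Psi$ descends to isomorphism classes: if two symmetric triangles are isomorphic, the corresponding strict functors $f,g:\Tp\to\eC$ become homotopic in $\dgcat$. Here I would use cofibrancy of $\Tp$ together with the standard fact (cf.\ \cite[\S 2.4]{DK}, \cite{T}) that an isomorphism in $\Ho(\eC)$ between the images, compatible with the triangle structure, can be lifted to a dg-homotopy between $f$ and $g$ — concretely, one builds the homotopy by choosing lifts of the isomorphism component-by-component (objects $1,2,3$) and of the homotopies witnessing that the squares commute in $\Ho(\eC)$, exploiting that $\Tp$ is free so there are no relations to obstruct the construction. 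That $\Phi$ and $\Psi$ are mutually inverse is then formal: $\Phi\circ\Psi=\mathrm{id}$ on the nose at the level of representatives, and $\Psi\circ\Phi=\mathrm{id}$ because any homotopy class in $\Hom_{\Hqe}(\Tp,\eC)$ has a strict representative (again by cofibrancy of $\Tp$) which $\Phi$ then $\Psi$ returns up to homotopy.

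I expect the main obstacle to be the lifting step in the previous paragraph: upgrading an isomorphism of triangles in the homotopy category to an honest dg-homotopy between the classifying functors. This is where one must be careful about what "symmetric triangle" and "isomorphic" precisely mean, and must invoke the cofibrancy of $\Tp$ (it is a free, hence cofibrant, dg category) so that mapping out of it is homotopically well-behaved and homotopies can be constructed by hand on generators. Since the statement is attributed to \cite[Prop.\ 2.4.7]{DK}, I would in practice cite that result for this lifting and only sketch the argument; the remaining bookkeeping — that the two constructions are inverse, and that everything is natural enough to pass to isomorphism classes — is routine. I would also remark, per the paragraph preceding the proposition in the excerpt, that all of this is compatible with the forgetful functor to the ungraded setting, so no subtlety about the $\ZZ/2$-grading intervenes.
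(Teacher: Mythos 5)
The paper provides no proof of this proposition -- it is quoted directly from \cite[Prop.\ 2.4.7]{DK} -- so there is no in-text argument to compare against. Your outline reconstructs what that proof must do: use that $\Tp$ is free on a quiver with no relations to identify strict dg functors $\Tp\to\eC$ with symmetric triangles (Equation \eqref{trieq}), and then show that the quotient of $\Hom_{\dgcat}(\Tp,\eC)$ by dg-homotopy coincides with the quotient of triangles by the $\Ho$-isomorphism relation of the definition preceding the statement. You correctly isolate the one nontrivial step -- lifting a natural isomorphism $\Ho(f)\cong\Ho(g)$ in $\Hom(\Ho(\Tp),\Ho(\eC))$ to an honest dg-homotopy between $f$ and $g$, using cofibrancy of $\Tp$ -- and, like the paper, defer that lift to \cite{DK}, which is a reasonable move given the paper does the same by citation.

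One caveat worth flagging: the parenthetical glossing a homotopy of functors as ``equivalently a zig-zag of quasi-equivalences relating the two functors'' is not accurate. A zig-zag of quasi-equivalences would replace the target $\eC$; what actually computes $\Hom_{\Hqe}(\Tp,\eC)$, since $\Tp$ is cofibrant and every dg category is fibrant in the Tabuada model structure, is the model-categorical left/right homotopy relation on strict functors $\Tp\to\eC$. Equivalently one can invoke To\"en's identification of $\Hom_{\Hqe}(\Tp,\eC)$ with $\pi_0$ of the mapping space, which for a free source like $\Tp$ decomposes cell-by-cell over the three objects and three generating morphisms. This is a phrasing slip rather than a gap in the strategy, but it matters because the equivalence relation being imposed is exactly what your hard lifting step must produce, so it should be stated precisely.
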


Just as isomorphisms are distinguished types of maps, distinguished
triangles are distinguished types of triangles. A distinguished triangle is
a recipe for constructing one of its objects in terms of the other two.

\begin{defn}\label{disttridef}
If $S$ is a symmetric triangle $1 \xto{\OT} 2 \xto{\TT} 3 \xto{\TO} 1$ in a dg category $\eC$ then $S$ is a {\em distinguished triangle} if and only if $S$ is isomorphic to the distinguished triangle $S'$ given by $1 \xto{\OT} 2 \to C(\OT) \to 1$ in the homotopy category of $\eC^\pt$.
\end{defn}

In keeping with Section \ref{tolocsec}, the distinguished property
of triangles is formulated as a lifting problem. An innocuous looking dg
category $\DI$ which corepresents distinguished triangles and a
quasi-equivalent cofibrant replacement $\cDI\xto{\sim} \DI$ are introduced below.

\begin{defn}{(\cite[\S 2.4.1]{DK})}\label{didef}
  The dg category $\DI$ consists of objects $\Obj(\DI)= \{1,2,3\}$. The maps are generated by
  cycles: $\OT : 1\to 2$, $\TT : 2\to 3$ and $\TO : 3\to 1$, of degree $1$ and homotopies $h_{2,1} : 2\to 1$, $h_{3,2} : 3\to 2$ and $h_{1,3} : 1 \to 3$ of degree $1$
$$\begin{tikzpicture}[scale=10, node distance=2.5cm]
\node (A1) {$1$};
\node (X) [right=2.5cm of A1] {};
\node (B1) [right=2.5cm of X] {$2$};
\node (C1) [below=2.5cm of X] {$3$};
\draw[->,bend left=10] (A1) to node {$\OT$} (B1);
\draw[->,bend left=10] (C1) to node  {$\TO$} (A1);
\draw[->,bend left=10] (B1) to node  {$\TT$} (C1);
\draw[->,bend left=10] (B1) to node {$h_{2,1}$} (A1);
\draw[->,bend left=10] (A1) to node  {$h_{1,3}$} (C1);
\draw[->,bend left=10] (C1) to node  {$h_{3,2}$} (B1);
\end{tikzpicture}$$
with $dh_{2,1} = \th_{3,1}\th_{2,3}$, $dh_{3,2} = \th_{1,2}\th_{3,1}$ and $dh_{1,3} = \th_{2,3}\th_{1,2}$
and the relations:
$$\th_{2,3} h_{3,2} + h_{1,3}\th_{3,1} = 1_3, \quad \th_{1,2} h_{2,1} + h_{3,2}\th_{2,3} = 1_2, \quad \th_{3,1} h_{1,3} + h_{2,1}\th_{1,2} = 1_1.$$

The dg category $\cDI$ consists of objects $\Obj(\cDI)= \{1,2,3\}$. The maps $\th_{i,j} : i\to j$ in this category are clockwise-oriented paths between vertices, from $i$ to $j$, in the triangular graph featured in Definition \ref{Ttridef} above.

The differential is zero on paths of length zero or one, when $\th_{i,i}$ is a cycle, a path of topological degree one (a loop), then
$$d\th_{i,i} = 1_i - \sum_k \th_{k,i}\th_{i,k}$$
otherwise $d\th_{i,j}$ is the sum over compositions of all possible factorizations of the path:
$$d\th_{i,j} =  \sum_k \th_{k,i}\th_{j,k}.$$
The projection $p :  \cDI \to \DI$ given by mapping cycles of length $1$ to their respective $\th$-maps is a quasi-equivalence \cite[Prop. 2.4.13]{DK}.
In the other direction, there is an inclusion $\kappa' : \Tp \hookrightarrow \cDI$ given by sending the $\th$-maps to their respective length $1$ cycles. There is also an inclusion $\kappa' : \Tp\hookrightarrow \DI$ given by the same formula. A $\ZZ$-graded analogue of $\cDI$ is discussed in \cite{K}. 
This dg category is the Cobar-Bar construction on the partially wrapped Fukaya category of the disk with three stops \cite{Nadler}.
\end{defn}

The proposition below states that the dg category $\cDI$ corepresents
distinguished triangles and satisfies the key properties necessary for the
localization construction.

\begin{prop}{(\cite[Prop. 2.4.14]{DK})}\label{anrelprop}
\begin{enumerate}
\item  For any dg category $\eC$ the set of homotopy classes of dg functors from
  $\cDI$ to $\eC$ is in bijection with the set of isomorphism classes of
  distinguished triangles in $\eC$:
$$\Hom_{\Hqe}(\cDI,\eC) = \{ 1\xto{\th_{1,2}} 2 \xto{\th_{2,3}} C(\th_{1,2}) \to 1 \}/iso.$$

\item The image of the pullback induced by the map $\kappa'$ appearing in Definition \ref{didef} coincides with the subset of triangles which are distinguished:
$$(\kappa')^* : \Hom_{\Hqe}(\cDI, \eC) \to \Hom_{\Hqe}(\Tp, \eC).$$

\item The set $\Hom_{\Hqe}(\cDI,\eC)$ is equal to the set of maps $f \in \Hom_{\Hqe}(\Tp,\eC)$ for which there is a map $\a : \Ho(\cDI) \to \Ho(\eC)$ such that $\Ho(f) = \a \circ \Ho(\kappa')$.
\end{enumerate}
\end{prop}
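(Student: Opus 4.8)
The plan is to prove Proposition \ref{anrelprop} by unwinding the definitions of $\DI$ and $\cDI$ and reducing everything to the already-established pretriangulated-hull machinery of Remark \ref{conerem} and Definition \ref{disttridef}. First I would verify that the projection $p : \cDI \to \DI$ is a quasi-equivalence, as cited from \cite[Prop. 2.4.13]{DK}; concretely, on hom-complexes one checks that the contracting homotopies built from the $\th_{i,i}$ cycles kill all the higher paths, so that $H^\bullet \Hom_{\cDI}(i,j) \cong H^\bullet \Hom_{\DI}(i,j)$, and $p$ is bijective on objects. Consequently $\Hom_{\Hqe}(\cDI,\eC) \cong \Hom_{\Hqe}(\DI,\eC)$ and it suffices to work with $\DI$. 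Next, for part (1), I would observe that a dg functor $\DI \to \eC$ is exactly a choice of three objects $x_1,x_2,x_3$, three cycles $\OT,\TT,\TO$, and three homotopies $h_{2,1},h_{3,2},h_{1,3}$ satisfying the stated relations; the relations $\th_{2,3}h_{3,2}+h_{1,3}\th_{3,1}=1_3$ etc. are precisely the data exhibiting $(x_1 \xto{\OT} x_2 \xto{\TT} x_3 \xto{\TO} x_1)$ as homotopy-equivalent, inside $\eC^\pt$, to the standard triangle $x_1 \xto{\OT} x_2 \to C(\OT) \to x_1$ of Remark \ref{conerem}. Passing to homotopy classes of functors and using Proposition \ref{symtriprop} to identify homotopy classes of maps out of $\Tp$ with iso-classes of symmetric triangles, one gets that $\Hom_{\Hqe}(\cDI,\eC)$ is the set of those triangle iso-classes which are distinguished in the sense of Definition \ref{disttridef}, which is the claimed description.

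For part (2), the map $(\kappa')^*$ is the composite of the restriction $\Hom_{\Hqe}(\cDI,\eC)\to\Hom_{\Hqe}(\DI,\eC)$ (an iso) with restriction along $\kappa':\Tp\hookrightarrow\DI$; under the bijections of part (1) and Proposition \ref{symtriprop} this is simply the inclusion of distinguished triangles into all symmetric triangles, so its image is exactly the distinguished ones. I would spell out that $(\kappa')^*$ is injective by noting that a functor $\DI\to\eC$ is determined up to homotopy by its underlying triangle together with the homotopies, but any two choices of nullhomotopy data filling in a given distinguished triangle are themselves homotopic (this is where one uses that the contractibility of the relevant hom-complexes in $\cDI$ forces uniqueness of the extension up to homotopy). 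Part (3) is then a reformulation: a symmetric triangle $f:\Tp\to\eC$ lies in the image of $(\kappa')^*$ iff it extends to $\cDI$ up to homotopy, and the functoriality of $\Ho(-)$ together with the quasi-equivalence $p$ lets one phrase ``extends up to homotopy'' as the existence of $\a:\Ho(\cDI)\to\Ho(\eC)$ with $\Ho(f)=\a\circ\Ho(\kappa')$; here I would invoke that for the cofibrant category $\cDI$, $\Hom_{\Hqe}(\cDI,\eC)$ computes as genuine homotopy classes of dg functors, so a lift on homotopy categories suffices to produce an actual lift.

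I expect the main obstacle to be part (3), specifically the passage from a lift $\a$ defined only on the homotopy categories to an honest dg-functor lift $\cDI\to\eC$ up to homotopy. The issue is that $\Ho(\cDI)$ retains very little of the homotopy-coherent structure of $\cDI$, so producing $\a$ is weak data; one must use cofibrancy of $\cDI$ (hence that $\Hom_{\Hqe}(\cDI,\eC)$ equals $[\cDI,\eC]$ in $\dgcat$) plus an obstruction-theoretic argument showing that the potential obstructions to lifting live in the cohomology of the hom-complexes of $\cDI$, which vanish above degree zero by the computation behind Proposition \ref{anrelprop}. I would either carry this out directly, building the lift cell-by-cell over the generators $f,g,h_{1,1},h_{2,2},h_{1,2}$-analogues $\th_{i,j}$ of $\cDI$, or — more cleanly — cite \cite[Prop. 2.4.14]{DK} for exactly this point and present only the translation of their statement into the present notation. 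The remaining verifications (the relation-checking in part (1), the injectivity in part (2)) are essentially bookkeeping once the dictionary with Remark \ref{conerem} and Definition \ref{disttridef} is set up.
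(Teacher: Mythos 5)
The paper supplies no proof of Proposition~\ref{anrelprop}: the result is cited directly from \cite[Prop.\ 2.4.14]{DK} and used as a black box, so there is no in-paper argument to check your proposal against. Your outline is a plausible reconstruction of the argument in Dyckerhoff--Kapranov, and you correctly identify the genuinely non-trivial step, namely that part~(3) asks one to promote a factorization $\Ho(f) = \a \circ \Ho(\kappa')$, which is purely $1$-categorical data on homotopy categories, to an honest lift $\cDI \to \eC$ in $\Hqe$; the obstruction lives in the cohomology of the hom-complexes of $\cDI$. Since you say you would in the end cite \cite[Prop.\ 2.4.14]{DK} for this lifting step, your treatment ends up no less complete than the paper's, which cites the same result for all three parts.

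One caution about the internal logic of your sketch: the dichotomy you draw between parts (1)--(2) as ``essentially bookkeeping'' and part (3) as ``the real obstacle'' is not clean. The bijection asserted in part~(1), and in particular the injectivity of $(\kappa')^{*} : \Hom_{\Hqe}(\cDI,\eC) \to \Hom_{\Hqe}(\Tp,\eC)$ that you need in part~(2), rest on exactly the same vanishing of higher cohomology in the hom-complexes of $\cDI$ that controls the lifting in~(3). Without it, a distinguished triangle $f : \Tp \to \eC$ could admit homotopy-inequivalent extensions to $\cDI$ (different, non-homotopic choices of filling data $\th_{i,i}$, $\th_{i,j}$ for the higher paths), and the set $\Hom_{\Hqe}(\cDI,\eC)$ would overcount iso-classes of distinguished triangles, breaking~(1). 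So the cohomological input should be presented as underpinning all three parts rather than isolated to~(3); once that computation is in hand, the remaining translation to the dictionary of Remark~\ref{conerem} and Definition~\ref{disttridef} is the bookkeeping you describe.
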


We are now ready to discuss a generalization of the localization procedure
presented earlier in Section \ref{tolocsec}. Instead of inverting maps in
the associated homotopy category, this new operation creates distinguished
triangles in the associated homotopy category.

\begin{definition}\label{anlocdef}
  Suppose that $\eC$ is a dg category and
  $S : \coprod_{s\in \eS} \Tp \to \eC$ is a dg functor. Then the {\em Postnikov localization of $\eC$ with respect to $S$} is a dg functor:
$$Q : \eC \to L_S\eC$$
such that for any dg category $\eX$ the following properties are satisfied.
\begin{enumerate}
\item The pullback map $Q^* : \Hom_{\Hqe}(L_S\eC, \eX) \to \Hom_{\Hqe}(\eC,\eX)$ is injective and
\item The set of maps $\Hom_{\Hqe}(L_S\eC, \eX)$ in the image of $Q^*$ is equal to the set of maps $f \in \Hom_{\Hqe}(\eC,\eX)$ such that there is a map $\a$ making the diagram below commute.
$$\begin{tikzpicture}[scale=10, node distance=2.5cm]
\node (A1) {$\coprod_{s\in \eS} Ho(\Tp)$};
\node (B1) [right=2.5cm of A1] {$\Ho(\eX)$};
\node (C1) [below=1cm of A1] {$\coprod_{s\in \eS} \Ho(\cDI)$};
\draw[->] (A1) to node {$\Ho(f\circ S)$} (B1);
\draw[->] (A1) to node [swap] {$\kappa'$} (C1);
\draw[dashed,->] (C1) to node [swap] {$\a$} (B1);
\end{tikzpicture}$$
The image $\im(Q^*)$ may also be denoted by $\Hom_{\Hqe}^T(\eC,\eX)$.
\end{enumerate}
\end{definition}

Recall from above that a functor from $ S : \Tp \to \eC$ is determined by the
choice of cycles $f : 1 \to 2$, $g : 2 \to 3$ and $h : 3 \to 1$. The
Postnikov localization $L_S\eC$ associated to the functor $S$ requires that
the sequence:
$$1\xto{f} 2\xto{g} 3 \xto{h} 1$$
is a distinguished triangle in the sense of Definition \ref{disttridef}. The category
$L_S\eC$ is uniquely determined up to homotopy by the property that a
functor $f : \eC\to \eX$ factors through $Q : \eC \to L_S\eC$ in $\Hqe$ when
it maps triangles in the image of $S$ to distinguished triangles in the homotopy
category $\Ho(\eX)$ of $\eX$.

When $\eC$ is a cofibrant dg category, the category $L_S\eC$ is a pushout,
obtained by gluing a copy of $\cDI$ along the subcategory determined by the
image of a functor $S$.  If $\eC$ is not cofibrant then $L_S\eC$ is a
homotopy pushout: the pushout of a cofibrant replacement
$\widetilde{\eC}\xto{\sim}\eC$ of $\eC$ \cite[\S A.2.4.4]{LT}.

The next proposition states that Postnikov localizations always exist.

\begin{prop}\label{anlocexprop}
  For any dg category $\eC$ and any collection
  $S : \coprod_{s\in \eS} \Tp \to \eC$, there is a Postnikov localization
  $Q : \eC \to L_S\eC$ in $\Hqe$.
\end{prop}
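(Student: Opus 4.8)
The plan is to produce $L_S\eC$ as a homotopy pushout, running Toën's argument for \cite[Cor.~8.8]{T} recalled in Section~\ref{tolocsec} with the inclusion $\kappa' : \Tp \hookrightarrow \cDI$ of Definition~\ref{didef} playing the role of $\kappa : I \hookrightarrow \bar I$. Thus $L_S\eC$ will be defined, up to quasi-equivalence, by gluing a copy of $\coprod_{s\in\eS}\cDI$ onto $\eC$ along the triangles picked out by $S$, and $Q$ will be the resulting structure map.

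First I would pass to a cofibrant model $\widetilde{\eC}\xrightarrow{\sim}\eC$ in Tabuada's model structure on $\dgcat$; since $\Hqe$ inverts quasi-equivalences this is harmless. Then I would record two facts. First, $\coprod_{s\in\eS}\Tp$ is cofibrant, being a free (hence semifree) dg category and coproducts preserving cofibrancy; consequently $S$ lifts through the trivial fibration $\widetilde{\eC}\to\eC$ to a functor $\widetilde{S}:\coprod_s\Tp\to\widetilde{\eC}$. Second, $\kappa':\Tp\hookrightarrow\cDI$ is a cofibration: the presentation of $\cDI$ in Definition~\ref{didef} exhibits it as $\Tp$ with the higher path-generators adjoined freely, their differentials strictly lowering the path-length filtration, i.e.\ as a (transfinite) semifree extension of $\Tp$; since cofibrations are stable under coproducts, $\coprod_s\kappa'$ is a cofibration between cofibrant objects. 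Hence the ordinary pushout
$$L_S\eC \;:=\; \Big(\coprod_{s\in\eS}\cDI\Big)\coprod_{\coprod_s\Tp}\widetilde{\eC}$$
along $\coprod_s\kappa'$ and $\widetilde{S}$ is a homotopy pushout, and $Q:\eC\xleftarrow{\sim}\widetilde{\eC}\to L_S\eC$ is the sought functor.

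It then remains to verify properties (1) and (2) of Definition~\ref{anlocdef}, and for this I would use that mapping spaces out of a homotopy pushout compute homotopy pullbacks: for every dg category $\eX$,
$$\mathrm{Map}_{\Hqe}(L_S\eC,\eX)\;\simeq\;\mathrm{Map}_{\Hqe}\Big(\coprod_s\cDI,\eX\Big)\times^{h}_{\mathrm{Map}_{\Hqe}(\coprod_s\Tp,\eX)}\mathrm{Map}_{\Hqe}(\eC,\eX).$$
By Proposition~\ref{anrelprop}(2)--(3) (following \cite[\S2.4.1]{DK}), the restriction $\mathrm{Map}_{\Hqe}(\coprod_s\cDI,\eX)\to\mathrm{Map}_{\Hqe}(\coprod_s\Tp,\eX)$ is a homotopy monomorphism (an inclusion of connected components) whose image consists exactly of the collections of distinguished triangles; pulling this back along $\mathrm{Map}_{\Hqe}(\eC,\eX)\to\mathrm{Map}_{\Hqe}(\coprod_s\Tp,\eX)$ shows that $Q^*$ is itself a homotopy monomorphism, onto the components of $\mathrm{Map}_{\Hqe}(\eC,\eX)$ consisting of those $f$ for which $f\circ S$ is (componentwise) a distinguished triangle. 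Taking $\pi_0$ yields injectivity of $Q^*$ — property (1) — and identifies $\im(Q^*)$ with the set of $f:\eC\to\eX$ admitting a lift $\alpha:\Ho(\coprod_s\cDI)\to\Ho(\eX)$ with $\Ho(f\circ S)=\alpha\circ\kappa'$, which is property (2) by Proposition~\ref{anrelprop}(3).

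The step I expect to be the real obstacle is showing that $\kappa':\Tp\hookrightarrow\cDI$ is a cofibration between cofibrant objects — i.e.\ matching the explicit generators-and-relations description of $\cDI$ from \cite[\S2.4.1]{DK} against the semifree-extension criterion for cofibrancy in $\dgcat$. Once that is in place, the construction of $L_S\eC$ and the verification of its universal property are formal consequences of the theory of homotopy pushouts together with Proposition~\ref{anrelprop}.
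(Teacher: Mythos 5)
Your argument is correct and coincides with the paper's: both define $L_S\eC$ as the homotopy pushout of $\coprod_{s\in\eS}\cDI \leftarrow \coprod_{s\in\eS}\Tp \xrightarrow{S} \eC$ and deduce the universal property of Definition~\ref{anlocdef} from Proposition~\ref{anrelprop} together with the formal behavior of mapping spaces under homotopy pushouts. You have simply filled in what the paper compresses into ``follows from Proposition~\ref{anrelprop} and properties of homotopy pushouts'' — checking that $\kappa':\Tp\hookrightarrow\cDI$ is a cofibration between cofibrants, and spelling out the homotopy-pullback of mapping spaces; one small caveat is that the claim that $(\kappa')^*$ is an inclusion of connected components is slightly stronger than the $\pi_0$-level statement of Proposition~\ref{anrelprop} as printed, though it is precisely what \cite[\S 2.4.1]{DK} establishes and what the paper's own proof relies on implicitly.
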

\begin{proof}
  It follows from Proposition \ref{anrelprop} that the functor
  $\kappa' : \Tp \to \cDI$ is a Postnikov localization in the sense of
  Definition \ref{anlocdef}. Therefore, any coproduct of inclusions:
  $\coprod_{s\in \eS} \Tp \to \coprod_{s\in\eS} \cDI$, is an Postnikov localization.  For any
  dg category $\eC$, the localization $Q : \eC \to L_S\eC$ is given by the
  homotopy pushout:
\begin{center}
\begin{tikzpicture}[scale=10, node distance=2.5cm]
\node (A1) {$\coprod_{s\in \eS} \Tp$};
\node (B1) [right of=A1] {$\eC$};
\node (C1) [below of=A1] {$\coprod_{s\in \eS} \cDI$};
\node (D1) [right of=C1] {$L_S\eC$};
\draw[->] (A1) to node {$S$} (B1);
\draw[->] (B1) to node {$Q$} (D1);
\draw[->] (A1) to node {} (C1);
\draw[->] (C1) to node {} (D1);
\end{tikzpicture}
\end{center}
That $L_S\eC$ is a Postnikov localization follows Proposition \ref{anlocdef}
and properties of homotopy pushouts \cite{Hovey}.
\end{proof}

\subsection{Properties of Postnikov localization}\label{proppostsec}

In this section we explore properties of the Postnikov localization
procedure, establish a relationship between it and the ordinary localization of dg
categories, and introduce an analogue of Heller's lemma which facilitates the
computation of additive invariants such as the Grothendieck group.

\subsubsection{Triangle insertion}\label{insdelsec}

The appendix  \S\ref{dgcatsec} reviews relevant concepts such quasi-fully faithful embedding.

The first proposition below assures us that, after having added a triangle,
it persists in the pretriangulated hull.

\begin{prop}\label{prestriprop}
Suppose that $S : \Tp \to \eC$, $Q : \eC \to L_S\eC$ and $R : L_S\eC \to \eX$ is a quasi-fully faithful embedding of the Postnikov localization of $\eC$ into a pretriangulated category $\eX$. If $f = RQS(1\to 2)$ and $c = RQS(3)$ then  $c$ is isomorphic to the cone $C(f)$ of $f$ in the homotopy category of $\eX$.
$$c \cong C(f) \conj{ in } \Ho(\eX).$$
\end{prop}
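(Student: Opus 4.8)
The plan is to reduce the statement to the defining property of Postnikov localization, together with the fact (Proposition~\ref{pretriprop}) that a pretriangulated category is insensitive to passing to its pretriangulated hull. First I would observe that since $\eX$ is pretriangulated, the inclusion $\eX \hookrightarrow \eX^\pt$ induces an equivalence $\Ho(\eX) \to \Ho(\eX^\pt)$, so it suffices to prove $c \cong C(f)$ in $\Ho(\eX^\pt)$, where $C(f)$ exists on the nose by the cone construction of Remark~\ref{conerem}. The composite $RQ : \eC \to \eX$ is, by construction, a map in $\Hqe$ through which $S$ factors, so by the characterization in Definition~\ref{anlocdef}(2) the triangle $RQS : \Tp \to \eX$ lifts (up to homotopy) along $\kappa' : \Tp \to \cDI$; that is, there is $\a : \Ho(\cDI) \to \Ho(\eX)$ with $\Ho(RQS) = \a \circ \Ho(\kappa')$. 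By Proposition~\ref{anrelprop}(1), a homotopy class of functor $\cDI \to \eX$ is precisely an isomorphism class of distinguished triangle in $\eX$; by Definition~\ref{disttridef}, this means the symmetric triangle $RQS$ is isomorphic, in $\Ho(\eX^\pt)$, to the standard triangle $1 \xto{\OT} 2 \to C(\OT) \to 1$.

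Next I would unwind what this isomorphism of triangles gives. An isomorphism in the functor category $\Hom(\Ho(\Tp), \Ho(\eX^\pt))$ between $\Ho(RQS)$ and the standard triangle on $f = RQS(\OT)$ consists, in particular, of an isomorphism on the object $3$: that is, $RQS(3) \cong C(f)$ in $\Ho(\eX^\pt)$, which is exactly $c \cong C(f)$. One subtlety to address: the isomorphism of triangles is an isomorphism of functors from $\Ho(\Tp)$, and $\Ho(\Tp)$ has the three objects $1,2,3$; the standard triangle $S'$ of Definition~\ref{disttridef} sends $\OT \mapsto f$, hence sends $1 \mapsto RQS(1)$ and $2 \mapsto RQS(2)$ as well (one can post-compose the standard triangle with these identifications without loss), so the component of the natural isomorphism at the object $3$ is the desired $c \xrightarrow{\sim} C(f)$. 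I should note that $R$ being quasi-fully faithful is not strictly needed for the existence of this isomorphism — it follows purely from $RQ$ being a $\Hqe$-map factoring $S$ — but it is reassuring in that it guarantees $\Ho(\eX)$ genuinely contains the localized picture; I would remark on this briefly rather than belabor it.

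The main obstacle, and the only place requiring care, is bookkeeping the passage between the three homotopy categories $\Ho(\eX)$, $\Ho(\eX^\pt)$, and $\Ho(\cDI)$, and making sure that "factors through $Q$ in $\Hqe$" is correctly converted into the existence of the lift $\a$ at the level of $\Ho$. Since $R$ is an honest dg functor (not merely a roof), $RQ$ is literally a morphism in $\Hqe$, and Definition~\ref{anlocdef}(2) applies with $f := RQ$; the lift $\a$ then exists by that definition applied to $\eX$ itself. Everything else is reading off components of a natural isomorphism, so the argument is short: I expect the write-up to be three or four sentences invoking Definition~\ref{anlocdef}, Proposition~\ref{anrelprop}, Definition~\ref{disttridef}, and Proposition~\ref{pretriprop} in that order.
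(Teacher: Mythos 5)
Your proof is correct, but it takes a genuinely different route from the paper's. The paper gives a short classical triangulated-category argument: having (implicitly) granted that $RQS$ is a distinguished triangle in $\Ho(\eX)$, it invokes TR3 to produce a map $(1,1,h,1)$ to the cone triangle on $f$, then applies $\Hom(x,-)$, the Five Lemma, and Yoneda to conclude that $h$ is an isomorphism. You instead unwind the definitions directly: because $R$ is a genuine $\Hqe$-morphism, $RQ$ lies in $\im(Q^*)$, so Definition~\ref{anlocdef}(2) supplies the lift $\a$; by Proposition~\ref{anrelprop}(2)--(3) this says precisely that $RQS$ factors through $\kappa'$ up to homotopy, i.e.\ is a distinguished triangle; Definition~\ref{disttridef} then exhibits a natural isomorphism from $RQS$ to the cone triangle in $\Ho(\eX^\pt)\simeq\Ho(\eX)$, whose component at the object $3$ is the desired $c\cong C(f)$. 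Your route is more self-contained, since it makes explicit \emph{why} $RQS$ is distinguished (the paper's appeal to TR3 quietly presupposes this), and it avoids the Five Lemma/Yoneda machinery by reading the conclusion straight off Definition~\ref{disttridef}. The paper's argument buys a slightly stronger output — an isomorphism $h$ that fits into a morphism of triangles restricting to the identity on the first two vertices — though that refinement is not needed for the stated conclusion. Your closing observations (that quasi-fully-faithfulness of $R$ is not actually used, and that one could normalize the natural isomorphism on objects $1,2$) are both accurate, and the second one could in fact be dropped entirely: an arbitrary natural isomorphism already provides an isomorphism at the object $3$, so no post-composition is needed.
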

\begin{proof}
  For the sake of notation, everything to follow takes place inside of the
  category $\Ho(\eX)$.  By TR3 there is a map $h$ in $\eX$ which yields a
  map $(1,1,h,1)$ from the triangle $S(1)\to S(2) \to S(3)\to S(1)$ to the triangle 
  $S(1) \to S(2) \to C(f) \to S(1)$. For all $x\in\eX$, both triangles determine long
  exact sequences after applying the functor $\Hom(x,-)$. By the Five Lemma
  $h_* : \Hom^*(x, c)\to \Hom^*(x,C(f))$ is an isomorphism. Therefore,
  Yoneda's lemma implies the result.
\end{proof}

\subsubsection{Decategorification of localizations}\label{decatsec}
\newcommand{\kr}{K}

For references concerning short exact sequences of dg categories see \cite[\S 4.6]{Kellerdg}. 

\begin{lemma}\label{clem}
Suppose that $S : \Tp \to \eC$ is a triangle, $\th_{1,2} = S(1\to 2)$ and $c = S(3)$ in a dg category $\eC$. Then
  $S$ is isomorphic to a distingished triangle if and only if the double cone complex $\kr = C(C(\th_{1,2}) \xto{\tilde{\th}_{2,3}} c)$ is contractible where $\tilde{\th}_{2,3}$ is the extension of the map $\th_{2,3} : S(2)\to c$ to the cone $C(\th_{1,2})$.
\end{lemma}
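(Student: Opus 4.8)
The plan is to unwind the Definition \ref{disttridef} of a distinguished triangle and translate the condition "isomorphic in $\Ho(\eC^\pt)$ to the standard triangle $1 \to 2 \to C(\th_{1,2}) \to 1$" into the contractibility of an explicit complex. First I would form, inside $\eC^\pt$, the cone $C(\th_{1,2}) = (S(1)[1] \opp S(2), p)$ and observe that the given map $\th_{2,3} : S(2) \to c$ extends canonically to a closed degree-zero map $\tilde\th_{2,3} : C(\th_{1,2}) \to c$ precisely when the composite $\th_{2,3}\th_{1,2}$ is nullhomotopic — which, since $S$ is a triangle and $\th_{3,1}\th_{2,3}\th_{1,2}$ etc.\ are forced to be cycles, needs a word of justification from the symmetric triangle structure. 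Then $\kr = C(\tilde\th_{2,3})$ is a bona fide object of $\eC^\pt$, a three-term twisted complex built from $S(1)$, $S(2)$, $c$.

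Next I would prove the two implications. For the ``only if'' direction: if $S \cong S'$ in $\Ho(\eC^\pt)$, then in particular $c \cong C(\th_{1,2})$ in $\Ho(\eC^\pt)$ compatibly with the maps from $S(2)$, so $\tilde\th_{2,3}$ becomes an isomorphism in $\Ho(\eC^\pt)$; the cone of an isomorphism in a pretriangulated category is contractible (its identity is nullhomotopic), hence $\kr \simeq 0$. For the ``if'' direction: if $\kr$ is contractible, then $\tilde\th_{2,3} : C(\th_{1,2}) \to c$ is a homotopy equivalence in $\eC^\pt$ — this is the standard fact that a closed degree-zero map with contractible cone is invertible up to homotopy (produce the homotopy inverse from the contracting homotopy of $\kr$). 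The isomorphism $\tilde\th_{2,3}$ is, by construction, a morphism of triangles from $S' : 1 \to 2 \to C(\th_{1,2}) \to 1$ to $S$ (I would check that it commutes with the connecting maps $\th_{3,1}$ versus the canonical $C(\th_{1,2}) \to S(1)[1]$, possibly after adjusting by a homotopy), and an isomorphism of triangles in the sense of Definition \ref{disttridef} is exactly what is required. Applying Proposition \ref{symtriprop} to pass between homotopy classes of functors $\Tp \to \eC^\pt$ and isomorphism classes of triangles closes the argument.

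I expect the main obstacle to be the ``if'' direction's bookkeeping: extracting from a contracting homotopy on the double cone $\kr$ an honest two-sided homotopy inverse to $\tilde\th_{2,3}$, and then verifying that the resulting homotopy equivalence is compatible with \emph{all three} structure maps of the symmetric triangles, not merely $\th_{1,2}$ and $\th_{2,3}$ — the map $\th_{3,1}$ (respectively the tautological map $C(\th_{1,2}) \to S(1)[1]$) must be matched up, and this may only hold after correcting $\tilde\th_{2,3}$ or its inverse by a nullhomotopic term. A secondary subtlety is making sure the extension $\tilde\th_{2,3}$ is well defined at the chain level, i.e.\ choosing the nullhomotopy of $\th_{2,3}\th_{1,2}$ that is implicit in the symmetric triangle data; different choices change $\tilde\th_{2,3}$ by a homotopy and hence do not affect $\kr$ up to isomorphism in $\Ho(\eC^\pt)$, but this independence should be remarked on. Everything else — the cone-of-an-isomorphism-is-contractible fact, the Five Lemma / Yoneda packaging, and the appeal to Proposition \ref{symtriprop} — is routine and parallels Proposition \ref{prestriprop}.
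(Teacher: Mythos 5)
Your proposal matches the paper's proof in both direction and substance: the paper constructs the morphism of triangles $(1,1,\tilde{\th}_{2,3},1[1])$ from the standard cone triangle $S'$ to $S$, observes that $S$ being distinguished forces $\tilde{\th}_{2,3}$ to be a homotopy equivalence (hence $\kr = C(\tilde{\th}_{2,3})$ contractible), and conversely that $\kr \simeq 0$ makes $\tilde{\th}_{2,3}$ an equivalence and this same morphism an isomorphism of triangles. Your flagged subtleties — the choice of nullhomotopy implicit in defining $\tilde{\th}_{2,3}$ as a cycle, and the need to check compatibility with the third structure map $\th_{3,1}$ via a five-lemma/Yoneda argument — are real and are in fact left implicit in the paper's one-paragraph proof, so your version is, if anything, more forthcoming about the bookkeeping while following the identical route.
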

\begin{proof}
If $S$ is distinguished then the triangle $S(1)\to S(2) \to S(3)\to S(1)$ is isomorphic to $1\to 2\to C(\th_{1,2}) \to 1$ in the homotopy category via the map $(1,1,\tilde{\th}_{2,3},1[1])$, so $\tilde{\th}_{2,3}$ is a homotopy equivalence and $C(\tilde{\th}_{2,3})$ is contractible. Conversely, $C(\tilde{\th}_{2,3}) \simeq 0$ implies $\tilde{\th}_{2,3}$ is a homotopy equivalence and the map above determines an equivalence of triangles.
\end{proof}

Recall that if $a\in\Obj(\eC)$ then Drinfeld's dg quotient $\eC/\inp{a}$ can
be formed by adding a homotopy $h$ which satisfies $dh = 1_a$ to a cofibrant
replacement of $\eC$, see \cite{D}. This makes the object contractible in
the homotopy category of the Drinfeld quotient. (This can be reformulated as
a homotopy pushout \cite[Thm. 4.0.1]{TabuadaLoc}.)

The proposition below constructs a short exact sequence of dg categories by
relating the Postnikov localization $L_S\eC$ of a dg category $\eC$ to a
Drinfeld quotient $\eC/\inp{\kr}$. The subcategory $\inp{\kr}$ is generated
by the object $\kr$ in Lemma \ref{clem} above.

\begin{prop}\label{sesprop}
Suppose that $S : \Tp \to \eC$ is a triangle, $f = S(1\to 2)$ and $c = S(3)$ in a dg category $\eC$. Then there is a short exact sequence of dg categories:
$$\inp{\kr} \to \eC \to L_S(\eC)$$
in the Morita homotopy category $\Hmo$, where $\inp{\kr}$ is the dg category determined by the cone $\kr = C(C(f) \to c)$ of the natural map from the cone on $f$ to $c$ in $\eC^\pt$.
\end{prop}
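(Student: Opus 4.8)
The plan is to realize the short exact sequence $\inp{\kr}\to\eC\to L_S\eC$ in $\Hmo$ by comparing two homotopy pushouts. By Proposition \ref{anlocexprop}, the Postnikov localization $Q:\eC\to L_S\eC$ is the homotopy pushout of $S:\Tp\to\eC$ along $\kappa':\Tp\hookrightarrow\cDI$. On the other side, by Lemma \ref{clem} the triangle $S$ becomes distinguished precisely when $\kr = C(C(f)\xto{\tTT}c)$ is contractible; and by the remark recalled just before the statement, the Drinfeld quotient $\eC/\inp{\kr}$ is the homotopy pushout that forces $\kr$ (equivalently the object generating $\inp{\kr}$) to become contractible, i.e.\ the homotopy pushout of $\inp{\kr}\hookrightarrow\eC$ along the map $\inp{\kr}\to\overline{\inp{\kr}}$ that adjoins a contracting homotopy (this reformulation is \cite[Thm. 4.0.1]{TabuadaLoc}). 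So the first main step is to identify $L_S\eC$ with $\eC/\inp{\kr}$ in $\Hmo$.

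To do this, I would first pass to pretriangulated (indeed Morita-fibrant) models, which is harmless in $\Hmo$ since the constructions commute with the relevant replacements; working inside $\eC^\pt$, the objects $C(f)$, $\kr$, and the map $\tTT$ all exist by Remark \ref{conerem}. Then I would exhibit a commuting square relating the two pushout presentations: the inclusion $\Tp\hookrightarrow\cDI$ and the inclusion $\inp{\kr}\hookrightarrow\overline{\inp{\kr}}$ should fit into a diagram over $\eC$ in which the cofibers agree. Concretely, adding the homotopies $h_{2,1},h_{3,2},h_{1,3}$ of $\cDI$ to the triangle $S$ is, up to quasi-equivalence, the same datum as adding a contracting homotopy for the double cone $\kr$: the three relations $\th_{2,3}h_{3,2}+h_{1,3}\th_{3,1}=1_3$, etc., in Definition \ref{didef} are exactly what is needed to build a contraction of $\kr$, and conversely a contraction of $\kr$ produces such homotopies after transporting along the quasi-equivalence $\cDI\xto{\sim}\DI$. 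I would make this precise by constructing an explicit quasi-equivalence between the relevant relative cells, or more cleanly by invoking the universal properties: a functor $\eC\to\eX$ sends $S$ to a distinguished triangle iff it sends $\kr$ to a contractible object (Lemma \ref{clem}), so $L_S\eC$ and $\eC/\inp{\kr}$ corepresent the same functor on $\Hmo$ and are therefore isomorphic there.

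Once $L_S\eC\cong\eC/\inp{\kr}$ in $\Hmo$ is established, the short exact sequence $\inp{\kr}\to\eC\to\eC/\inp{\kr}$ is exactly the defining short exact sequence of a Drinfeld quotient in $\Hmo$ (see \cite[\S 4.6]{Kellerdg}), so the statement follows. The main obstacle I anticipate is the identification step itself: matching the homotopy-pushout presentation coming from $\kappa':\Tp\hookrightarrow\cDI$ (which glues in three generators and their differentials on the \emph{nose} of a symmetric triangle) with the one that simply kills $\kr$ — i.e.\ checking that the extra data in $\cDI$ beyond "$S$ is distinguished" is contractible relative to $\Tp$, so that it does not change the homotopy pushout. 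Proposition \ref{anrelprop}(1), which says $\Hom_{\Hqe}(\cDI,\eC)$ is isomorphism classes of distinguished triangles, is the key input that lets me avoid a hands-on cell-by-cell comparison and instead argue via corepresentability.
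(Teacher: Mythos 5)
Your proposal is correct and follows the paper's own strategy: both identify $L_S\eC$ with the Drinfeld quotient $\eC/\inp{\kr}$ by showing, via Lemma \ref{clem} and the universal properties (Definition \ref{anlocdef} and \cite[Thm.\ 4.0.1]{TabuadaLoc}), that they corepresent the same functor, and then invoke the short exact sequence attached to a Drinfeld quotient. You are right to set aside the direct comparison of homotopy pushout presentations in favor of corepresentability; the one place where your sketch is lighter than the paper is the case $\kr\notin\Ob(\eC)$, where ``pass to Morita-fibrant models'' needs to be backed by the explicit argument that $L_S\b:L_S\eC\to L_S(\eC^\perf)$ is an isomorphism in $\Hmo$ (using that cofibrations in $\Hqe$ and $\Hmo$ agree, so homotopy pushouts in $\Hqe$ are homotopy pushouts in $\Hmo$).
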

\begin{proof}
First assume that $\kr$ is represented by an object in $\eC$. By Definition \ref{anlocdef}, the Postnikov localization $L_S\eC$ satisfies the universal property,
\begin{equation}\label{eq:A}
\Hom_{\Hqe}(L_S\eC, \eX) \xto{\sim} \Hom^T_{\Hqe}(\eC,\eX),
\end{equation}
the set of homotopy classes of functors from $L_S\eC$ to any dg category $\eX$ is in bijection with the set of homotopy classes of functors $f : \eC\to \eX$ which map $\im(S)$ to distinguished triangles in the homotopy categories: $\Ho(f) : \Ho(\eC) \to \Ho(\eX)$. By the lemma above, the condition that $\Ho(fS) : \Tp \to \Ho(\eX)$ maps to a distinguished triangle is equivalent to the condition that a certain double cone complex $\kr$ is contractible. If $\tTT : C(\OT)\to 3$ is given by $\tTT = (0,\TT)$ then set
 $\kr = C(\tTT)$ so that:
$$\kr = C(\tTT) = (1[2]\opp 2[1]\opp 3, d_\kr)\conj{ where } d_\kr = \left(\begin{array}{ccc} d_1 & \OT & \\  & -d_2 & \TT \\ &  & d_3 \end{array} \right)$$ 
is contractible in $\eX$. So there is a bijection of sets:
\begin{equation}
  \label{eq:B}
\Hom^T_{\Hqe}(\eC,\eX) \xto{\sim} \Hom^{\inp{\kr}}_\Hqe(\eC,\eX)
\end{equation}
where $\Hom^{\inp{\kr}}(\eC,\eX)$ is the set of maps $f : \eC \to \eX$ which send $\kr$ to a contractible object in $\eX$.  Since
\begin{equation}
  \label{eq:C}
\Hom_{\Hqe}(\eC/\inp{\kr},\eX) \xto{\sim} \Hom^{\inp{\kr}}_\Hqe(\eC,\eX)
\end{equation}
see \cite[Thm. 4.0.1]{TabuadaLoc}. The maps in Equations \eqref{eq:A}, \eqref{eq:B} and \eqref{eq:C} combine to show that the Postnikov localization satisfies the same universal property as the Drinfeld quotient. Therefore, $\eC/\inp{\kr}$ and $L_S\eC$ are isomorphic in $\Hqe$. Associated to any such Drinfeld quotient, there is a short exact sequence:
$$\inp{\kr} \hookrightarrow \eC \to \eC/\inp{\kr}$$
in the Morita homotopy category $\Hmo$ \cite[Rmk. 4.0.2]{TabuadaLoc}. Since $\Hmo$ is a quotient of $\Hqe$, the isomorphism $\eC/\inp{\kr} \cong L_S\eC$ in $\Hqe$ implies the isomorphism  $\eC/\inp{\kr} \cong L_S\eC$ in $\Hmo$, and there is a short exact sequence of dg categories:
$$\inp{\kr} \hookrightarrow \eC \to L_S\eC.$$

Now suppose that $\kr$ is {\em not} representable by object in $\eC$.  In the
Morita homotopy category $\Hmo$, the fibrant replacement $\eC^\perf$ of
$\eC$ is the category of perfect modules over $\eC$: an idempotent
completion of the pretriangulated hull. The object $\kr$ is representable in
$\eC^\perf$, (see Remark \ref{conerem}), and so, by the argument above, there
is a short exact sequence:
$$\inp{\kr} \to \eC^\perf \to L_S(\eC^\perf).$$

In the homotopy category of any model category, every object $\eC$ is isomorphic to
its fibrant replacement $\b : \eC \xto{\sim} \eC^\perf$. Since
cofibrations in $\Hmo$ and $\Hqe$ are identical, a homotopy pushout in
$\Hqe$ is a homotopy pushout in $\Hmo$. The map $\b$ determines an
equivalence of pushout diagrams from $\cDI \leftarrow \amalg_s \Tp \to \eC$ to
$\cDI \leftarrow \amalg_s \Tp \to \eC^{\perf}$ from which it follows that the map
$L_S\b : L_S\eC \to L_S(\eC^\perf)$ is an isomorphism in $\Hmo$.

There is a commuting diagram extending the righthand side of the short exact sequence in which
all of the vertical maps are isomorphisms in $\Hmo$.
$$\begin{tikzpicture}[every node/.style={midway},node distance=2cm]
  \matrix[column sep={4cm,between origins}, row sep={2cm}] at (0,0) {
  \node(R) {$\eC$}  ; & \node(S) {$L_S\eC$}; \\
  \node(R/I) {$\eC^\perf$}; & \node (T) {$L_S(\eC^\perf)$};\\
  };
  \draw[->] (R) -- (R/I) node[anchor=east]  {$\b$};
  \draw[->] (R) -- (S) node[anchor=south] {};
  \draw[->] (S) -- (T) node[anchor=west] {$L_S\b$};
  \draw[->] (R/I) -- (T) node[anchor=south]  {};
\end{tikzpicture}$$
So there is a short exact sequence: $E \to \eC \to L_S\eC$ where $E$ is a dg category Morita equivalent to $\inp{\kr}$.
\end{proof}

A short exact sequence of dg categories in $\Hmo$ induces a long exact
sequence among additive invariants of dg categories
\cite{Kellerdg,TabAdd}. The corollary below is the first part of the long
exact sequence associated to Hochschild homology.

\begin{cor}
Suppose that $S$, $\inp{\kr}$ and $\eC$ are as in the proposition above. Then there is an exact sequence of abelian groups:
$$HH_0(\inp{\kr}) \to HH_0(\eC) \to HH_0(L_S(\eC))\to 0$$
\end{cor}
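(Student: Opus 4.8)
The plan is to invoke the standard machinery relating short exact sequences of dg categories to long exact sequences of additive invariants, and to apply it to the short exact sequence produced in Proposition \ref{sesprop}.

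\textbf{Step 1.} By Proposition \ref{sesprop}, there is a short exact sequence of dg categories
$$\inp{\kr} \to \eC \to L_S(\eC)$$
in the Morita homotopy category $\Hmo$. This is the input we need.

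\textbf{Step 2.} Algebraic $K$-theory, and in particular $K_0$, is an additive invariant of dg categories in the sense of \cite{TabAdd} (see also \cite[\S 5.1]{Kellerdg}). A short exact sequence of dg categories in $\Hmo$ is sent by any such invariant to a distinguished triangle of spectra (or at least, for $K_0$, to a long exact sequence of abelian groups). Applying this to the sequence above yields the exact sequence
$$\cdots \to K_1(L_S(\eC)) \to K_0(\inp{\kr}) \to K_0(\eC) \to K_0(L_S(\eC)) \to 0,$$
and truncating gives precisely the claimed short exact sequence
$$K_0(\inp{\kr}) \to K_0(\eC) \to K_0(L_S(\eC)) \to 0.$$
The surjectivity on the right is the assertion that $K_0$ is right exact, which is immediate from the fact that the Verdier quotient $\Ho(L_S(\eC))$ is generated (as a triangulated category) by the image of $\Ho(\eC)$, so every class in $K_0(L_S(\eC))$ lifts.

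\textbf{Main obstacle.} There is essentially no obstacle here: the corollary is a formal consequence of Proposition \ref{sesprop} together with the black box that $K_0$ turns $\Hmo$-short exact sequences into right-exact sequences, a fact recorded in \cite{Kellerdg, TabAdd}. The only point requiring a word of care is that Proposition \ref{sesprop} produces the sequence only up to replacing $\inp{\kr}$ by a Morita-equivalent category $E$ when $\kr$ is not representable in $\eC$; but $K_0$ is a Morita invariant, so $K_0(E) \cong K_0(\inp{\kr})$ and the statement is unaffected. Thus the proof is a one-line citation once Proposition \ref{sesprop} is in hand.
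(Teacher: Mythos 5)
Your proof is correct and follows essentially the same route as the paper: apply the fact that a short exact sequence of dg categories in $\Hmo$ induces a long exact sequence of additive invariants such as algebraic $K$-theory, then truncate to obtain the stated right-exact sequence in $K_0$. Your added remark about replacing $\inp{\kr}$ by a Morita-equivalent $E$ when $\kr$ is not representable in $\eC$ is a sensible clarification, but it does not change the substance of the argument.
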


\subsubsection{A Postnikov localization as a module}\label{braidgroupsec}

In this section we explain how Postnikov localizations inherit the structure of a module category over $\Endo(\cDI)$ in $\Hmo$.

If $\eC \cong L_S\eX$ is a Postnikov localization of a dg category $\eX$,
then the map $\iota : \coprod_{s\in\eS} \cDI\to \eC$ from the proof of
Proposition \ref{anlocexprop} yields a map
$\iota^{\pt} : (\coprod_{s\in\eS}\cDI)^\pt \to \eC^{\pt}$. Therefore, by
Proposition \ref{ptmultprop}, there is a map
$\hat{\iota}^\pt : \prod_{s\in\eS}\cDI^{\pt} \to \eC^\pt$. The pullback of
the map $\hat{\iota}^\pt$ along the diagonal map
$\Delta_\eS : \cDI^\pt \to \prod_{s\in\eS}\cDI^{\pt}$ is a functor:
$j : \cDI^\pt \to \eC^\pt$. The map $j$ determines an action of $\Endo(\cDI^\pt)$ on $\eC^\pt$.
\begin{center}
\begin{tikzpicture}[scale=10, node distance=2.5cm]
\node (A1) {$\cDI^\pt$};
\node (B1) [right of=A1] {$\eC^\pt$};
\node (C1) [below of=A1] {$\cDI^\pt$};
\node (D1) [right of=C1] {$\eC^\pt.$};
\draw[->] (A1) to node {$j$} (B1);
\draw[->,dashed] (B1) to node {$\bar{g}$} (D1);
\draw[->] (A1) to node [swap] {$g$} (C1);
\draw[->] (C1) to node {$j$} (D1);
\end{tikzpicture}
\end{center}
The universal property in Definition
\ref{anlocdef} gives us a lift $\bar{g}$ of $j\circ g$ for each $g\in \Endo(\cDI^\pt)$ and uniqueness of lifts implies that lifts
commute with compositions.

\subsection{Ungraded dg categories}\label{grading}
The main body of the paper will use the trivial grading, a more
sophisticated $G$-grading will be introduced at a later time \cite{C}. Here we
require $k$ to be a field of characteristic $2$.

There is a category $\Kom^\un_k$ of ungraded chain complexes.  In more
detail, An {\em ungraded chain complex} is a $k$-vector space $C$ and a
differential $d_C : C\to C$ which satisfies $d^2_C = 0$. A map $f : C \to D$
of ungraded chain complexes is a map of vector spaces. If $\Hom(C,D)$
denotes the vector space of such maps from $C$ to $D$ then there is an
associative composition and for each $C$ there is an identity map
$1_{C} : C\to C$.  This determines the category $\Kom^\un_k$.

The monoidal structure in $\Kom^\un_k$ is the tensor product; the differential is defined by:
$$d_{C\ott D} (x\ott y) = d_Cx \ott y + x \ott d_D y.$$
If $f \in \Hom(C,D)$ then the formula $df = fd_C - d_D f$ defines a
differential which makes $(\Hom(C,D),d)$ an ungraded chain complex and
$\Kom^\un_k$ is a category which is enriched over itself.

If $\Kom_k^{\ZZ/2}$ denotes the dg category of $\ZZ/2$-graded chain complexes then there is an adjunction
$$\iota : \Kom_k^\un \leftrightarrow \Kom_k^{\ZZ/2} : \rho$$
in which $\iota$ maps $(C,d)$ to the chain complex $(C_n, d_n)_{n\in \ZZ/2}$ where $C_n = C$ and $d_n = d$ for each $n\in \ZZ/2$. If $(C_n, d_n)_{n\in\ZZ/2}$ is a chain complex then $C = \oplus_n C_n$ and $d = \sum_n d_n$ determine a forgetful functor $\rho : \Kom_k^{\ZZ/2} \to \Kom_k^\un$.

An {\em ungraded dg category} $\eC$ is a category which is enriched over
$\Kom^\un_k$. The adjunction above induces an adjunction between the
category $\dgcat^\un$ of ungraded dg categories and the category
$\dgcat^{\ZZ/2}$ of $\ZZ/2$-graded categories. This extends to a Quillen
adjunction which induces model structures corresponding to $\Hqe$ and $\Hmo$
on $\dgcat^\un$, for analogous details see \cite[\S 5.1]{Dyckerhoff}.

\newpage
\section{Formal contact categories}\label{formalcatsec}
In this section, a contact category $\KoS$ is associated to each oriented
surface $\Sigma$.  The remainder of the paper will assume that $k$ is a
field of characteristic $2$ and use the trivial grading.

\subsection{Bypass moves}\label{curvessec}

In what follows surfaces will always be pointed in the sense defined below.

\begin{defn}\label{psurfdef}
  A {\em pointed surface $\Si$} is a compact connected surface $\Si$ in
  which the connected components of the boundary have been ordered and each boundary component $\partial_i\Si$ contains a marked point $z_i\in\partial_i\Si$:
$$\partial \Si = \partial_1\Si \cup \cdots \cup \partial_n \Si,\quad\quad z = \{ z_1,\ldots,z_n\} \conj{ and } z_i\in \partial_i\Si.$$
Every closed surface is canonically pointed. 

A pointed oriented surface $\Si$ in which a collection of points
$m\subset \partial\Si$ satisfy the conditions:
$$m\cap z = \emptyset \conj{ and } \vnp{m} \in 2\ZZ_+$$
will be denoted by $(\Si,m)$. We write $m=\cup_i m_i$ where
$m_i\subset \partial_i \Si$. Often notation will be abused and $m$ will be
used to denote both the set $m$ and the cardinality $\vnp{m}$.
\end{defn}

An orientation on a pointed surface $\Si$ induces an orientation of each
boundary component. The points $m_i \subset \partial_i\Si$ inherit an
ordering by starting from the basepoint $z_i\in \partial_i\Si$ and 
 traversing the boundary circle in this direction. Combining the order on each $m_i\subset \partial_i\Si$ with the ordering of the
boundary components $\{\partial_1\Si,\partial_2\Si,\ldots,\partial_n\Si\}$ produces a total ordering on the set $m$.

Recall that an arc $\ga$ is properly embedded in a pointed surface when
$\partial \ga \subset \partial\Si\backslash z$ and
$int(\ga)\cap \partial\Si = \emptyset$. Arcs $\ga$ are required
to intersect the boundary transversely.

\begin{definition}
Let $\Sigma$ be a pointed orientable surface possibly with boundary.
Then a properly embedded collection of smooth curves and arcs $\gamma$ on $\Sigma$ is
a {\em multicurve}.

If $\ga$ is a multicurve on $(\Si,m)$ then we require that the set
$\ga\cap\partial\Si$ coincides with the  points $m$ chosen on the
boundary $\partial\Si$.
\end{definition}

\begin{definition}\label{divsetdef}
A non-empty multicurve $\ga$ is said to be a {\em dividing set on the surface $\Sigma$} when there are disjoint subsurfaces $\Sp$ and $\Sm$ of $\Sigma$ so that 
$$\Si \backslash \ga = \Sp \cup \Sm \conj{ and as sets } \ga = \partial \Sp \backslash \partial \Si = \partial \Sm  \backslash \partial \Si.$$
If $\Sigma$ is a surface with boundary then we require that the intersection number $i(\ga, \partial \Sigma)$ is a positive even integer. In particular, when $\Si$ has boundary we {\em require} that $m\geq 2$.
\end{definition}

The subsets $\Sp$ and $\Sm$ of $\Sigma$ are the {\em positive region} and the {\em negative region} of $\ga$ on $\Sigma$ respectively. These regions may be labelled by $+$ and $-$ signs in illustrations. 

If a multicurve $\ga$ is a dividing set then for each boundary component $\partial_i\Si$,
the number of points $\ga\cap \partial_i \Si$ must be even.

\begin{defn}\label{dualdef}
For any dividing set $\ga$ on $\Sigma$, there is a {\em dual dividing set} $\ga^\vee$ on $\Sigma$ that is obtained by exchanging the positive and negative regions.
\end{defn}

The {\em equator} $\ell = \{ (x,y) : y = 0 \} \subset D^2 = \{ x\in\RR^2 : \vert x \vert < 1\}$ of a disk is the line formed by the $x$-axis in the standard embedding: $D^2 \subset \RR^2$. The equator $\ell$ divides the disk $D^2$ into two {\em half-disks}: a bottom $B$ and a top $T$. 
$$D^2 = B \cup T \conj{ and } B \cap T = \ell$$
The boundary $\partial T$ of the top half-disk $T$ consists of the equator $\ell$ and the northern hemisphere $\nu \subset \partial D^2$ of the boundary circle:
$$\partial T = \ell \cup \nu.$$

\begin{definition}
\label{divdiskdef}
Suppose that $\ga$ is a dividing set on an oriented surface $\Sigma$. Then a {\em bypass disk on $\ga$} is a smoothly embedded oriented half-disk $(T,\ell) \subset (\Sigma\times [0,1], \Sigma \times 0)$ which satisfies the following properties:
\begin{enumerate}
 \item The equatorial arc $\ell$ intersects $\ga$ at exactly three points: $a,b$ and $c$. So that 
$$\ell = [a,b] \cup [b,c] \conj{ and } a < b < c.$$
where the order of the points is induced by the orientation.
 \item The boundary points of the arcs $\ell$ and $\nu$ are the points $a$ and $c$.
\end{enumerate}
A {\em dividing set} $\b$ of a bypass disk $T$ is a properly embedded arc starting at a point $x$ between $a$ and $b$ and ending at a point $y$ between $b$ and $c$.
\end{definition}

Definition \ref{divdiskdef}  above is illustrated below.
\begin{center}\label{fig:Bypass_Disk}
\begin{overpic}
{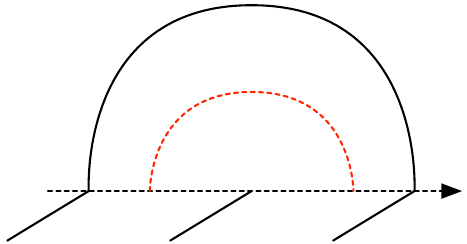}
 \put(12.5, 22.5){$\ell$}
 \put(51,7){$+$}
 \put(130,7){$-$}
 \put(45,30){$a$}
 \put(117.5,30){$b$}
 \put(202,30){$c$}
\put(68,15){$x$}
\put(165,15){$y$}
 \put(55,100){$\nu$}
\put(117.5,80){$\b$}

\end{overpic}
\end{center}
The picture above shows a bypass disk $T$ embedded in a thickened surface
$\Sigma\times [0,1]$. The boundary of the half-disk consists of the dashed
equatorial arc $\ell$ and the boundary of the northern hemisphere $\nu$. The
dashed red curve $\b$ is the dividing set for the bypass disk. The three
straight lines at the bottom are part of a dividing set $\ga$ on the surface
$\Sigma$. The labels $a,b,c$ indicate the intersection points of the arc
$\ell$ with the dividing set $\gamma$. The orientation of $T$ is determined
by fixing the direction of the equator $\ell$ and using the standard
orientation along the normal axis. The equator $\ell$ is drawn beyond the
boundary of $T$ for aesthetic reasons.

\begin{remark}
  If $\Sigma\subset (M,\xi)$ is a convex surface in a contact 3-manifold
  then $\xi$ determines a dividing set $\ga$ on $\Sigma$. A bypass disk
  $T$, embedded into a regular neighborhood of $\Sigma$, determines an
  operation on the dividing set called {\em bypass attachment} that changes
  the dividing set and the contact structure in a well-understood way
  \cite{KoOn}. These operations generate the contact structures on $M=\Si\times
  [0,1]$ in a sense which has been made precise by K. Honda
  \cite[Lem. 3.10 (Isotopy discretization)]{KoGluing}.  
\end{remark}

If $\Si$ is an oriented surface then the space $\Sigma\times [0,1]$ will be
always be oriented by appending the vertical direction to the orientation of
$\Sigma$.

\begin{defn}\label{orpresdef}
  A bypass disk $(T,\ell)$ in $\Sigma\times [0,1]$ determines the product orientation
  on $\Sigma\times [0,1]$. In more detail, if $\ell$ represents the direction of the
  equator and $n$ is the direction of the disk normal to the surface then
  the three vectors $(\ell,\ell\times n,n)$ determine this
  orientation of $\Si\times [0,1]$. If the orientation induced by $T$ agrees with that of
  $\Sigma\times [0,1]$ then the bypass disk is said to be {\em orientation
    preserving}, otherwise it is {\em orientation reversing}.
\end{defn}

\begin{definition}[Bypass move]
\label{bypassattdef}
Suppose that $\ga$ is a dividing set on an oriented surface $\Sigma$, $T$
is a bypass disk on $\ga$ and $N(T)$ is a regular neighborhood of the
half-disk $T\subset \Sigma\times [0,1]$. The boundary $\partial N(T)$
contains two copies of the half-disk $T$ which we will call {\em
  faces}. Each face, being a parallel copy of the half-disk $T$, contains a collection of
points: 
$$a < x < b < y < c$$
ordered along an equator $\ell$, a dividing set $\b$ and a northern hemisphere
$\nu$. Moreover, there are three line segments $\ga_a$, $\ga_b$ and $\ga_c$ from
$\ga$, on either side, meeting the points $a$,$b$ and $c$ respectively. The face in the $\ell\times n$ direction of $T\times \{\frac{1}{2}\}\subset T\times [0,1]$ is called the {\em positive face}, the other face is the {\em negative face}.

There is a dividing set $\eta$ on the surface $\Sigma' = \partial(\Sigma \cup N(T))$ which is
constructed by regluing the curves $\ga$ according to the prescription below.
\begin{enumerate}
\item If $T$ is orientation preserving then on the positive face
 attach $\ga_b$ to the point $x$ of $\b$ and attach $\ga_c$ to the point $y$ of $\b$ 
and  on the negative face attach $\ga_a$ to the point $x$ and attach $\ga_b$ to the point $y$.

\item If $T$ is not orientation preserving then on the positive face 
attach $\ga_a$ to the point $x$ of $\b$ and attach $\ga_b$ to the point $y$ of $\b$
and on the negative face attach $\ga_b$ to the point $x$ and attach $\ga_c$ to the point $y$.

\item Attach the curve $\ga_a$ on the latter face to the curve
   $\ga_c$ on the former face by an interval that crosses over the
  $\nu \times [0,1]\subset \partial N(T)$ boundary component along the
  diagonal.
\end{enumerate}

After smoothing the corners, the surface $\Sigma'$ is diffeomorphic to $\Sigma$ by a diffeomorphism $\psi$ which is isotopic to identity. If $\ga' = \psi(\eta)$ then the {\em bypass move} $\theta : \ga \to \ga'$ is the tuple:
$$\ga \xto{\theta} \ga' = (T, \ga,\ga')$$
given by the bypass disk $T$, the dividing set $\ga$ and the curve $\ga'$ determined by the operation described above.
\end{definition} 

\begin{remark}\label{smoothingrem}
  The definition of bypass move requires a choice of smoothing. We fix one choice and use it consistently.
Any two such choices will produce equivalent categories.
\end{remark}

The picture below shows the orientation preserving bypass move defined
above. On the lefthand side, the dividing set $\ga$ consists of three
horizontal lines and the equator $\ell$ of the bypass disk $T$ is indicated
by the vertical line. The rest of the bypass disk $T$ is assumed to come out
of the page.  The positive and negative regions on the right are determined
by the positive and negative regions on the left.
$$\CPPic{one2} \xlto{\th}\quad \CPPic{twob}$$
In the contact category, bypass moves are required to be orientation preserving. Since the orientation of a bypass disk $T$ is determined by the direction of the equator, we will always choose orientations which are compatible with the ambient orientation of the surface. So it is not necessary to denote the orientation in most illustrations.

\subsubsection{Special types of bypass moves}\label{specialbypassec}

The two special types of bypass moves isolated below correspond precisely to
the relations (1) and (2) in Definition \ref{pkosdef}.

\begin{defn}
  A bypass move $\th : \ga \to \ga'$ is {\em capped} when either the subset
  $[a,b]$ or the subset $[b,c]$ of the associated equator $\ell$ is the
  equator $\rho$ of an embedded half-disk
  $(T, T\backslash \rho) \to (\Si, \ga)$
  which does not intersect the equator at any other point.
\begin{center}
\begin{overpic}
{NEcap}
 \put(26,22.5){$T$}
\end{overpic}
\end{center}
Intercardinal directions will be used to locate caps. For instance, a bypass featuring a cap $T$ in its northeastern corner is pictured above. 
\end{defn}

\begin{example}
The picture below contains one cap $T$ in the southeastern corner. The half-disk labelled $S$ is not a cap because it intersects the equator twice.
\begin{center}
\begin{overpic}
{SEcap2}
 \put(41,33){$T$}
 \put(61,33){$S$}
\end{overpic}
\end{center}

  \end{example}

Capped bypass moves are the least interesting bypass moves because,
depending upon where the cap is found, a capped bypass must be either nullhomotopic
or equal to the identity map in the formal contact category.

\begin{defn}\label{disjdef}
  Two distinct bypass moves $\th : \ga \to \ga'$ and $\th' : \ga\to \ga''$ are {\em disjoint}, up to isotopy with end points fixed in the dividing set, when the equators of their bypass disks have geometric intersection number zero. 
\end{defn}

If a collection of bypass moves $\{\th_i\}_{1 \leq i \leq n}$ on a dividing set $\ga$ is
pairwise disjoint then performing the moves in any order produces the same
result: $\ga'$. So the union 
$$\amalg_{i=1}^n \th_i : \ga \to \ga'$$
may be viewed as kind of bypass combo-move.

\subsubsection{Isotopy of curves and disks}\label{isotopysec}

\begin{defn}\label{isodef}
  If $\ga$ and $\ga'$ are dividing sets on a surface $\Sigma$ then they are
  {\em isotopic:} $\ga \simeq \ga'$, when they are isotopic as multicurves
  on $\Sigma$. If $\Si$ is a pointed surface then the isotopy is required to
  fix the basepoints $z\subset \partial\Si$. If $(\Si,m)$ is a surface with
  points $m$ on each boundary component then the isotopy is required to fix
  the points at which the dividing sets attach to each boundary component.

Two bypass moves $\theta =(T,\ga,\ga')$ and $\theta' = (S,\d,\d')$ are {\em
  isotopic:} $\theta \simeq \theta'$, when the graph $\ga \cup \ell$ is
isotopic to $\d \cup \rho$ where $\ell$ and $\rho$ are equators of $T$ and
$S$ respectively.
\end{defn}

\begin{remark}\label{tmanrem}
  If $\Sigma$ is realized as a convex surface in the 3-manifold
  $M = \Sigma\times [0,1]$ and the dividing sets $\ga$ and $\ga'$
  corresponding to two contact structures $\xi$ and $\xi'$ are isotopic then
  $\xi$ and $\xi'$ are contactomorphic \cite{KoOn}. Since our motivation is
  to produce a category in which morphisms behave like contact structures up
  to contactomorphism, isotopic dividing sets are identified in Definition
  \ref{pkosdef} below.
\end{remark}

\subsection{The contact category}\label{contactdefsec}

\begin{defn}\label{pkosdef}
  The {\em pre-formal contact category $\PKoS$} is the ungraded $k$-linear
  category with objects corresponding to isotopy classes of dividing sets on
  $\Sigma$ and maps generated by isotopy classes of orientation preserving
  bypass moves subject relations below.
\begin{enumerate}
\item If $\th$ is a capped bypass move then $\th = 1$ when the cap can be found in the northwest or southeast:
$$\CPPic{NWcap} = 1 \conj{ and } \CPPic{SEcap} = 1.$$
\item If $\th$ and $\th'$ are disjoint bypass moves then the maps that they determine commute:
$$\th\th' = \th\amalg \th' = \th'\th.$$
\end{enumerate}
\end{defn}

The relations above are required for the formal contact category, defined
below, to have any bearing on contact geometry, see Remark \ref{tmanrem}
above. In Section \ref{overtwistedsec}, we will show that the first relation
implies that $\th=0$ in the associated homotopy category when the
corresponding bypass is capped in the northeast or the southwest:
$$\CPPic{NEcap} = 0 \conj{ and } \CPPic{SWcap} = 0.$$

The next proposition shows that every bypass move determines a triple of composable morphisms. This determines a functor from the category $\Tp$ in Def. \ref{Ttridef} to the category $\PKoS$.
This proposition is due to K. Honda and K. Walker, see \cite{KoHo,Walker2}.

\begin{prop}\label{triprop}
  For each oriented surface $\Sigma$ and each dividing set $\ga$ on
  $\Sigma$, each bypass move $\theta$ on $\ga$ determines a functor
  $\tilde{\theta} : \Tp \to \PKoS$.
\end{prop}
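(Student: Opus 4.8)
The plan is to exhibit $\tilde\theta$ as the \emph{bypass triangle} determined by the bypass disk underlying $\theta$. Write $\theta = (T,\ga,\ga')$ and recall from Definition \ref{bypassattdef} that, near the equator $\ell$ of $T$, the dividing set $\ga$ appears as three arcs $\ga_a,\ga_b,\ga_c$ crossing $\ell$ at the cyclically ordered points $a<b<c$, together with the dividing arc $\b$ of $T$ from a point $x\in(a,b)$ to a point $y\in(b,c)$. The topological input here, due to K. Honda and K. Walker, is that this local configuration carries a canonical cyclic triple of bypass disks $T = T^{(1)}, T^{(2)}, T^{(3)}$, each obtained from the previous one by the $120^\circ$ rotation that cyclically permutes the roles of $a,b,c$ (with the orientation on each $T^{(i)}$ fixed to be compatible with the product orientation of $\Si\times[0,1]$, as in Definition \ref{orpresdef}). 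Performing the associated bypass moves yields a cyclic chain of dividing sets
$$\ga \xto{\ \theta\ } \ga' \xto{\ \theta'\ } \ga'' \xto{\ \theta''\ } \ga,$$
where $\theta' = (T^{(2)},\ga',\ga'')$ and $\theta'' = (T^{(3)},\ga'',\ga)$. One then defines $\tilde\theta$ on objects by $1\mapsto[\ga]$, $2\mapsto[\ga']$, $3\mapsto[\ga'']$ and on the free generators of $\Tp$ by $\theta_{1,2}\mapsto\theta$, $\theta_{2,3}\mapsto\theta'$, $\theta_{3,1}\mapsto\theta''$.

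Because $\Tp$ is the \emph{freely} generated (ungraded) dg category on the three composable morphisms $\theta_{1,2},\theta_{2,3},\theta_{3,1}$ with vanishing differential and no relations, this data extends uniquely to a dg functor $\tilde\theta\co\Tp\to\PKoS$; the only thing that has to be checked is that the three chosen morphisms are composable with the stated sources and targets, which is exactly the assertion that the chain above closes into a cycle. Nothing needs to be verified about compatibility with the $k$-linear or ungraded structure: every morphism of $\PKoS$ is closed, and $\Tp$ imposes no relations, so any assignment of objects and composable morphisms is automatically functorial.

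The substance of the proof is therefore the local bypass-triangle claim, which I would verify by fixing the standard model of the neighbourhood $N(T)$ from Definition \ref{bypassattdef}, drawing its two faces with marked points $a<x<b<y<c$, dividing arc $\b$, and segments $\ga_a,\ga_b,\ga_c$, and then tracking how those segments are reglued for each of the three rotated disks; the three dividing sets that appear are precisely the three pictures of the Universal Property in the introduction, and matching them against the regluing recipe is a finite check. I would also record independence of the chosen isotopy representative: isotopic bypass moves have isotopic graphs $\ga\cup\ell$ by Definition \ref{isodef}, hence isotopic rotated disks and hence isotopic $\theta',\theta''$, so they determine the same functor once isotopic data is identified in $\PKoS$ as in Definition \ref{pkosdef}. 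The main obstacle is pure bookkeeping: correctly distinguishing the orientation-preserving and orientation-reversing branches of Definition \ref{bypassattdef} and checking that all three rotated disks admit compatible orientations, so that $\theta,\theta',\theta''$ are genuine morphisms of $\PKoS$ (which contains only orientation-preserving moves) and the cyclicity is not spoiled by the choice of smoothing; Remark \ref{smoothingrem} allows one to pin down a single consistent convention.
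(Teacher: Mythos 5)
Your proposal is correct and follows essentially the same route as the paper: given $\theta = (T,\ga,\ga')$, produce the other two bypass disks (the paper says $T_B$ on $\ga_B$, $T_C$ on $\ga_C$; you phrase this as the $120^{\circ}$ rotation of the local model) so that the three moves close into a cycle, then invoke freeness of $\Tp$ to get the functor, with well-definedness up to isotopy. The extra bookkeeping you flag (orientation-preserving branches, choice of smoothing) is genuinely present but implicit in the paper's assertion that ``these choices are unique up to isotopy.''
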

\begin{proof}
Set $\ga_A = \ga$ and $\th_A = \th$.
  By definition, a bypass move $\theta_A = (T_A, \ga_A,\ga_B)$ is locally modelled
  on a bypass disk $T_A$ in $\Sigma\times [0,1]$ which intersects $\ga_A$ in
  three points. There is a bypass disk $T_B$ on the dividing set $\ga_B$ which
  results from the bypass move $\th_A$. The disk $T_B$ determines a bypass
  move $\th_B = (T_B,\ga_B,\ga_C)$ and there is a bypass disk $T_C$ on the
  dividing set $\ga_C$. The disk $T_C$ determines a bypass move
  $\theta_C = (T_C,\ga_C,\ga_A)$; the result of the bypass $T_C$ is the
  original dividing set $\ga = \ga_A$. These choices are unique up to isotopy.
\end{proof}

The construction above is illustrated below. Each of the arrows in the diagram is
a bypass move. The solid lines represent dividing sets on the surface
$\Sigma$ and the dashed lines represent the equators of bypass disks.
\begin{center}
\begin{tikzpicture}[scale=10, node distance=2.5cm]
\node (A2) {\CPic{one2}};
\node (B2) [right=1cm of A2] {};
\node (C2) [right=1cm of B2] {\CPic{two2}};
\node (D2) [below=1.41421356cm of B2] {\CPic{three2}};
\draw[->, bend left=35] (A2) to node {$\theta_A$} (C2);
\draw[->, bend left=35] (C2) to node {$\theta_B$} (D2);
\draw[->, bend left=35] (D2) to node {$\theta_C$} (A2);
\end{tikzpicture}
\end{center}
The icon at the source of a given arrow represents a dividing set $\ga$ on
the surface $\Sigma$. The icon at the target of the arrow represents the
dividing set obtained by performing the bypass move with equator given by
the dashed line in the source.

The proposition above allows us to associate a functor
$\tilde{\theta} : \Tp \to \PKoS$ to each bypass move $\th : \ga \to \ga'$
between dividing sets on $\Sigma$. Composing the coproduct 
$\coprod_{\theta}\tilde{\theta} : \coprod_\theta \Tp \to \coprod_\theta \PKoS$
of all such functors with the fold map $\coprod_\th \PKoS \to \PKoS$ yields the functor:
\begin{equation*}\label{locfun}
\Xi : \coprod_\theta \Tp \to \PKoS.
\end{equation*}

\begin{defn}\label{formaldef}
  The {\em formal contact category} $\KoS$ is the pretriangulated hull of
  the Postnikov localization of the pre-formal contact category $\PKoS$
  along the functor $\Xi$ above.
$$\KoS = L_{\Xi}\PKoS^{\pt}$$
\end{defn}

By Proposition \ref{prestriprop}, the bypass triangles introduced by the
Postnikov localization remain distinguished triangles in the homotopy
category of the hull. The formal contact category $\KoS$ is the universal
pretriangulated category generated by bypass moves, containing bypass
triangles and satisfying the relations $(1)$ and $(2)$.

\begin{conjecture}\label{cofibrem}
  A cofibrant-fibrant replacement for $\KoS$ can be constructed without
  homotopy pushouts. Note that, before relations $(1)$ and $(2)$ are applied to the pre-formal contact category:
$$\PKoS = \PPKoS/\inp{(1),(2)},$$
the ``pre-pre-formal contact category'' is freely generated by bypass
moves. Any freely generated category is cofibrant as it can be obtained by a
series of pushouts along generating cofibrations in $\Hqe$. One can then
adjoin copies of Drinfeld's category $\dr$ via pushout and copies of a
resolution for the symmetric algebra for each instance of relations $(1)$
and $(2)$ respectively. The result is cofibrant in $\Hqe$, so the homotopy
pushout which underlies the Postnikov localization in Definition
\ref{formaldef} is now an ordinary pushout and the result of this pushout is both cofibrant and fibrant in $\Hqe$. The idempotent completion $L_{\Xi}\PKoS^{\perf}$ of $\KoS$ is cofibrant and fibrant in the Morita category $\Hmo$.
\end{conjecture}

\newpage
\section{Elementary properties of contact categories}\label{propertiessec}
In this section many of the properties which should hold for the contact
categories \cite{KoHo} are shown to hold for the formal contact
categories. The formal contact category associated to a surface decomposes
into a product of formal contact categories with fixed Euler invariant. The
category with Euler invariant $n$ is equivalent to the category with Euler
invariant $-n$. Reversing the orientation of the surface is equivalent to
forming the opposite category. A dividing set featuring a homotopically
trivial curve is contractible and dividing sets featuring regions which are
disconnected from the boundary are shown to be homotopy equivalent to
convolutions of dividing sets which are connected to the boundary.

\subsection{Decompositions of contact categories}\label{decompsec}
The contact categories $\Ko(\Sigma)$ consist of non-interacting subcategories
$\Ko^n(\Sigma, m)$. Each subcategory is determined by fixing some points $m$ on each
boundary component and the Euler number $n = \spc(\ga)$ of the dividing sets $\ga$ on $\Sigma$.

\subsubsection{Euler decomposition}\label{eulerdecompsec}
If $(\Sigma\times [0,1),\xi)$ is a contact 3-manifold and $e(\xi)$ is the Euler class of $\xi$ then the Euler number of $\xi$ is  $\spc(\xi) = \inp{e(\xi),[\Si]}.$ This number can be computed from the dividing set $\ga \subset \Si$.

\begin{defn}\label{eudef}
  If $\ga$ is a dividing set on an orientable surface $\Sigma$ then the {\em
    Euler number} $\spc(\ga)$ of $\ga$ is the Euler characteristic of the
  positive region minus the Euler characteristic of the negative region:
$$\spc(\ga) = \chi(\Sp) - \chi(\Sm).$$
\end{defn}

The proposition below shows that this is a reasonable thing to consider. 

\begin{prop}
The Euler number satisfies the following properties:
\begin{enumerate}
\item If two dividing sets are isotopic then the corresponding Euler numbers are equal:
$$\ga \simeq \ga' \conj{ implies that } \spc(\ga) = \spc(\ga').$$
\item If $\theta : \ga \to \ga'$ is a bypass move then the Euler numbers of $\ga$ and $\ga'$ must be equal.
\end{enumerate}
\end{prop}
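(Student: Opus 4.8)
The plan is to handle the two parts separately: (1) is a soft consequence of the topological invariance of the Euler characteristic, while (2) reduces to a finite local computation once one notes that a bypass move is supported in a disk.

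For part (1), I would invoke the isotopy extension theorem. An isotopy from $\ga$ to $\ga'$ through dividing sets (Definition \ref{isodef}) extends to an ambient isotopy $\{\varphi_t\}_{t\in[0,1]}$ of $\Si$ with $\varphi_0=\mathrm{id}_\Si$ and $\varphi_1(\ga)=\ga'$. Since the transverse co-orientation of $\ga$ pointing into its positive region varies continuously in $t$, the positive region cannot jump, so the diffeomorphism $\varphi_1$ carries $R_+(\ga)$ onto $R_+(\ga')$ and $R_-(\ga)$ onto $R_-(\ga')$. Diffeomorphisms preserve the Euler characteristic, so $\chi(R_\pm(\ga))=\chi(R_\pm(\ga'))$, and hence $\spc(\ga)=\spc(\ga')$ by Definition \ref{eudef}.

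For part (2), the key observation is that a bypass move $\th:\ga\to\ga'$ is a modification supported in a disk. By Definition \ref{bypassattdef}, there is a disk $D\subset\Si$ --- a regular neighborhood of the equatorial arc $\ell$ of the bypass disk, which is the footprint of $N(T)$ --- outside of which $\ga$ and $\ga'$ literally agree; in particular the induced $\pm$-labelling of $\partial D\setminus\ga=\partial D\setminus\ga'$ is the same for the two. Writing $\Si=\Si_0\cup D$ with $\Si_0=\overline{\Si\setminus D}$ and $\Si_0\cap D=\partial D$, additivity of the Euler characteristic gives
$$\chi(R_\pm(\ga))=\chi(R_\pm(\ga)\cap\Si_0)+\chi(R_\pm(\ga)\cap D)-\chi(R_\pm(\ga)\cap\partial D),$$
and the first and last terms are unchanged when $\ga$ is replaced by $\ga'$. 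Consequently $\spc(\ga)-\spc(\ga')=\delta(\ga)-\delta(\ga')$, where $\delta(\ga):=\chi(R_+(\ga)\cap D)-\chi(R_-(\ga)\cap D)$, and the proposition reduces to the assertion $\delta(\ga)=\delta(\ga')$.

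It then remains to carry out the local check. Inside $D$ the dividing set consists of three properly embedded arcs with the same six endpoints on $\partial D$ before and after the move, and three arcs cut the disk $D$ into four complementary regions, each itself a disk; therefore $\chi(R_+(\ga)\cap D)$ and $\chi(R_-(\ga)\cap D)$ are simply the numbers of positive and of negative local regions. Reading off the standard local model of the bypass move (Definition \ref{bypassattdef} and the picture following Remark \ref{smoothingrem}), both the ``before'' configuration (three parallel strands) and the ``after'' configuration have four regions carrying an alternating sign pattern with two regions of each sign, so $\delta(\ga)=0=\delta(\ga')$, which completes the argument. The main obstacle I anticipate is bookkeeping in this last step: extracting the clean statement ``supported in a disk $D$ with unchanged boundary labelling'' from the somewhat elaborate regluing recipe of Definition \ref{bypassattdef}, and then correctly enumerating the complementary regions of the local model and their signs. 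As a contact-geometric alternative one could note that a bypass move realizes a bypass attachment performed inside a ball in $\Si\times[0,1]$, so the evaluation of the Euler class on $[\Si]$ is unaffected; but the combinatorial argument above is self-contained.
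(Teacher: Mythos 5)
Your proof is correct and follows essentially the same route as the paper: part (1) by noting an isotopy of dividing sets carries positive/negative regions to positive/negative regions, and part (2) by restricting to a disk $D$ containing the bypass move and applying inclusion--exclusion, with the local count (two $+$ regions, two $-$ regions, unchanged by the move) matching the paper's observation that $R_\pm\cap B$ is two disks and $R_\pm\cap\partial B$ is three intervals for both $\ga$ and $\ga'$. The only cosmetic difference is that you spell out the isotopy-extension step and the inclusion--exclusion bookkeeping a bit more explicitly than the paper does.
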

\begin{proof}
  The first statement follows from the observation that $\ga \simeq \ga'$ implies that $\Sp \simeq \Sp'$ and $\Sm \simeq \Sm'$. 

The second statement follows from computing each Euler characteristic as a union of the region in which the bypass move is performed and its complement. Suppose that $B\subset \Si$ is a small ball containing the bypass moves. If $X_\pm = R_\pm \backslash B$ and $Y_\pm = R_\pm\cap B$ then $Y_\pm$ is homeomorphic to the disjoint union of two disks and $X_\pm \cap Y_\pm$ is homeomorphic to the disjoint union of three intervals. See the illustration following Definition \ref{bypassattdef}.
\end{proof}

\begin{remark}
  If $\ga$ is a dividing set on a surface $(\Si_{g,1},2)$ of genus $g$ with one boundary component and two points on the boundary then $\chi(R_+\cap R_-) = 1$ because $\ga$ consists of a disjoint union of circles and one interval connecting the two points which are fixed on the boundary. So
$2-2g = \chi(R_+) + \chi(R_-)$. If $\spc(\ga) = 2(g-k)$ then $\chi(R_+) = 1-k$ and $\chi(R_-) = 1-l$ where $k+l=2g$ for $0\leq k \leq 2g$.
\end{remark}

Since the pre-formal contact category $\PKoSm$ in Definition \ref{pkosdef} is generated by bypass moves, the proposition above is equivalent to the statement that the Euler number yields a well-defined map: $\spc : \Ob(\PKoSm) \to \ZZ$ which determines a decomposition:
$$\PKoSm \cong \coprod_{n\in\ZZ} \PKoSmn$$
in which $\PKoSmn$ is the full subcategory of $\PKoSm$ such that $\spc(\ga) = n$ for all $\ga \in \Ob(\PKoSmn)$. The theorem below shows that this decomposition extends to the formal contact category $\KoSm$.

\begin{theorem}\label{splitthm}
The formal contact category $\KoSm$ splits into a product of categories $\KoSmn$:
$$\KoSm \cong \prod_{n\in\ZZ} \Ko^n(\Sigma,m)$$
where $\KoSmn$ is the full subcategory of $\KoSm$ with objects that satisfy
$\spc(\ga) = n$.
\end{theorem}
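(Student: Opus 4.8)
The plan is to trace the coproduct decomposition $\PKoSm\cong\coprod_{n\in\ZZ}\PKoSmn$, already available from the discussion preceding the statement, through the two constructions used to build $\KoSm$ out of $\PKoSm$ — the Postnikov localization along $\Xi$ and the pretriangulated hull — and then to invoke Proposition \ref{ptmultprop} to turn the resulting coproduct into a product.

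First I would check that the localizing data respects the decomposition. By the proposition on the Euler number, $\spc$ is constant on isotopy classes of dividing sets and unchanged by bypass moves, so every generating morphism of $\PKoSm$ preserves $\spc$ and there are no morphisms between distinct summands $\PKoSmn$. Hence each bypass triangle $\tilde\theta\co\Tp\to\PKoSm$ of Proposition \ref{triprop} factors through the single summand $\PKoSmn$ with $n=\spc(\ga)$, and the fold functor $\Xi\co\coprod_\theta\Tp\to\PKoSm$ splits as $\coprod_{n\in\ZZ}\Xi_n$ with $\Xi_n\co\coprod_{\spc(\theta)=n}\Tp\to\PKoSmn$; the cofibrant replacement $\coprod_\theta\Tp\hookrightarrow\coprod_\theta\cDI$ splits compatibly. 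Working with the freely generated cofibrant model of $\PKoSm$ from Remark \ref{cofibrem}, the Postnikov localization $L_\Xi\PKoSm$ is then an honest pushout of a diagram that itself decomposes as a coproduct over $n$ of the diagrams $\coprod_{\spc(\theta)=n}\cDI\leftarrow\coprod_{\spc(\theta)=n}\Tp\xto{\Xi_n}\PKoSmn$. Since a coproduct of pushout squares is a pushout square, $L_\Xi\PKoSm\cong\coprod_{n\in\ZZ}L_{\Xi_n}\PKoSmn$, and taking pretriangulated hulls and applying Proposition \ref{ptmultprop} gives
$$\KoSm=(L_\Xi\PKoSm)^\pt\cong\Big(\coprod_{n\in\ZZ}L_{\Xi_n}\PKoSmn\Big)^{\!\pt}\cong\prod_{n\in\ZZ}\big(L_{\Xi_n}\PKoSmn\big)^\pt=\prod_{n\in\ZZ}\KoSmn.$$
Finally I would identify the $n$-th factor with the full subcategory of $\KoSm$ on twisted complexes built from dividing sets of Euler number $n$: extending $\spc$ to objects of the hull by assigning the common value $n$ to a twisted complex of $\spc$-homogeneous summands, one sees this subcategory meets no other factor, which pins down the description of $\KoSmn$ used in the statement.

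The step I expect to be the real obstacle is the last one, the passage from coproduct to product. Proposition \ref{ptmultprop} is stated for a binary coproduct of $k$-linear categories, whereas here one needs it for the family $\{L_{\Xi_n}\PKoSmn\}_{n\in\ZZ}$, whose members are honest dg categories, no longer concentrated in degree zero. What must be observed is that the proof of Proposition \ref{ptmultprop} uses nothing beyond the absence of morphisms between distinct summands: a one-sided twisted complex over $\coprod_{n\in\ZZ}L_{\Xi_n}\PKoSmn$ is a finite direct sum, hence splits as a direct sum of twisted complexes each supported in a single factor, and morphism matrices between such complexes are block diagonal; this exhibits the projection functors as satisfying the universal property of the product of dg categories. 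For a surface with boundary only finitely many values of $n$ occur, since $\spc$ is bounded there (as in the Remark following the Euler decomposition), so the product is finite and the point is purely formal; for a closed surface the family is infinite, but the same argument applies verbatim. The residual bookkeeping — that all the pushouts above may be formed strictly, so no homotopy-coherence issues intervene — is supplied by the cofibrant model of Remark \ref{cofibrem}.
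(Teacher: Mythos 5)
Your proof follows essentially the same route as the paper: split $\Xi$ along the Euler number, observe that the Postnikov localization (as a pushout on the cofibrant model) distributes over the coproduct, and then pass the resulting coproduct through the pretriangulated hull via Proposition~\ref{ptmultprop}. Your added remark — that Proposition~\ref{ptmultprop}, though stated for a binary coproduct of $k$-linear categories, in fact only uses the absence of cross-morphisms and therefore applies to the (possibly infinite, genuinely dg) family $\{L_{\Xi_n}\PKoSmn\}_{n\in\ZZ}$ — is a point the paper leaves implicit, and you are right that it is needed and that the proof of Proposition~\ref{ptmultprop} supplies it.
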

\begin{proof}
By the proposition above, $\Xi : \coprod \Tp \to \PKoSm$ splits into a union $\Xi = \coprod_n \Xi_n$ where
$\Xi_n : \coprod \Tp \to \PKoSmn$ corresponds to the bypass triangles contained in $\PKoSmn$. The localization functor $Q : \PKoSm \to L_{\Xi}\PKoSm$ splits into a union of localizations:
$$\PKoSm \cong \coprod_n \PKoSmn \to L_{\Xi} \coprod_n \PKoSmn \cong \coprod_n L_{\Xi_n} \PKoSmn.$$
The theorem follows from Proposition \ref{ptmultprop}.
\end{proof}

\subsection{Dualities of contact categories}\label{dualitiessec}
Two forms of duality are introduced, corresponding to switching the
labellings of the regions and the ambient orientation of the surface
respectively.

\subsubsection{Euler duality}\label{eulerdualsec}
Definition \ref{dualdef} introduced an operation $\ga \mapsto \ga^\vee$ on
dividing sets which exchanged the positive and negative regions: $\Sp \leftrightarrow \Sm$. This
reverses the sign of the Euler number: $\spc(\ga^\vee) = -\spc(\ga)$.
Here this operation is extended to an involution 
$$-^\vee : \KoSm\to\KoSm$$
of the formal contact category which exchanges $\Ko^n(\Si,m)$ and $\Ko^{-n}(\Si,m)$ from Theorem \ref{splitthm}.

\begin{prop}\label{dualityfunprop}
The Euler duality map on dividing sets: $-^\vee : \Ob(\PKo^n(\Si,m))\to\Ob(\PKo(\Si,m))$ extends to an involution of dg categories:
$$-^\vee : \Ko^{n}(\Si,m) \to \Ko^{-n}(\Si,m) \conj{ and } (-^\vee)^\vee\cong 1.$$
\end{prop}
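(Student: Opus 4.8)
The plan is to show that the combinatorial duality $\ga \mapsto \ga^\vee$, which swaps $\Sp$ and $\Sm$, is compatible with bypass moves and with the relations (1), (2) defining $\PKoSm$, so that it first produces a dg functor on the pre-formal contact category, and then descends through the Postnikov localization and the pretriangulated hull by their universal properties. Concretely, I would first check that if $\th=(T,\ga,\ga')$ is an orientation-preserving bypass move then there is a naturally associated bypass move $\th^\vee$ between $\ga^\vee$ and $(\ga')^\vee$: the bypass disk $T$ has a dividing arc $\b$ dividing $T$ into a positive and negative part, and reversing the global labelling $\Sp\leftrightarrow\Sm$ together with the labelling inside $T$ produces, after the regluing prescription of Definition \ref{bypassattdef}, exactly the dividing set $(\ga')^\vee$. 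One must be a little careful here: the regluing recipe has an orientation-preserving and an orientation-reversing case, and swapping signs interchanges the roles of the positive and negative faces; I would verify that the net effect on the curves is precisely $\ga\mapsto\ga^\vee$, $\ga'\mapsto(\ga')^\vee$, using that the equator $\ell$ and the points $a<b<c$ are unchanged. This gives an assignment $\th\mapsto\th^\vee$ on generators of $\PKoSm$.

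Next I would check this assignment respects the two relations. A capped bypass (cap $T$ in the NW or SE corner) has its cap region lying entirely in $\Sp$ or in $\Sm$; after applying $-^\vee$ the cap still sits in one region, so $\th^\vee$ is again capped, and a short check of the intercardinal bookkeeping shows an NW/SE cap stays an NW/SE cap, so relation (1) is preserved. For relation (2), disjointness is defined purely in terms of geometric intersection number of equators (Definition \ref{disjdef}), which is unchanged by $-^\vee$; hence disjoint bypasses go to disjoint bypasses and the commutation relation $\th\th'=\th'\th$ is preserved. Since $\PKoSm$ is an ungraded $k$-linear category presented by generators (bypass moves) and these relations, and the differential is trivial, the assignment extends uniquely to a dg functor $-^\vee:\PKoSm\to\PKoSm$; restricting to a fixed Euler number sends $\PKo^n(\Si,m)$ into $\PKo^{-n}(\Si,m)$ because $\spc(\ga^\vee)=-\spc(\ga)$.

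It then remains to pass to $\KoSm=L_\Xi\PKoSm^\pt$. The functor $-^\vee$ carries the family of bypass triangles $\Tp\to\PKoSm$ (the functors $\tilde\th$ of Proposition \ref{triprop}) to bypass triangles again — indeed the triple $(\th_A,\th_B,\th_C)$ attached to $\th$ by that proposition is sent to the triple attached to $\th^\vee$ — so $-^\vee$ intertwines $\Xi$ with itself, and by the universal property of Postnikov localization (Definition \ref{anlocdef}) it descends to $L_\Xi\PKoSm$, hence after applying $(-)^\pt$ to $\KoSm$, with the correct effect on Euler summands by Theorem \ref{splitthm}. Finally, $(-^\vee)^\vee\cong 1$: on objects the double swap of $\Sp$ and $\Sm$ is literally the identity dividing set, and on generating bypass moves $(\th^\vee)^\vee=\th$ by the same local analysis; since $\PKoSm$ is generated by these and the construction of $\KoSm$ is functorial, $(-^\vee)^\vee$ and $\mathrm{id}$ agree as functors on the nose on the pre-formal category and hence are isomorphic as functors on $\KoSm$. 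I expect the only real obstacle to be the first step — carefully verifying that the sign-swap interacts correctly with the asymmetric regluing prescription in Definition \ref{bypassattdef}, in particular that it exchanges the orientation-preserving and orientation-reversing cases in just the right way so that the output is $(\ga')^\vee$ rather than some other reglued curve; once that local computation is pinned down, everything else is formal.
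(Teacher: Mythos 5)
Your proposal is correct and follows essentially the same route as the paper's own proof: define $\th^\vee$ on generating bypass moves by swapping regions, verify compatibility with relations (1) and (2), then descend through the Postnikov localization and pretriangulated hull by their universal properties, with the involution and Euler-number sign-flip falling out of uniqueness. The paper's proof is considerably terser (it simply asserts that the regions of $\ga'$ are determined by those of $\ga$ and that the assignment ``takes triangles to triangles''); your fuller check of the cap and disjointness relations, and your identification of the regluing prescription as the point requiring care, fill in steps the paper leaves implicit. One small clarification worth noting: the sign-swap does not in fact exchange the orientation-preserving and orientation-reversing cases of Definition \ref{bypassattdef}, since the orientation of the bypass disk and the designation of positive/negative \emph{faces} of $N(T)$ depend only on the geometry of $T$ and the ambient orientation of $\Si\times[0,1]$, not on the $R_\pm$ labelling; the regluing is labelling-independent, which is exactly why $\th\mapsto\th^\vee$ is well-defined.
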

\begin{proof}
If $\ga$ is a dividing set on $\Si$ then for any bypass move $\theta : \ga \to \ga'$ the positive and negative regions of $\ga$ determine positive and negative regions of $\ga'$; see the illustration after Definition \ref{bypassattdef}. Therefore, on the generators $\theta$ of $\PKo^n(\Si,m)$: 
$$\theta : \ga \to\ga'\conj{ $\mapsto$ } \theta^\vee : \ga^\vee \to \ga'^\vee.$$
This extends to an involution of $\PKo(\Si,m)$ which takes triangles to triangles and so descends to a functor: $-^\vee : \Ko^n(\Si,m)\to Ko^{-n}(\Si,m)$. The uniqueness of this extension implies the relation $(-^\vee)^\vee\cong 1$. The map $-^\vee$ is an equivalence as it is its own inverse.
\end{proof}

\subsubsection{Orientation reversal}\label{orrevsec}

The formal contact category $\Ko(\bar{\Si})$ of a surface with reversed orientation is identified with the opposite formal contact category $\Ko(\Si)^\op$ of the surface.

\begin{proposition}\label{orrevprop}
There is an equivalence of formal contact categories,
$$\Ko^n(\Sigma,m)^\op \xto{\sim} \Ko^{n}(\bar{\Si},m).$$
\end{proposition}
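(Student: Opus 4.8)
The plan is to construct an explicit functor on the pre-formal level that reverses the orientation of the ambient thickened surface, check it respects the defining relations $(1)$ and $(2)$ and carries bypass triangles to bypass triangles, and then invoke the universal property of the Postnikov localization and the pretriangulated hull to obtain the desired equivalence.

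First I would set up the geometric correspondence. Reversing the orientation of $\Si$ produces $\bar\Si$; a dividing set $\ga$ on $\Si$ is the same multicurve on $\bar\Si$, so we get a bijection on objects $\Ob(\PKo^n(\Si,m)) \to \Ob(\PKo^n(\bar\Si,m))$ — note that the Euler number $\spc(\ga) = \chi(\Sp) - \chi(\Sm)$ is unchanged because reversing the ambient orientation does not swap the positive and negative regions (that is the content of Euler duality, which is a different operation). The key point is what happens to bypass moves: a bypass disk $(T,\ell) \subset (\Si\times[0,1], \Si\times 0)$ that is orientation \emph{preserving} for the product orientation on $\Si\times[0,1]$ becomes orientation \emph{reversing} once we pass to $\bar\Si\times[0,1]$, since the normal direction $n$ is unchanged but the orientation of $\Si$ has flipped, reversing the sign of $(\ell, \ell\times n, n)$ relative to the ambient orientation. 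Definition \ref{bypassattdef} gives two different regluing prescriptions in cases (1) and (2) according to whether the disk is orientation preserving; comparing these two prescriptions shows precisely that the orientation-preserving bypass move $\th\colon\ga\to\ga'$ on $\Si$, read on $\bar\Si$, is the orientation-preserving bypass move $\bar\th\colon\ga'\to\ga$ in the \emph{reverse} direction — i.e. source and target are swapped. This is exactly the behavior of passing to the opposite category, so one obtains a functor $F\colon \PKo^n(\Si,m) \to \PKo^n(\bar\Si,m)^\op$ on generators, sending each bypass move to the corresponding one with reversed orientation datum.

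Next I would check that $F$ descends through the relations and the localization. Relation $(1)$: a capped bypass is characterized by the geometry of the equator $\ell$ and its embedded half-disk cap, and the northwest/southeast versus northeast/southwest dichotomy is interchanged under the orientation reversal, but so is the source/target, so the equations $\th = 1$ on $\Si$ become the equations $\bar\th = 1$ on $\bar\Si$ after taking opposites — one must track carefully that "northwest cap equals identity" transports to the correct statement. Relation $(2)$: disjointness of bypass moves is defined purely by geometric intersection number zero of the equators, which is orientation-independent, and commutation relations are self-opposite, so $(2)$ is preserved. Finally, Proposition \ref{triprop} associates to each bypass move $\th$ on $\Si$ a triangle $\Tp \to \PKoS$ with vertices $\ga_A, \ga_B, \ga_C$ and edges the three successive bypass moves; under $F$ this maps to the same cyclic triangle of bypass moves on $\bar\Si$ traversed in the opposite cyclic order, which is again (the image under $F$ of) a bypass triangle on $\bar\Si$. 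Since the opposite of a pretriangulated dg category is pretriangulated and $(\eC^\pt)^\op \cong (\eC^\op)^\pt$, and the opposite of a Postnikov localization $L_S\eC$ along triangles is the Postnikov localization of $\eC^\op$ along the opposite triangles, the functor $F$ induces the claimed functor $\Ko^n(\Si,m)^\op \to \Ko^n(\bar\Si,m)$ in $\Hqe$. Applying the same construction to $\bar\Si$ (whose orientation reversal is $\Si$) gives an inverse up to the canonical identification, so $F$ is an equivalence.

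The main obstacle, I expect, is the bookkeeping in the second step: verifying at the level of the explicit regluing prescriptions in Definition \ref{bypassattdef} that an orientation-preserving bypass move on $\Si$ really does correspond, on $\bar\Si$, to an orientation-preserving bypass move in the reversed direction (rather than, say, to an orientation-reversing one, or to the same-direction move with regions swapped). This requires carefully matching the roles of the positive and negative faces of $N(T)$, the points $a < x < b < y < c$, and the three line segments $\ga_a, \ga_b, \ga_c$ under the orientation flip, and confirming it is consistent with the chosen smoothing convention of Remark \ref{smoothingrem}. Once that geometric fact is pinned down, everything else is a formal consequence of the universal properties established in Section \ref{algsec} together with the elementary compatibilities of $(-)^\op$ with $(-)^\pt$ and with Postnikov localization.
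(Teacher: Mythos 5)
The load-bearing geometric claim in your argument---that reversing the orientation of the surface (equivalently, of the equator of each bypass disk) turns a bypass move $\th\colon\ga\to\ga'$ into a bypass move $\bar\th\colon\ga'\to\ga$ with source and target swapped---is not correct, and you yourself flag this step as the main obstacle without resolving it. What actually happens, and what the paper's proof uses, is that the orientation reversal \emph{fixes the source and changes the sink}: if $\th_A\colon\ga_A\to\ga_B$ generates the bypass triangle $\ga_A\to\ga_B\to\ga_C\to\ga_A$ via Proposition \ref{triprop}, then $\bar\th_A\colon\ga_A\to\ga_C$. This is forced by Definition \ref{bypassattdef}: a bypass disk on $\ga$ together with the regluing prescription always takes $\ga$ as input and produces a new dividing set as output, regardless of which of cases (1) or (2) applies; the construction has no mechanism by which the output could become the input. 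Reversing the equator orientation merely exchanges the labels $a\leftrightarrow c$ and swaps the positive and negative faces, which selects the other regluing and lands on the third vertex $\ga_C$ of the triangle rather than on $\ga_B$. Consequently $\bar\th$ does not have the domain and codomain needed for your proposed assignment $\th^\op\mapsto\bar\th$ to define a functor $\PKo^n(\Si,m)^\op\to\PKo^n(\bar\Si,m)$.

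The paper's fix is to send $\th^\op$ not to $\bar\th$ but to $\bar{\th'}$, where $\th'\colon\ga_B\to\ga_C$ is the \emph{next} bypass move in the triangle, so that $\bar{\th'}\colon\ga_B\to\ga_A$ has the correct source and target. One then checks, exactly as you outline, that this assignment preserves the cap relations, disjointness, and bypass triangles (the relations $\th^\op\mapsto\bar{\th'}$, $(\th')^\op\mapsto\bar{\th''}$, $(\th'')^\op\mapsto\bar\th$ close up cyclically), and hence descends through the Postnikov localization and the pretriangulated hull; running the same construction on $\bar\Si$ produces an inverse. Your heuristic---that a contact structure on $\Si\times[0,1]$ from $\ga$ to $\ga'$ is, upon turning the cylinder upside down, a contact structure from $\ga'$ to $\ga$ on $\bar\Si\times[0,1]$---is sound, but the bypass disk realizing that reversed contact structure is a disk attached to $\ga'$, not the equator-reversal of the original disk attached to $\ga$; the paper's $\bar{\th'}$ encodes precisely that disk.
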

\begin{proof}
  It is a consequence Definition \ref{bypassattdef} that reversing the orientation of
  the surface is equivalent to reversing the orientation of each bypass
  half-disk or equator. It suffices to analyze the correspondence between bypass
  triangles. In the eyeglass-shaped diagram below, reversing the orientation of each bypass
  disk, $\theta \mapsto \bar{\theta}$ in a triangle fixes the source and changes the sink of each map.
\begin{center}
\begin{tikzpicture}[scale=10, node distance=2.5cm]
\node (A2) {\CPic{one2}};
\node (B2) [right=1cm of A2] {};
\node (C2) [right=1cm of B2] {\CPic{two2}};
\node (D2) [below=1.41421356cm of B2] {\CPic{three2}};
\draw[->, bend left=35] (A2) to node {$\theta$} (C2);
\draw[->, bend left=35] (C2) to node {$\theta'$} (D2);
\draw[->, bend left=35] (D2) to node {$\theta''$} (A2);

\node (ZZ) [right=3.5cm of B2] {};

\node (A3) [right=1cm of ZZ] {\CPic{one2}};
\node (B3) [right=1cm of A3] {};
\node (C3) [right=1cm of B3] {\CPic{two2}};
\node (D3) [below=1.41421356cm of B3] {\CPic{three2}};
\draw[->, bend right=35] (A3) to node [swap] {$\bar{\theta}$} (D3);
\draw[->, bend right=35] (D3) to node [swap] {$\bar{\theta''}$} (C3);
\draw[->, bend right=35] (C3) to node [swap] {$\bar{\theta'}$} (A3);

\draw[|->, bend left=35] (C2) to node {$\bar{\cdot}$} (A3);
\end{tikzpicture}
\end{center}
Reversing the arrows on the lefthand side of the diagram produces the bypass
triangle for $\Ko^n(\Si,m)^\op$. The assignment $\ga\mapsto \ga$ on objects
and $\th^\op \mapsto \bar{\th'}$ on maps determines a functor
$\bar{\cdot} : \PKo^n(\Si,m)^\op \to \PKo^{n}(\bar{\Si},m)$ because it
preserves the cap relations and disjoint unions.  Moreover, the relation
$\th^\op \mapsto \bar{\th'}$ implies that
$(\theta')^\op \mapsto \bar{\theta''}$ and
$(\theta'')^\op \mapsto \bar{\theta}$ so that triangles are mapped to
triangles and the functor $\bar{\cdot}$ descends to a map between formal
contact categories. By applying the same construction to the surface after
reversing its the orientation again, one obtains an inverse functor and so
the functor $\bar{\cdot}$, introduced above, is an isomorphism of formal
contact categories.
\end{proof}

\subsection{Relations for overtwisted contact structures}\label{overtwistedsec}
A theorem of E. Giroux \cite{G} states that a contact structure on
$\Sigma \times [0,1]$, when $\Sigma \ne S^2$, is overtwisted if and only if
its dividing set contains no homotopically trivial closed curves. When
$\Sigma = S^2$, a contact structure is overtwisted if and only if the
dividing set contains any two such curves. Corollary \ref{tightcor} states
that E. Giroux's criterion is satisfied for surfaces with boundary. The
surface $\Si$ is assumed to be connected in this section.

The lemma below shows that the local relations can be applied to parts of
more complicated dividing sets.

\begin{lemma}{(Local relations)}\label{locrellem}
  Suppose that $R$ and $\Si$ are orientable surfaces and $R \subset
  \Si$.
  Then a distinguished triangle in $\Ho(\Ko(R))$ yields a distinguished
  triangle in $\Ho(\Ko(\Si))$.
\end{lemma}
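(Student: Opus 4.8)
The plan is to show that the inclusion $R \hookrightarrow \Si$ induces a functor $\Ko(R) \to \Ko(\Si)$ in the homotopy category of dg categories, and that such a functor is exact, so that it carries distinguished triangles to distinguished triangles. The key observation is that a dividing set $\ga$ on $R$ extends to a dividing set on $\Si$ by the identity outside of $R$; more precisely, after choosing boundary data on $\partial\Si$, any bypass move with bypass disk supported in $R \times [0,1]$ is literally a bypass move on the extended dividing set in $\Si$. Thus one gets a map on the ``pre-pre'' level which descends to a functor $\iota_* \co \PKo(R) \to \PKo(\Si)$ since the cap relations $(1)$ and the disjointness relations $(2)$ are local and hence preserved.

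First I would make the extension-of-dividing-sets assignment precise: fix once and for all an auxiliary dividing set on $\Si \setminus \mathring{R}$ compatible with whatever is prescribed on $\partial R \cap \mathring{\Si}$ and on $\partial\Si$, and send $\ga \mapsto \ga \cup (\text{fixed part})$. Bypass moves supported in $R$ map to bypass moves in $\Si$ by Definition \ref{bypassattdef}, since a bypass disk $T \subset R\times[0,1]$ is also a bypass disk in $\Si\times[0,1]$ and the regluing prescription only sees a neighborhood of $T$. Then I would invoke Proposition \ref{triprop}: the bypass triangle $\tilde\theta \co \Tp \to \PKo(R)$ associated to a bypass move $\theta$ in $R$ is carried by $\iota_*$ to the bypass triangle $\widetilde{\iota_*\theta} \co \Tp \to \PKo(\Si)$ in $\Si$, because both are determined by the same local picture and the choices in Proposition \ref{triprop} are unique up to isotopy. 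Hence $\iota_* \circ \Xi_R$ factors through $\Xi_\Si$, and by the universal property of Postnikov localization (Definition \ref{anlocdef}) together with functoriality of the pretriangulated hull (Proposition \ref{pretriprop}), $\iota_*$ induces a functor $\Ko(R) = L_{\Xi_R}\PKo(R)^\pt \to \Ko(\Si) = L_{\Xi_\Si}\PKo(\Si)^\pt$ in $\Hqe$, which we still denote $\iota_*$.

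Next I would check that $\Ho(\iota_*)$ is exact. Both $\Ho(\Ko(R))$ and $\Ho(\Ko(\Si))$ are triangulated by construction, and $\iota_*$ is a functor of pretriangulated dg categories; the induced functor on homotopy categories of a dg functor between pretriangulated dg categories automatically commutes with shifts and sends the cone of a morphism to the cone of its image (Remark \ref{conerem}), hence takes distinguished triangles to distinguished triangles. Given a distinguished triangle in $\Ho(\Ko(R))$, by Proposition \ref{prestriprop} it is, up to isomorphism, the image of a bypass triangle, i.e. of the form $S(1) \to S(2) \to C(S(1\to 2)) \to S(1)[1]$; applying $\Ho(\iota_*)$ yields $\iota_*S(1) \to \iota_*S(2) \to C(\iota_*S(1\to 2)) \to \iota_*S(1)[1]$, which is distinguished in $\Ho(\Ko(\Si))$. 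This gives the lemma.

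The main obstacle I anticipate is the bookkeeping around boundary conditions: the definition of $\Ko(\Si)$ really depends on the tuple $(\Si,m)$ with points fixed on each boundary component, and $R$ is only an abstract subsurface, so one must verify that the curves of $\ga$ meeting $\partial R \cap \mathring\Si$ can be consistently matched to the fixed data on $\Si$, that the intersection-number-positivity condition in Definition \ref{divsetdef} still holds for the extended dividing set, and that the Euler-number grading (Theorem \ref{splitthm}) is respected so that the functor restricts correctly to each $\Ko^n$. Once one fixes a single compatible filler on $\Si\setminus\mathring R$ these are routine, and, as in Remark \ref{smoothingrem}, different choices yield equivalent functors; but this is the step that requires the most care to state correctly.
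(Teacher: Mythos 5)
Your proof takes essentially the same route as the paper's, which consists of exactly the two observations that the embedding $R \subset \Si$ induces a functor $i : \PKo(R) \hookrightarrow \PKo(\Si)$ and that bypass triangles are carried to bypass triangles; you correctly fill in the steps the paper leaves implicit (fixing a filler on $\Si\setminus \mathring{R}$, lifting through the Postnikov localization by the universal property, applying $-^\pt$, and noting that a dg functor between pretriangulated dg categories preserves cones and shifts). One small slip in your final paragraph: Proposition \ref{prestriprop} does \emph{not} say that every distinguished triangle in $\Ho(\Ko(R))$ is isomorphic to the image of a bypass triangle (it only says that the third object of a bypass triangle becomes a cone after localization), but this claim is both false in general and unnecessary, since the exactness of $\Ho(\iota_*)$ established in your preceding paragraph already handles arbitrary distinguished triangles.
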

\begin{proof}
  The embedding $R\subset \Si$ determines a functor
  $i : \PKo(R)\hookrightarrow \PKo(\Si)$. A bypass triangle
  $\tilde{\theta} : \Tp \to \PKo(R)$ determines a bypass triangle
  $\Tp \to \PKo(\Si)$ after composing with $i$.
\end{proof}

\begin{defn}\label{nsonedef}
  If $\ga$ is a dividing set then we write $S^1\subset \ga$ when $\ga$
  contains a homotopically trivial closed curve. All such curves are
  isotopic when $\Si$ is connected. If $\ga$ contains any collection of
  $n\in\ZZ_+$ such curves then we write $nS^1\subset \ga$.
\end{defn}

\begin{proposition}\label{otprop}
The object represented by the dividing set pictured below is contractible.
\begin{center}\CPPic{pawn}\hspace{-.75cm} $\cong$ \hspace{.4cm} $0$\end{center}
\end{proposition}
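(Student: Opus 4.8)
The plan is to realize the depicted dividing set $\ga$ as the cone of an identity map inside a bypass triangle; since the cone of an isomorphism vanishes in any triangulated category, this gives $\ga\cong 0$ in $\Ho(\KoS)$.

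First I would localize. By Lemma \ref{locrellem} it is enough to work in a disk neighbourhood $R$ containing the configuration shown — the homotopically trivial circle $C_0$ of $\ga$ together with the arc meeting it — and to produce the required distinguished triangle in $\Ho(\Ko(R))$; contractibility there then propagates to $\Ho(\KoS)$. Inside $R$, the picture $\ga$ is the result of ``pinching off'' $C_0$ from a trivial bump on the remaining arc. Concretely, let $\ga_A$ be the dividing set on $R$ obtained from $\ga$ by replacing $C_0$ with a small bump of the arc, positioned so as to cross the equatorial arc $\ell$ of a bypass disk $T$ at the outer two of its three intersection points $a<b<c$, and let $\theta_A=(T,\ga_A,\ga_B)$ be the resulting bypass move. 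Its associated bypass triangle (Proposition \ref{triprop})
$$\ga_A\xto{\theta_A}\ga_B\xto{\theta_B}\ga_C\xto{\theta_C}\ga_A$$
has, after tracing the regluing recipe of Definition \ref{bypassattdef}, third vertex $\ga_C$ isotopic to $\ga$; and $\ga_A\simeq\ga_B$ as multicurves since the bump is inessential, so $\ga_A=\ga_B$ as objects of $\PKo(R)$.

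The key point is that $\theta_A$ is a \emph{capped} bypass move in the sense of Definition \ref{divdiskdef}: the sub-arc $[a,b]\subset\ell$ together with the bump bounds an embedded half-disk $(T', T'\setminus\rho)\to(R,\ga_A)$ with $\rho=[a,b]$. Placing the bump on the appropriate side of $\ell$ puts this cap in the northwest corner (or, after applying the Euler duality $-^\vee$ of Proposition \ref{dualityfunprop}, which preserves contractibility, to interchange the two regions across $C_0$, the southeast corner); the two cases are the two pawns, with $+$ or with $-$ inside $C_0$. By relation (1) of Definition \ref{pkosdef}, a northwest- or southeast-capped bypass is the identity, so $\theta_A=1_{\ga_A}$ in $\PKo(R)$.

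To conclude: in $\Ko(R)=L_{\Xi}\PKo(R)^{\pt}$ (Definition \ref{formaldef}) the triangle above is distinguished — it is created by the Postnikov localization and survives in the hull by Proposition \ref{prestriprop} — and now reads $\ga_A\xto{1}\ga_A\to\ga_C\to\ga_A$. Hence $\ga_C\cong 0$ in the triangulated category $\Ho(\Ko(R))$, so $\ga\cong 0$ there, and therefore $\ga\cong 0$ in $\Ho(\KoS)$ by Lemma \ref{locrellem}. The main obstacle is the picture-level bookkeeping of Definition \ref{bypassattdef}: verifying that the third vertex $\ga_C$ is genuinely the pawn, and that the cap of $\theta_A$ lands in the northwest/southeast rather than the northeast/southwest corner — the latter being exactly the configuration one may not invoke here, since the vanishing of northeast/southwest-capped bypasses rests on the same argument.
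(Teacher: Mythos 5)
Your overall strategy --- realize the pawn $\ga$ as the cone of an identity map inside a bypass triangle, so that it vanishes in the triangulated homotopy category --- is the right idea and is the same mechanism the paper uses. However, the particular triangle you construct cannot exist, and the gap lies exactly where you flagged uncertainty.

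You take $\ga_A\simeq\ga_B$ to be the plain arc in $(D^2,2)$ (the circle $C_0$ of $\ga$ ``replaced by an inessential bump'') and $\ga_C$ to be the pawn $\ga$. But bypass moves preserve the Euler number $\spc$ (the proposition following Definition \ref{eudef}), and these two multicurves have different Euler numbers. A plain arc divides $(D^2,2)$ into two disks of opposite sign, so $\spc=1-1=0$. For the pawn, the disk bounded by $C_0$ and the component of $D^2\backslash\ga$ not meeting $C_0$ carry the same sign, while the annulus between them carries the opposite sign; hence $\chi(R_\pm)\in\{2,0\}$ and $\spc(\ga)=\pm2$. In the language of Theorem \ref{splitthm}, the plain arc and the pawn lie in distinct factors $\Ko^{0}(D^2,2)$ and $\Ko^{\pm2}(D^2,2)$, between which there are no morphisms at all, let alone a bypass triangle. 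So the claim that ``$\ga_C$ is isotopic to $\ga$'' while $\ga_A\simeq\ga_B$ is a plain arc is not a bookkeeping subtlety --- it is impossible.

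The paper's proof avoids this by drawing the bypass equator on the pawn itself, so that all three vertices of the resulting bypass triangle are isotopic to the pawn (Euler numbers then agree), and observes that two of the three maps are NW/SE-capped, hence identity by relation $(1)$ of Definition \ref{pkosdef}. From a distinguished triangle $\ga\xto{1}\ga\to\ga\to\ga$ one gets $\ga\cong C(1)\cong 0$, exactly the conclusion you were aiming for. The repair to your argument is therefore to leave $C_0$ in place in $\ga_A$ and $\ga_B$ rather than replacing it with a bump of the arc: you need a triangle whose three vertices are all the pawn.
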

\begin{proof}
  The formal contact category $\Ho(\Ko(D^2,2))$ associated to the disk $D^2$
  with two boundary points contains a bypass move with equator indicated by
  the dashed line below.
$$\CPPic{pawn1}$$
All of the objects in the distinguished triangle associated to the bypass move are isotopic and the first relation in Definition \ref{pkosdef} implies two out of three of the maps are identity. 
\end{proof}

\begin{corollary}\label{tightcor}
\begin{enumerate}
\item If $\Sigma$ is a surface with boundary then for all dividing sets $\ga$ on $\Sigma$,
$$S^1\subset \ga \conj{ implies } \ga \cong 0 \normaltext{  in  } \HoKoS.$$
\item If $\Sigma$ is a closed surface then for all dividing sets $\ga$ on $\Sigma$,
  $$S^1\subset \ga \normaltext{ and } \ga \ne S^1 \conj{ implies } \ga \cong 0 \normaltext{ in } \HoKoS.$$
\end{enumerate}
\end{corollary}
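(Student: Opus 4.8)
The plan is to deduce both parts from Proposition \ref{otprop} together with the local-relations Lemma \ref{locrellem}, by exhibiting the pawn as a piece of $\ga$ inside an embedded disk. Suppose $\ga$ contains a homotopically trivial closed curve and choose an innermost one $c\subset\ga$; it bounds an honest disk $D_c\subset\Si$ with $D_c\cap\ga=c$ (a homotopically trivial curve bounds a disk disjoint from $\partial\Si$, and an innermost such curve exists because $\ga\cap D_c^\circ$ can only consist of further circles). Let $U$ be the component of $\Si\setminus\ga$ meeting $c$ on the side away from $D_c$. The geometric observation that makes everything run is: $\ga$ has a component $\ga_j\ne c$ with $\ga_j\cap\overline U\ne\emptyset$. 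Indeed, if not then $\partial U=c$, the subsurface $\overline U\cup D_c$ has boundary contained in $\partial\Si$, and since $\Si$ is connected this forces $\overline U\cup D_c=\Si$, hence $\ga=\{c\}$ --- impossible in part (1), where a surface with boundary has $m\ge2$ so that $\ga$ has an arc component, and impossible in part (2), where $n\ge2$ trivial circles already give $\ga\ne\{c\}$.

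Granting the observation, I would pick a sub-arc $\d$ in the interior of $\ga_j$, a band $\b$ embedded in $\overline U$ joining $c$ to $\d$ and meeting $\ga$ only along its two attaching arcs, and take $R$ to be a regular neighborhood of $D_c\cup\b\cup\d$. Then $R$ is an embedded disk in $\Si$, the induced boundary data on $\partial R$ consists of two points, and $\ga\cap R$ is, as a dividing set on $(R,2)\cong(D^2,2)$, isotopic to the pawn --- with the circle being $c$ and the boundary-parallel arc being $\d$, on the correct side because the band, hence $D_c$, attaches to one side of $\d$. Now transport: the bypass move used to prove Proposition \ref{otprop} is a functor $\Tp\to\PKo(R)$ whose associated distinguished triangle in $\Ho(\Ko(R))$ has all three vertices equal to the pawn, hence to $0$, and two of its structure maps equal to the identity by relation $(1)$ of Definition \ref{pkosdef}. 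Composing with the inclusion $\PKo(R)\hookrightarrow\PKoS$ and applying Lemma \ref{locrellem} produces a distinguished triangle in $\Ho(\KoS)$ whose three vertices are all isotopic to $\ga$ (nothing outside $R$ is altered, and the three vertices of the $\Ko(R)$-triangle are mutually isotopic) and two of whose maps are still identities, relation $(1)$ being local. A distinguished triangle in a triangulated category containing an isomorphism has a zero vertex, so $\ga\cong0$ in $\HoKoS$, uniformly for both parts.

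The main obstacle is the geometric step of the second paragraph: verifying that $\b$ and $R$ can always be arranged so that $\ga\cap R$ is \emph{exactly} the pawn --- no stray component of $\ga$ enters $R$, the arc $\d$ can be kept in the interior of $\ga_j$ so that $\partial R$ carries the two-point boundary condition, and the region $D_c$ sits on the intended side of $\d$. These are routine cut-and-paste manipulations, but they carry the content of the corollary; by contrast, once $R$ is produced the triangulated bookkeeping (transport of the triangle through Lemma \ref{locrellem}, the cap relation, and the ``isomorphism in a triangle kills a vertex'' conclusion) is mechanical and identical in the two cases. A minor point worth flagging is that Lemma \ref{locrellem} is applied here with $R$ an embedded disk lying in the \emph{interior} of $\Si$; this is precisely what lets part (2), where $\Si$ has no boundary, be handled on exactly the same footing as part (1).
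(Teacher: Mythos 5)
Your proof is correct and follows the same strategy as the paper's: locate the pawn of Proposition \ref{otprop} inside an embedded subdisk $R\subset\Si$ meeting $\ga$ in a circle and an arc, then transport the resulting distinguished triangle to $\Ho(\Ko(\Si))$ via Lemma \ref{locrellem} and conclude $\ga\cong 0$ from the triangle having two identity maps. You supply the geometric details (the innermost-circle/band construction of $R$, and the argument that a second component $\ga_j$ of $\ga$ must exist) that the paper's one-sentence proof leaves implicit, and unlike the paper's proof you explicitly treat the closed-surface case (2) on the same footing.
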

\begin{proof}
  The proposition above applies to surfaces with boundary as they are required to contain properly embedded arcs.
\end{proof}

Without further complicating the main construction this corollary appears to
be optimal: bypass moves do not imply that $S^1\cong 0$ in the disk category
$\Ho(\Ko(D^2,0))$, any such proof would contradict E. Giroux's theorem for
$\Sigma = S^2$.

\begin{corollary}\label{otpropcor}
The relation in Proposition \ref{otprop} above implies that a bypass move is zero in the homotopy category when it is capped in either the northeast or southwest:
$$\CPPic{NEcap} = 0 \conj{ and } \CPPic{SWcap} = 0.$$
\end{corollary}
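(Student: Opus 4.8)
The plan is to read the vanishing off the bypass triangle attached to $\th$, reducing it to the fact about triangulated categories already used to prove Proposition~\ref{otprop}: in a distinguished triangle, an isomorphism among the maps forces one of the objects to be zero. It suffices to treat a move $\th\co\ga\to\ga'$ capped in the northeast corner, the southwest case being entirely analogous (equivalently, it follows by applying the northeast case to $\bar\Sigma$ together with the equivalence of Proposition~\ref{orrevprop}). By Proposition~\ref{triprop} and Definition~\ref{formaldef}, and using Proposition~\ref{prestriprop}, the move $\th$ sits in a bypass triangle
$$\ga \xto{\ \th\ } \ga' \xto{\ \th_B\ } \ga'' \xto{\ \th_C\ } \ga$$
that is a distinguished triangle in $\HoKoS$, where $\th_B=(T_B,\ga',\ga'')$ and $\th_C=(T_C,\ga'',\ga)$ are the two bypass moves obtained by rotating the equator of the bypass disk of $\th$ through its two remaining positions.

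The main step --- and the one I expect to require genuine care --- is the local combinatorial claim that \emph{a northeast cap on $\th$ forces at least one of $\th_B$, $\th_C$ to be a bypass move capped in the northwest or southeast corner}. This is a statement about the dividing set in a neighbourhood of the equator, and is proved by unwinding the reattachment recipe of Definition~\ref{bypassattdef} (with the smoothing convention fixed in Remark~\ref{smoothingrem}): the northeast cap says that an arc of $\ga$ running from $b$ to $c$ cobounds an embedded half-disk with the subarc $[b,c]\subset\ell$, and chasing the three positions of the equator one sees that on one of the two successor configurations the dividing set again cobounds a half-disk with a subarc of the new equator, this time seated in the northwest or southeast corner. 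This is exactly the mechanism visible in the bypass triangle of the ``pawn'' in the proof of Proposition~\ref{otprop}, where two of the three maps become capped and therefore equal to identities.

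Granting the local claim, relation~(1) of Definition~\ref{pkosdef} makes the relevant map an identity, hence an isomorphism in $\HoKoS$. By the rotation argument used in the proof of Proposition~\ref{otprop}, an isomorphism among the maps of a distinguished triangle forces the remaining object to vanish: if $\th_B$ is an isomorphism then $\ga\cong 0$, and if $\th_C$ is an isomorphism then $\ga'\cong 0$. In either case $\th$ is a morphism with zero source or zero target, so $\th=0$ in $\HoKoS$; hence $\CPPic{NEcap}=0$, and $\CPPic{SWcap}=0$ follows in the same way.
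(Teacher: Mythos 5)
Your plan is structurally sound but takes a genuinely different route from the paper, and it leaves the central step unverified. The paper's argument is direct and purely about the \emph{target} of the map: performing a northeast- or southwest-capped bypass closes up the cap arc into a homotopically trivial circle in $\ga'$, so $\ga'\cong 0$ by Lemma~\ref{locrellem} and Proposition~\ref{otprop}, and hence $\th=0$. You instead argue inside the bypass triangle, claiming that a northeast cap on $\th_A$ forces one of the rotated moves $\th_B$, $\th_C$ to acquire a northwest or southeast cap, so that relation~(1) makes it an identity and forces an object of the triangle to vanish. Your triangulated-category bookkeeping at the end (isomorphism among $\th_B$, $\th_C$ forces $\ga\cong 0$ or $\ga'\cong 0$; either way $\th=0$) is correct.

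The gap is the ``local combinatorial claim,'' which you flag as needing care but do not actually prove. It is a different statement from the paper's: the paper observes the appearance of a circle in $\ga'$, whereas you need a cap to appear on $\th_B$ or $\th_C$ after ``chasing the three positions of the equator.'' These are not obviously equivalent. Indeed, the branch ``$\th_B$ is NW/SE-capped'' would force $\ga\cong 0$, a conclusion about the \emph{source} dividing set which the paper does not claim and which has no evident reason to hold for a general $\ga$; so if your claim is true, it is almost certainly true only via $\th_C$ being NW/SE-capped (consistent with $\ga'\cong 0$), and the $\th_B$ branch is a red herring. But even the $\th_C$ branch requires unwinding Definition~\ref{bypassattdef} and the smoothing convention and checking the position of the new cap explicitly, which you do not do. By contrast, the paper's observation that the cap arc closes into a small circle is a one-step local picture. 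So your proposal is a reasonable plan for an alternative proof, but as written it has a genuine hole exactly where the geometric content lives; it would be simpler to follow the paper and observe the circle in $\ga'$ directly.
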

\begin{proof}
The dividing set $\ga'$ resulting from either bypass move $\th : \ga \to \ga'$ must contain a
homotopically trivial curve. So the isomorphism $\ga'\cong 0$ is obtained by applying Lemma \ref{locrellem} and Proposition \ref{otprop}. This implies the relation $\th = 0$ in the homotopy category of the formal contact category.
\end{proof}

\begin{remark}
Two consecutive bypass moves occurring in a bypass triangle are disjoint:
$$\CPPic{consbypass}$$
The second bypass is capped when it is performed before the first, so the commutativity of disjoint bypasses and the corollary above suffice to imply that compositions of consecutive bypass moves must be zero in the homotopy category. 
\end{remark}

\subsection{Dividing sets containing disconnected regions are convolutions}\label{discosec}
Suppose $\ga$ is a dividing set on a surface $\Si$ with boundary and
$\Si\backslash \ga$ contains a connected component $B$ which is disjoint from the boundary of $\Si$.
 Then we will show that $\ga$ is homotopy equivalent to an iterated
cone construction on dividing sets which do not contain a region such as $B$.

\begin{defn}\label{discodef}
  A multicurve $\ga$ on a surface $\Si$ with boundary is {\em boundary disconnected} when there is a connected component $B$ of $\Si\backslash \ga$ which does not touch the boundary:
$$B\subset \Si\backslash\ga \conj{ and } B \cap \partial\Si = \emptyset$$
A dividing set $\ga$ is {\em boundary connected} when it is not boundary disconnected.
\end{defn}

\begin{theorem}\label{surfacetheorem}
  In the homotopy category of the formal contact category $\Ko(\Si,m)$ associated to a
  surface $(\Si,m)$ with boundary, every boundary disconnected dividing set
  $\ga$ is isomorphic to an iterated extension of dividing sets $\ga_i$ which are boundary connected.
\end{theorem}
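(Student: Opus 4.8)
The plan is to induct on the number of connected components of $\Si\backslash\ga$ which are disjoint from $\partial\Si$, and at each step to use a bypass move to "split off" one such region, expressing $\ga$ as a cone. First I would fix a boundary disconnected dividing set $\ga$ with a component $B$ of $\Si\backslash\ga$ disjoint from $\partial\Si$. The boundary $\partial B$ is a union of circles in $\ga$; pick an innermost such circle $C$ (innermost in the sense that one of the two subsurfaces it bounds on one side contains no other curve of $\ga$ touching it, which exists since $\Si$ has boundary and $\ga$ has finitely many components). I would then produce a bypass disk $T$ whose equator $\ell$ meets $\ga$ in three points arranged so that the bypass move $\th:\ga\to\ga'$ does one of two things: either it merges $C$ with a neighboring curve of $\ga$ (pushing $B$ closer to the boundary, strictly decreasing the relevant complexity), or it creates a homotopically trivial circle. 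By Corollary \ref{tightcor}(1), if the target contains a trivial $S^1$ then $\ga'\cong 0$ in $\HoKoS$, and the bypass triangle from Proposition \ref{triprop} (made distinguished in $\KoS$ by Definition \ref{formaldef} and Proposition \ref{prestriprop}) degenerates, forcing $\ga$ to be a shift of another object in the triangle; otherwise the distinguished triangle
\[
\ga \xto{\th} \ga' \to \ga'' \to
\]
exhibits $\ga$ as fitting in a cone built from $\ga'$ and $\ga''$, each of which has strictly smaller complexity (fewer disconnected regions, or the same number of regions with smaller total intersection count against a fixed collection of boundary-connecting arcs). Iterating and unwinding the cones via Remark \ref{conerem} assembles $\ga$ as a one-sided twisted complex $(\oplus_i\ga_i,p)$ with each $\ga_i$ boundary connected and each entry of $p$ a composite of bypass moves; since bypass moves generate all maps in $\PKoS$ and the twisted differential entries have degree $1$, after absorbing the freedom in choosing the complex one arranges the components of $p$ to be (single) bypass moves.

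The key steps, in order: (1) define a complexity function $\kappa(\ga)$ on boundary disconnected dividing sets — for instance the lexicographic pair consisting of the number of disconnected regions, then the minimal geometric intersection number of $\ga$ with a fixed properly embedded arc system cutting $\Si$ into disks — and check $\kappa$ is well-defined on isotopy classes and well-founded; (2) given $\ga$ with $\kappa(\ga)>0$, construct the bypass disk $T$ near an innermost boundary circle of a disconnected region and verify, using the regluing recipe of Definition \ref{bypassattdef}, that the three objects $\ga,\ga',\ga''$ of the resulting triangle each have $\kappa$ strictly smaller than $\kappa(\ga)$ (or are $\cong 0$ by Corollary \ref{tightcor}); (3) invoke the distinguished triangle in $\HoKoS$ and Remark \ref{conerem} to write $\ga$ as the cone of a map between objects of smaller complexity; (4) induct, and flatten the resulting iterated cone into a single twisted complex, using that a cone of twisted complexes is again a twisted complex with differential assembled from the pieces; (5) check the entries of $p$ can be taken to be bypass moves — every map between the base objects $\ga_i$ in $\PKoS$ is a $k$-linear combination of composites of bypass moves, and one argues that in the twisted complex only length-one contributions survive as independent differential entries, the longer composites being recorded by the nested cone structure rather than by $p$ itself.

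The main obstacle I anticipate is step (2): one must exhibit a bypass move at an innermost disconnected region that genuinely simplifies the configuration and control what happens to \emph{all three} vertices of the bypass triangle simultaneously. A naive bypass might reduce the number of disconnected regions in $\ga'$ but reintroduce a disconnected region (or increase intersection complexity) in $\ga''$, stalling the induction; the choice of equator must be made carefully — typically running $\ell$ from the innermost circle $C$ out toward $\partial\Si$ through the region separating $B$ from the boundary — and one needs a topological argument (a cut-system / innermost-disk argument on $\Si$) showing such an $\ell$ exists and that the third vertex $\ga''$ is handled by Corollary \ref{tightcor} whenever it is not itself simpler. A secondary subtlety is the bookkeeping in steps (4)–(5): making precise that iterated cones of one-sided twisted complexes can be normalized so that the differential has bypass-move entries, for which I would appeal to the explicit description of $\PKoS^\pt$ in Definition \ref{ptdef} and an upper-triangularity argument organizing the $\ga_i$ by complexity.
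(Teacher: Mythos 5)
Your overall strategy---induct on a complexity measure, use a carefully chosen bypass to produce a distinguished triangle, and assemble the iterated cones into a one-sided twisted complex via the explicit description in Definition~\ref{ptdef}---is the same framework the paper uses. The final bookkeeping in your steps (4)--(5) is fine and matches the paper's reliance on Remark~\ref{conerem}. However, the obstacle you flag in your own step (2) is in fact a genuine gap, not merely a subtlety, and your proposed way around it does not close it.

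Two specific problems. First, your primary complexity measure (number of boundary-disconnected components of $\Si\backslash\ga$) is not monotone under bypass moves: a single bypass can split one disconnected region $B$ into two disconnected regions (for instance, cutting a punctured torus along a non-separating curve gives an annulus, but cutting along a separating curve gives two pieces), so the induction could fail to terminate on the first coordinate. The secondary coordinate (intersection with a fixed arc system) may also increase on $\ga''$ for a naive choice of equator, which is exactly the stalling you anticipated. Second, your dichotomy ``merge $C$ with a neighbor'' or ``create a trivial $S^1$'' for the effect of the bypass is not exhaustive, and the base cases that remain are not handled: when $B$ is an annulus or a once-punctured torus, no boundary circle of $B$ is homotopically trivial, so Corollary~\ref{tightcor} does not apply, and one still needs to exhibit the specific bypasses that resolve those configurations.

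The paper's proof avoids both issues by using a different complexity: the (genus, number of boundary components) of the disconnected region $B$ itself. It first disposes of the disk case by Proposition~\ref{otprop}, then exhibits \emph{explicit} bypass equators for the annulus $\Si_{0,2}$ and the punctured torus $\Si_{1,1}$ whose triangles yield boundary-connected dividing sets. For general $B$, it uses the classification of compact orientable surfaces with boundary to pick a $1$-handle $H$ in a pants-type decomposition of $B$, then runs the equator $\ell$ from a boundary-connected region of $\Si\backslash\ga$ (which exists since $|m|\geq 2$) through the co-core interval $I$ of $H$; cutting $H$ strictly lowers genus or boundary count in both $\ga'$ and $\ga''$, and the induction bottoms out in the three base cases. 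If you want to repair your proposal, replace ``innermost circle'' with ``$1$-handle in a decomposition of $B$ into annuli and punctured tori,'' change the complexity to $(g(B),\, |\partial B|)$, and add the explicit annulus and torus bypasses as base cases.
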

\begin{proof}
  Observe that boundary disconnected regions can be nested. For example, an annulus can be placed within the annulus illustrated below. For the purpose of this argument, the amount of nesting $n(\ga)$ is defined to be
  $$n(\ga) := \max_{\substack{B}} \min_{\substack{a}} \vnp{a \cap \ga}$$
where $a : (I,\{0\},\{1\})\to (B, \partial \Si, int(B))$ is an arc from the boundary $\partial \Si$ to an interior  point of a connected component $B\subset \Si\backslash \ga$.

The proof is by induction on the amount of nesting in boundary disconnected regions. Fix a dividing set $\ga$. If the nesting $n(\ga)=0$ and there are no boundary disconnected regions then there is nothing to show.  So assume that the statement of the theorem holds for all $\ga$ with $n(\ga) = N$ and suppose $n(\ga) = N+1$. 

There are innermost disconnected regions $B$ and arcs $a : I \to B$ in $\Si$ which satisfy $\vnp{a \cap \ga} = N+1$. Fix such a disconnected region $B$.

If this disconnected region is a disk then $\ga$ is isomorphic to zero because $\vnp{m}\geq 2$ by Proposition \ref{otprop}.  If $\ga$ is a dividing set on a surface with boundary and $\Si\backslash \ga$ contains an annulus or a punctured torus component then there are bypass moves:
  $$\hspace{-.5in} \CPPic{annulus}  \hspace{.8in} \conj{ and } \hspace{.5in} \CPPic{ptorus6} $$
respectively. The first picture above shows two concentric, homotopically non-trivial, circles in the annulus $(S^1\times [0,1], 2)$.
In the second picture above, the two small circles are identified by folding the page to form a torus with one boundary component $(T^2\backslash D^2,2)$. In either case, the triangle associated to the indicated bypass move results in two dividing sets which connect $B$ to either the boundary, when $n(\ga)=1$ or a region outside of $B$, when $n(\ga)>1$ in either case lowering $n(\ga)$ by $1$.

In general, the innermost region $B$ is an orientable surface with boundary. Any such surface is
obtained by attaching 1-handles to the boundary components of a disjoint
union of punctured tori $\Si_{1,1}$ and annuli $\Si_{0,2}$. If $B$ has
genus $g$ and $n+1$ boundary components then $B$ is abstractly
homeomorphic to $g$-copies of $\Si_{1,1}$ and $n$-copies of $\Si_{0,2}$
glued together in this fashion.  In particular, there is a $1$-handle $H$
which, when cut along its cocore $I$, produces a disjoint union of surfaces
with lower genus or number of boundary components. There in an interval $\ell$ in $\Si$ which is obtained by connecting $I$ to a point on the boundary of the region outside of $B$ (which is not in $\partial B$ itself). By construction, this interval $\ell$ intersects $\ga$ at three points.
The bypass move $\th$ determined by $\ell$ is determines a distinguished triangle
$$\ga \xto{\th} \ga' \to \ga'' \to \ga[1]$$ 
with objects $\ga'$ and $\ga''$
that must contain disconnected regions, $B$ and $B''$, with lower genus or
number of boundary components. This procedure can be iterated until the
result contains only annuli and tori to which one applies the bypasses in
the previous paragraph.

Applying the procedure in the previous two paragraphs to each innermost disconnected region expresses the result as an iterated extension of dividing sets for which $n(\ga)< N+1$. It follows by induction that $\ga$ can be further expressed as an iterated extension of dividing sets for which $n(\ga)=0$ which are boundary connected. So that the statement of the theorem holds.
\end{proof}

\subsection{The positive half of the contact category}\label{possec}

The decomposition of the formal contact category introduced by the
proposition below will clarify our discussion later.

\begin{proposition}\label{signdecompprop}
The formal contact category $\Ko(\Si,m)$ associated to a surface with boundary splits into a product of two pieces:
$$\Ko(\Si,m) \cong \Ko_+(\Si,m) \times \Ko_-(\Si,m),$$
supported on the dividing sets $\ga\in\Ko(\Si,m)$ in which the basepoint $z_1\in\partial_1 \Si$ is contained in a positive or negative region respectively.
\end{proposition}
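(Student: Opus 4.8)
The plan is to repeat the argument of Theorem~\ref{splitthm}, with the Euler number $\spc$ replaced by the coarser invariant that records which side of the dividing set the distinguished basepoint $z_1\in\partial_1\Si$ lies on. For a dividing set $\ga$ on $(\Si,m)$ one has $z_1\notin\ga$, because $\ga\cap\partial\Si=m$ and $m\cap z=\emptyset$; hence $z_1$ lies in exactly one of the two regions $\Sp,\Sm$ of $\Si\setminus\ga$, and I set $\zeta(\ga)=+$ in the first case and $\zeta(\ga)=-$ in the second. The aim is to show that $\zeta$ produces a decomposition $\PKo(\Si,m)\cong\PKo_+(\Si,m)\coprod\PKo_-(\Si,m)$ of $k$-linear categories, where $\PKo_\pm(\Si,m)$ is the full subcategory spanned by the dividing sets with $\zeta(\ga)=\pm$, and then to push this decomposition through the Postnikov localization and the pretriangulated hull exactly as in the proof of Theorem~\ref{splitthm}.

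The one genuinely geometric input, and the step I expect to require the most care, is that $\zeta$ is constant along the generating morphisms of $\PKo(\Si,m)$. Invariance under isotopy is immediate, since by Definition~\ref{isodef} an isotopy of dividing sets fixes the basepoints $z$ and hence preserves the region meeting $z_1$. For a bypass move $\th:\ga\to\ga'$ I would argue that the regluing prescription of Definition~\ref{bypassattdef} is supported in a small ball $B\subset\Si$ which may be taken disjoint from a neighborhood of $z_1$ (the endpoints of the equatorial arc $\ell$ lie on $\ga$, which avoids $z$), outside of which the dividing set together with its partition into positive and negative regions is unchanged; moreover the prescription carries the positive region of $\ga$ onto the positive region of $\ga'$ and likewise for the negative region, as in the bookkeeping already used in the proof of Proposition~\ref{dualityfunprop} and visible in the picture following Definition~\ref{bypassattdef}. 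It follows that the region of $\ga'$ adjacent to $z_1$ carries the same sign as the region of $\ga$ adjacent to $z_1$, so $\zeta(\ga')=\zeta(\ga)$. Since relations (1) and (2) of Definition~\ref{pkosdef} only involve bypass moves with a common source and target, they respect this dichotomy; hence $\zeta$ is well defined on objects and there are no nonzero morphisms of $\PKo(\Si,m)$ between objects on which $\zeta$ disagrees, which gives the claimed coproduct decomposition.

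The rest is formal and parallels the proof of Theorem~\ref{splitthm}. Each bypass triangle $\tilde{\theta}:\Tp\to\PKo(\Si,m)$ of Proposition~\ref{triprop} has all three of its vertices in a single summand, so the defining functor $\Xi$ splits as $\Xi_+\coprod\Xi_-$ with $\Xi_\pm:\coprod\Tp\to\PKo_\pm(\Si,m)$; the Postnikov localization then splits as $L_\Xi\PKo(\Si,m)\cong L_{\Xi_+}\PKo_+(\Si,m)\coprod L_{\Xi_-}\PKo_-(\Si,m)$ because homotopy pushouts commute with coproducts, and passing to pretriangulated hulls and applying Proposition~\ref{ptmultprop} turns the coproduct into a product, giving $\Ko(\Si,m)\cong\Ko_+(\Si,m)\times\Ko_-(\Si,m)$ with $\Ko_\pm(\Si,m):=\bigl(L_{\Xi_\pm}\PKo_\pm(\Si,m)\bigr)^\pt$. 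Although not needed for the statement, one may add that the Euler duality functor $-^\vee$ of Proposition~\ref{dualityfunprop} interchanges the conditions $\zeta=+$ and $\zeta=-$, and therefore restricts to an equivalence $\Ko_+(\Si,m)\simeq\Ko_-(\Si,m)$, which justifies calling $\Ko_+(\Si,m)$ the positive half of the formal contact category.
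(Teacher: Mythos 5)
Your proof is correct and follows essentially the same route as the paper: the paper's proof simply observes that isotopies and bypass moves preserve the sign of the region containing $z_1$ and then refers to the formal splitting argument of Theorem~\ref{splitthm}, which is exactly what you spell out. The only difference is that you fill in the details the paper leaves implicit (the support-away-from-$z_1$ argument for bypass moves, the splitting of $\Xi$ and of the Postnikov localization, and the application of Proposition~\ref{ptmultprop}).
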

\begin{proof}
  If two dividing sets $\ga$ and $\ga'$ are isotopic then the signs of the
  regions containing the basepoint must be equal. If $\th : \ga \to \ga'$ is
  a bypass move then it cannot change the sign of the region containing the
  basepoint $z_1$.  The rest of the proof follows along the same lines of
  the proof of Theorem \ref{splitthm}.
\end{proof}

By Proposition \ref{dualityfunprop}, the two pieces found in the
decomposition above are equivalent:
$$-^\vee : \Ko_+^n(\Si,m) \xto{\sim} \Ko_-^{-n}(\Si,m).$$
In Corollary \ref{rotationprop}, moving the basepoint $z_1$ to an adjacent region is shown to yield an equivalence $r : \Ko_+^n(\Si,m) \xto{\sim} \Ko_-^{n}(\Si,m)$. By composing the two maps we obtain an equivalence:
$$\Ko_+^n(\Si,m) \xto{\sim} \Ko^{-n}_+(\Si,m).$$
See also Proposition \ref{ytdualprop}.

\newpage
\section{Symmetries and generators of contact categories}\label{mcgMainsec}
The mapping class group of the surface $\Si$ is shown to act naturally on
the formal contact category $\KoS$. After introducing arc diagrams and
parameterizations of surfaces by arc diagrams, each parameterization of
$\Si$ by an arc diagram is shown to yield a system of generators for the
formal contact category.  Section \ref{kodecatsec} contains a discussion of
decategorification.

\subsection{The mapping class group action}\label{mcgsec}

In this section we show that the mapping class group $\mcgS$ acts naturally
on the formal contact category $\KoS$.

\begin{defn}
  Suppose that $\Sigma$ is an oriented surface. Then the mapping class group $\mcgS$ is the group of connected components of the group of orientation preserving and boundary fixing diffeomorphisms:
$$\mcgS = \pi_0 Diff^+(\Sigma,\partial\Sigma).$$
\end{defn}

Recall that an action of a group $G$ on a
dg category $\eC$ is a homomorphism from $G$ to the group
$\Aut(\eC)\subset \Endo_{\Hmo}(\eC)$ of derived equivalences.

\begin{theorem}\label{mcgthm}
  The mapping class group $\mcgS$ acts naturally on the formal contact category $\KoS$.
\end{theorem}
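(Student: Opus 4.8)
The plan is to build the action functorially from the definition of $\KoS$ as a pretriangulated hull of a Postnikov localization. A diffeomorphism $\phi\in Diff^+(\Sigma,\partial\Sigma)$ acts on the set of isotopy classes of dividing sets on $\Sigma$ (since it fixes the boundary pointwise, it fixes the basepoints $z$ and the boundary points $m$), and it carries bypass disks to bypass disks, preserving orientations because $\phi$ is orientation preserving. Hence $\phi$ induces an automorphism of the pre-pre-formal contact category $\PPKoS$, and because $\phi$ respects caps, disjointness of bypasses, and the local models for bypass moves, this descends to an automorphism $\PKoS \to \PKoS$. First I would make this precise: a bypass move $\th=(T,\ga,\ga')$ is sent to $\phi_*\th = (\phi(T),\phi(\ga),\phi(\ga'))$, and relations (1) and (2) of Definition \ref{pkosdef} are manifestly preserved.

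Next I would push the automorphism through the localization. Since $\phi_*$ permutes the bypass triangles $\tilde\theta : \Tp \to \PKoS$, it is compatible with the functor $\Xi : \coprod_\theta \Tp \to \PKoS$ in the sense that $\phi_* \circ \Xi$ differs from $\Xi$ only by a permutation of the index set $\{\theta\}$; by the universal property of the Postnikov localization (Definition \ref{anlocdef}, Proposition \ref{anlocexprop}) this induces an automorphism of $L_\Xi \PKoS$ in $\Hqe$, and then applying $-^\pt$ (functorial by Proposition \ref{pretriprop}) yields an automorphism of $\KoS = L_\Xi\PKoS^\pt$, i.e. an element of $\Aut(\KoS)\subset \Endo_{\Hmo}(\KoS)$. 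It then has to be checked that isotopic diffeomorphisms induce the same map up to quasi-equivalence: an isotopy $\phi_t$ carries a dividing set $\ga$ through a family of isotopic dividing sets, hence $\phi_{0*}$ and $\phi_{1*}$ agree on objects of $\PKoS$ (which are isotopy classes) and on generating morphisms, so they are literally equal on $\PKoS$, hence induce the same class on $\KoS$. Finally, functoriality $(\phi\psi)_* = \phi_*\psi_*$ holds strictly on $\PPKoS$ and therefore, by uniqueness of lifts through the localization and the pretriangulated hull, holds in $\Endo_{\Hmo}(\KoS)$; this gives the homomorphism $\mcgS \to \Aut(\KoS)$.

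The word ``naturally'' should be addressed as well: I would note that an orientation-preserving diffeomorphism $\Sigma \to \Sigma'$ induces an equivalence $\KoS \xto{\sim} \Ko(\Sigma')$ compatible with composition, so that $\Ko(-)$ is a functor on the groupoid of surfaces and orientation-preserving diffeomorphisms, and the $\mcgS$-action is the restriction of this functor to automorphisms — this is what makes the action natural with respect to the construction.

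The main obstacle is purely bookkeeping about strictness versus homotopy-coherence: the assignment $\phi \mapsto \phi_*$ is strict on the cofibrant model $\PPKoS$ (freely generated, hence cofibrant by Remark \ref{cofibrem}), but after passing to the homotopy pushout defining the localization one only gets an action in $\Hmo$, so care is needed to argue that composition is respected on the nose at the level of $\Endo_{\Hmo}$. This is handled by the uniqueness clause in Definition \ref{anlocdef} together with the adjunction of Proposition \ref{pretriprop}: lifts through $Q$ and through $-^\pt$ are unique up to the relevant homotopy, so the strict functoriality on $\PPKoS$ forces functoriality of the induced maps in $\Hmo$, exactly as in the argument at the end of Section \ref{braidgroupsec} showing that lifts commute with compositions.
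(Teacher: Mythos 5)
Your proposal follows essentially the same route as the paper's own proof: define the action on $\PKoS$ (or $\PPKoS$) by pushing forward dividing sets and bypass disks, observe that the cap and disjointness relations are preserved, lift through the Postnikov localization via the universal property of Definition \ref{anlocdef} because $g_*$ permutes bypass triangles, and then apply $-^\pt$; functoriality comes from strictness on the generating category together with uniqueness of lifts. The extra checks you flag — well-definedness on isotopy classes, and the distinction between a strict action on the cofibrant model and an action in $\Hmo$ — are useful elaborations but do not change the structure of the argument, which matches the paper's proof step for step.
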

\begin{proof}
  The proof occurs in two steps: first we construct a natural $\mcgS$-action on the pre-formal contact category $\PKoS$ and second this group action is extended to the formal contact category $\KoS$.

  A diffeomorphism class $g\in\mcgS$, determines a functor $f_g : \PKoS\to
  \PKoS$ that is defined by its action on dividing sets and bypass moves. If
  $\ga$ is an isotopy class of dividing set on $\Sigma$ then there is a
  unique isotopy class of dividing set $g\ga$ and if $\theta = (T,\ga,\ga')$
  is a bypass move then there is a unique bypass disk $gT$ and associated
  bypass move $g\theta = (gT, g\ga,g\ga')$. Since the category $\PKoS$ is
  generated by bypass moves and the assignment $\th \mapsto g\th$ preserves disjointness of bypass moves
  and caps of bypass moves, there is a functor
$$f_g : \PKoS \to \PKoS \conj{ such that } f_g(\ga) = g\ga \normaltext{ and } f_g(\theta) = g\theta.$$
Both the composition law $f_{gg'} = f_g \circ f_{g'}$ and naturality follow directly from the definition. In particular, since the identity diffeomorphism $1\in\mcgS$ fixes both dividing sets and bypass moves the functor $f_1$ is the identity functor $1_{\PKoS}$.

Suppose that $f_g : \PKoS \to \PKoS$ is a functor occuring in the construction above. Composing with the localization functor $Q : \PKoS \to L_\Xi\PKoS$ from Equation \eqref{locfun} yields a functor $\PKoS \to L_{\Xi}\PKoS$. By Definition \ref{anlocdef}, the image of $Q^* : \Hom_{\Hqe}(L_{\Xi}\PKoS, L_{\Xi}\PKoS) \to \Hom_{\Hqe}(\PKoS,L_{\Xi}\PKoS)$ is the subset of functors $f : \PKoS \to L_{\Xi}\PKoS$ whose restriction to a bypass triangle extends to a distinguished triangle in the localization $L_{\Xi}\PKoS$.

If $\tilde{\theta} : \Tp \to \PKoS$ is the bypass triangle: 
$$\ga \xto{\theta} \ga' \xto{\theta'} \ga'' \xto{\theta''} \ga[1]$$
associated to a bypass move $\theta = (T,\ga,\ga')$ on $\Sigma$ by Proposition \ref{triprop} then $f_g(\theta) = (gT,g\ga,g\ga')$ and $f_g(\tilde{\theta})$ corresponds to the bypass triangle:
$$g\ga \xto{g\theta} g\ga' \xto{g\theta'} g\ga'' \xto{g\theta''} g\ga[1].$$
Since the criteria of Definition \ref{anlocdef} are satisfied, there is a unique
lift of the functor $Q\circ f_g$ to a functor:
$\tilde{f}_g : L_\Xi \PKoS \to L_\Xi \PKoS$. By Proposition \ref{pretriprop}, there
is an induced functor between the associated pretriangulated hulls:
$$h_g : \KoS \to \KoS \conj{ where } h_g = \tilde{f}_g^\pt.$$
Uniqueness of the lift and functoriality of $-^\pt$ imply that the stated group action is obtained.
\end{proof}

The same argument as above allows us to define an automorphism $r$ which
moves the first basepoint across the first adjacent boundary point.  The
corollary below records the existence of this map.

\begin{cor}\label{rotationprop}
  There is a distinguished automorphism $r$ of $\Ko(\Si,m)$ which moves the
  first basepoint $z_1\in\partial_1\Si$ on the first boundary component over
  the nearest boundary point in the direction of the orientation.
\end{cor}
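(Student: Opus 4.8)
The plan is to run the two-step argument of Theorem~\ref{mcgthm} with the diffeomorphism $g\in\mcgS$ replaced by a \emph{one-notch rotation} $\rho$ of the first boundary circle. Fix a collar $\partial_1\Si\times[0,1)\subset\Si$ and let $p$ be the point of $m_1$ immediately following $z_1$ in the direction of the boundary orientation. Then there is an orientation preserving homeomorphism $\rho\co\Si\to\Si$, supported in this collar, which is the identity on $\partial_i\Si$ and on $z_i$ for $i\geq 2$, carries $m$ onto $m$ by cyclically shifting the points of $m_1$ one step in the direction of the orientation, and sends $z_1$ to a point of $\partial_1\Si\setminus m_1$ lying just past $p$. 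Note that $\rho$ does not fix $\partial_1\Si$ pointwise, so $\rho\notin\mcgS$; this is precisely why Theorem~\ref{mcgthm} does not already supply $r$. Because $\rho(m)=m$, the assignment $\ga\mapsto\rho(\ga)$ is a bijection on isotopy classes of dividing sets on $(\Si,m)$, and since $\rho$ is a homeomorphism it carries bypass disks to bypass disks, hence bypass moves $\th=(T,\ga,\ga')$ to bypass moves $\rho\cdot\th=(\rho T,\rho\ga,\rho\ga')$, preserving disjointness of bypass moves and the intercardinal position of caps.

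The previous paragraph gives, exactly as in the first step of the proof of Theorem~\ref{mcgthm}, a functor $f_r\co\PKoS\to\PKoS$ with $f_r(\ga)=\rho(\ga)$ and $f_r(\th)=\rho\cdot\th$, which respects the relations $(1)$ and $(2)$ of Definition~\ref{pkosdef}; the reverse rotation $\rho^{-1}$ produces $f_{r}^{-1}$ with $f_r f_{r}^{-1}=\mathrm{id}_{\PKoS}=f_{r}^{-1}f_r$. By Proposition~\ref{triprop} the functor $f_r$ sends the bypass triangle $\ga\xto{\th}\ga'\xto{\th'}\ga''\xto{\th''}\ga[1]$ to the bypass triangle $\rho\ga\xto{\rho\cdot\th}\rho\ga'\to\rho\ga''\to\rho\ga[1]$, so, composing with the localization $Q\co\PKoS\to L_{\Xi}\PKoS$ appearing in the construction of $\KoS$ (Definition~\ref{formaldef}), the criterion of Definition~\ref{anlocdef} is met and $Q\circ f_r$ lifts uniquely to $\tilde f_r\co L_{\Xi}\PKoS\to L_{\Xi}\PKoS$. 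Applying the pretriangulated hull (Proposition~\ref{pretriprop}) and setting $r:=\tilde f_r^{\,\pt}\co\KoS\to\KoS$ gives the desired functor; uniqueness of the lifts together with functoriality of $-^\pt$ show that the functor built from $\rho^{-1}$ is a two-sided inverse, so $r$ is an automorphism. Finally, since an orientation preserving homeomorphism carries the positive region of $\ga$ to the positive region of $\rho(\ga)$, while $z_1$ and $\rho(z_1)$ lie in adjacent, hence oppositely signed, arcs of $\partial_1\Si\setminus m_1$, the automorphism $r$ interchanges $\Ko_+^n(\Si,m)$ and $\Ko_-^n(\Si,m)$, as needed in the discussion following Proposition~\ref{signdecompprop}.

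I expect the only genuine content to be the bookkeeping in the first paragraph: verifying that the one-notch rotation, once the forced cyclic relabeling of $m_1$ is taken into account, is an \emph{endo}functor of $\PKoS$ rather than a functor into a merely isomorphic category, and pinning down its effect on the region containing the basepoint. Once that is settled, the localization and pretriangulated-hull steps are a verbatim repetition of the proof of Theorem~\ref{mcgthm}.
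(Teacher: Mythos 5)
Your proposal is correct and takes the same route as the paper, which offers no explicit proof beyond the remark preceding the corollary that the argument of Theorem~\ref{mcgthm} applies verbatim to the one-notch rotation. You correctly spell out the details the paper elides, in particular that the rotation $\rho$ preserves $m$ as a set, that it does not fix $\partial_1\Si$ pointwise (so $\rho\notin\mcgS$ and the corollary is not literally a special case of the theorem), and why $r$ exchanges $\Ko^n_+(\Si,m)$ and $\Ko^n_-(\Si,m)$.
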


The functor $r$ induces functors $r : \Ko^n_{\pm}(\Si,m) \to \Ko^n_{\mp}(\Si,m)$ with respect to the decomposition of $\Ko^n(\Si,m)$ found in Proposition \ref{signdecompprop}. See also Propositon \ref{ytdualprop}.

\subsection{Arc diagrams}\label{arcdiagramssec}

An arc diagram is a combinatorial way to record a handle decomposition of a
surface. The definitions below are due to R. Zarev \cite{Zarev1} and
constitute generalizations of ideas which were used by R. Lipshitz,
P. Ozsv\'{a}th and D. Thurston \cite[\S 3.2]{LOT}.

\begin{defn}\label{arcddef}
  An {\em arc diagram} $\Ad$ consists of three things:
\begin{enumerate}
\item an ordered collection $\lns = \{ \Ad_1,\ldots,\Ad_\ell\}$ of $\ell$ oriented line segments,
\item a set $\pts =\{ a_1,\ldots, a_{2k}\}$ of distinct points in the line segments $\lns$ and
\item a two-to-one function $M : \pts \to \{ 1,\ldots,k\}$ called the {\em
    matching}.  
\end{enumerate}
In order to apply to any version of the Bordered Heegaard-Floer package,
this data is required to be {\em non-degenerate}: after performing surgery
on $\lns$ at each $0$-sphere $M^{-1}(j)$, for $1\leq j \leq k$, the resulting $1$-manifold has no
closed components.
\end{defn}

The set of points $\pts$ receives a total ordering from the order on the set $\lns$ and the
orientations of the line segments. The numbers $\ell$ and $k$ are allowed to
be zero. Each arc diagram $\Ad$ determines a surface $\asuf$.

\begin{defn}\label{surfdef}
The {\em surface} $\asurf$ associated to an arc diagram $\Ad$ is given by thickening each line segment $\Ad_i$ to $\Ad_i \times [0,1]$ for $1\leq i \leq \ell$ and attaching oriented 1-handles $D^1\times D^1$ along the normal bundles of the 0-spheres $M^{-1}(j) \times \{0\}$ for $1\leq j \leq k$. The surface $\asurf$ is oriented by extending the orientation of the line segment $\Ad_1$ and its positive normal.

\end{defn}

\begin{rmk}\label{newrmk}
One can regard $\Ad_i$ as part of the boundary of $\Ad_i \times [0,1]$. In Definition \ref{elemdef}, an arc parameterization will be used to construct dividing sets $\z_C \in \Ko_+(F(\Ad))$ in which the positive regions correspond to the handles of $\Ad$. In particular, $\Ad_i$, when regarded as part of the boundary, will always be contained in a positive region of $\z_C \in \Ko_+(F(\Ad))$ (and a negative region of $z_C\in \Ko_-(F(\Ad))$.
  \end{rmk}

Recall that the points $m$ on a pointed oriented surface $(\Si,m)$ are also
ordered by the ordering of the boundary components and the order on each
boundary component is obtained by starting from each basepoint and following
in the direction of the orientation induced on the boundary.

\begin{defn}
Suppose that $m \subset \partial \Si$ is the set of {\em sutures} or points fixed along the boundary of $\Si$.  An {\em arc parameterization} $(\Ad,\vp_{\Ad})$ of a pointed oriented surface $(\Si,m)$ is an arc diagram $\Ad$ and a proper orientation preserving diffeomorphism 
$$\varphi_\Ad : (\asurf, \cup_{i=1}^\ell \partial\Ad_i) \to (\Si,m)$$
which preserves total order on the points $\pts$ and $m$ respectively. 
\end{defn}

\begin{remark}
An arc parameterization identifies $\cup_{i=1}^\ell \partial\Ad_i$ with $m$.  
The sets $m$ and $\pts$ play different roles, but under this identification, pairs in $m$ partition the points of $\pts$.
\end{remark}

\begin{example}\label{annulusparam}
The annulus $(S^1\times [0,1],(2,2))$ with two points fixed on each boundary component is parameterized by the arc diagram $\Ad$ pictured on the left below.
$$\BPic{annulusarcd}\conj{ } \BPic{annulusdecomp}$$
This picture contains two oriented lines $\lns = \{\Ad_1,\Ad_2\}$ and four points $\pts = \{x,x',y,y'\}$ with $\Ad_1 = xyx'$ and $\Ad_2 = y'$. The matching function $M : \pts \to \{1,2\}$ is determined by the assignments $M(x) = 1 = M(x')$ and $M(y) = 2 = M(y')$. The picture on the right shows the surface $\asurf$ associated to $\Ad$.
\end{example}

\subsection{Generators from arc diagrams}\label{zarevgensec}
In this section we show that a parameterization
$\param = (\Ad, \varphi_{\Ad})$ of a pointed oriented surface $(\Si,m)$
determines a canonical collection $\Zi(\Ad)$ of generators for the
associated contact category $\Ko(\Si,m)$.  This material is motivated by a
reading of R. Zarev \cite{Zarev2}.

\begin{defn}\label{elemdef}
Suppose that a pointed oriented surface $(\Si,m)$ is parameterized by an arc diagram $\Ad$. Then for each subset
  $C \subset \{1,\ldots,k\}$ of matched pairs, there is an {\em elementary dividing set} 
$$\z_C = \partial R_C \conj{ on } \Si$$ 
where $R_C \subset \Si$ is the union of a thickening of the core of each 1-handle indexed by $C$ with the collection of thickened oriented arcs $\Ad_i \times [0,1]$. The region $R_C$ is the positive region of $\z_C$ and its complement $\Si\backslash R_C$ is the negative region of $\z_C$.

An elementary dividing set may be also be called a {\em positive elementary
  dividing set}.  The {\em set of positive elementary dividing sets} will be
denoted by $\Zi_+(\Ad)$.  The {\em set of negative elementary dividing sets}
$\Zi_-(\Ad) = \Zi_+(\Ad)^\vee$ are obtained by reversing the positive and
negative regions. The {\em set of elementary dividing sets} is the union
$$\Zi(\Ad)= \Zi_+(\Ad) \cup \Zi_-(\Ad).$$
\end{defn}

\begin{theorem}\label{zarevgenthm}
Suppose that $(\Si,m)$ is a pointed oriented surface with boundary and
$(\Si,m)$ is parameterized by an arc diagram $\Ad$. Then the set of
elementary dividing sets $\Zi(\Ad)$ classically generate the contact
category $\KoSm$: any dividing set $\ga$ is homotopy equivalent to an
iterated extension of dividing sets $\z \in \Zi(\Ad)$.
\end{theorem}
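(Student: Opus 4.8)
The plan is to argue by induction on a geometric complexity of dividing sets, using the bypass triangles built into $\KoSm$ (Definition \ref{formaldef}) to express an arbitrary dividing set as an iterated cone — that is, a convolution — of elementary ones (Definition \ref{elemdef}). The argument is modelled on the normal-form techniques of R.~Zarev for bordered sutured categories.

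First I would reduce the problem. If $\ga$ contains a homotopically trivial closed curve then $\ga\cong 0$ by Corollary \ref{tightcor} and there is nothing to prove, so assume it contains none. If $\ga$ is boundary disconnected, Theorem \ref{surfacetheorem} already exhibits $\ga$ as a convolution $(\opp_i\ga_i,p)$ of boundary connected dividing sets $\ga_i$ whose twisted differential consists of bypass moves; since iterated cones of iterated cones are again iterated cones, a resolution of each $\ga_i$ assembles into one for $\ga$ (Remark \ref{conerem}). So it suffices to treat boundary connected $\ga$.

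Fix the arc parameterization $\param=(\Ad,\vp_\Ad)$ and the associated handle decomposition of $\Si$, with cocores $c_1,\dots,c_k$ of the $1$-handles. Isotope $\ga$ rel its endpoints on $\partial\Si$ so that $\iota(\ga):=\sum_j\#(\ga\cap c_j)$ is minimal, and among such representatives so that a secondary quantity — e.g.\ the number of arc components of $\ga$ contained in the thickened line segments $\Ad_i\times[0,1]$ — is minimal; call such a representative \emph{reduced}. Note $\iota(\z_C)=2\vnp{C}$ and that each $\z_C$ already lies in standard position with respect to the handle decomposition. For the inductive step, suppose $\ga$ is reduced, boundary connected, and not isotopic to any $\z_C$. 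One then locates a $1$-handle crossed inefficiently, or a region in a thickened line segment bounded with the ``wrong'' sign, and produces a bypass move $\th:\ga\to\ga'$ whose equatorial arc is supported near the relevant cocore (resp.\ near an arc inside a thickened line segment) in such a way that in the bypass triangle
$$\ga\xto{\th}\ga'\xto{\th'}\ga''\xto{\th''}\ga[1]$$
both $\ga'$ and $\ga''$ have strictly smaller complexity. Rotating this triangle and using Proposition \ref{prestriprop} presents $\ga$, up to shift, as the cone on $\th':\ga'\to\ga''$ in $\Ho(\KoSm)$; by induction $\ga'$ and $\ga''$ are convolutions of elementary dividing sets with bypass-move differentials, and splicing these into the cone yields the desired convolution for $\ga$, with $\th'$ contributing the new bypass-move entries of $p$. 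The recursion terminates because the complexity is a non-negative integer strictly decreasing at each step, so the resulting convolution is finite, and it is a legitimate twisted complex by the cone construction (Remark \ref{conerem}).

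The crux — and the step I expect to be the main obstacle — is the base case together with the design of the complexity: one must (i) arrange the complexity so that \emph{some} admissible bypass move strictly decreases it on both of the other two vertices of its triangle, and (ii) show that a reduced, boundary connected dividing set of minimal complexity is isotopic to some $\z_C$. For (ii) the idea is that minimality forbids returning bigons between $\ga$ and the cocores and forces $\ga\cap(\Ad_i\times[0,1])$ to be a system of $\partial$-parallel arcs cutting off regions of a single sign; an innermost region of the wrong sign, or an innermost $\ga$--cocore bigon, would be precisely the support of a complexity-reducing bypass, contradicting minimality. Once this is established, reading off the set $C$ of $1$-handles that $\ga$ crosses identifies the positive region of $\ga$ with $R_C$, so $\ga\simeq\z_C$. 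Carrying this out rigorously requires a somewhat delicate case analysis of the intersection pattern of $\ga$ with the $1$-skeleton of the handle decomposition, in the spirit of the normal-form arguments of \cite{Zarev2}.
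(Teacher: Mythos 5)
Your overall strategy matches the paper's: reduce to boundary connected dividing sets via Theorem \ref{surfacetheorem}, induct by finding bypass triangles whose other two vertices have strictly smaller complexity, and read off elementary generators $\z_C$ at the bottom. The paper also uses the cocore intersection number $\vert\ga\cap c_i\vert$ as the first stage of the complexity, exactly as you propose, and explicitly reduces it to $0$ or $2$ for each cocore before proceeding.

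However, the part you flag as the crux — designing the secondary complexity and carrying out the ``delicate case analysis'' inside the thickened line segments — is precisely where the paper does its real work, and your sketch stops short of it. After removing the cocores, the paper decomposes $\Si$ into disks $D_i^2$ and shows that a reduced $\ga$ cuts each disk into a nested collection of regions, each a thickening of a tree; the base region containing $\Ad_i$ must be made connected, and this is achieved not by appealing to ``an innermost region of the wrong sign'' but by an explicit bypass whose equator runs from the base region through an edge of a tree into another tree, with the distinguished triangle having one vertex in which the two trees are joined and another of strictly smaller tree complexity. The paper also notes a subtlety you would need to address in your lexicographic setup: the tree-connecting bypasses can alter which $1$-handles are occupied, which changes the primary cocore count — monotonically decreasing, but this needs to be argued rather than assumed, and it is what allows the secondary induction to be carried out disk by disk ($1\le j<i$ held fixed) without unwinding the earlier reductions. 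So your proposal is the right outline, but the content of the inductive step, which you explicitly defer, is exactly the content of the theorem.
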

\begin{proof}
Suppose that $\ga$ is a dividing set on $\Si$. We will show that $\ga$ can be expressed in terms of elementary dividing sets. The proof will be divided into a number of steps.

{\bf First}. By Theorem \ref{surfacetheorem} we can assume that $\ga$ is boundary connected. 

{\bf Second}. Here we simplify $\ga$ within the $1$-handles of $F(\Ad)$.

Let $\{ c_1,\ldots, c_k\}$ be the set of cocores of 1-handles of $F(\Ad)$.
If $c_i$ is a cocore of a 1-handle in $\asurf$ and the intersection number 
$\vert \ga\cap c_i \vert > 2$ then there is a bypass disk with equator parallel to $c_i$ with
associated bypass triangle $\ga \to \ga'\xto{\th_B} \ga''\to\ga[1]$ with $\vert \ga'\cap c_i\vert, \vert \ga''\cap c_i\vert < \vert \ga \cap c_i\vert$. So $\ga$ is isomorphic to a cone:
$$\ga \cong C(\th_B) \conj{ such that } \vert \ga'\cap c_i\vert, \vert \ga''\cap c_i\vert < \vert \ga \cap c_i\vert.$$

Since $\ga$ bounds an orientable surface contained within the $1$-handle,
$\vert \ga\cap c_i \vert$ is even. In more detail, $\ga$ bounds $R\subset
\Si\backslash \ga$ so $R\cap c_i$ is a disjoint union of intervals. Since
the cardinality of the boundary of an interval is two, $\ga\cap c_i =\partial (R\cap c_i)$ is even.

Therefore, after iterating this procedure some number of times, we can assume that 
\begin{equation}\label{occeq}
\vert \ga\cap c_i\vert = 0 \conj{ or } \vert \ga\cap c_i\vert = 2 \conj{ for } 1\leq i \leq k.
\end{equation}

If the intersection number is $0$ then the $i$th 1-handle is {\em unoccupied} and if the number is $2$ then the $i$th 1-handle is {\em occupied}.

{\bf Third}. Here we simplify $\ga$ within the $0$-handles of $F(\Ad)$. 

After removing the cocores from the surface, one obtains a disjoint union of disks
\begin{equation}\label{acceq}
  \asurf\backslash\{c_1,\ldots, c_k\} = \amalg_{i=1}^\ell D^2_i.
  \end{equation}
The positive regions of a dividing set $\ga$ produced by the second step intersects the boundary of each such disk along intervals where occupied 1-handles are attached and the end points of the oriented line segment $\Ad_i \times [0,1] \subset \partial D^2_i$.

Let us formalize the situation which we will simplify in the remainder of the proof.  
Suppose $R$ is a positive region bounded by $\ga$, and $D_i$ is a disk from Eqn. \eqref{acceq} then $R$ is {\em disconnected in $D_i$} if $R\cap \partial D_i \neq \emptyset$ and $(R\cap D_i) \cap \Ad_i \times [0,1] = \emptyset$. A region $R$ is {\em disconnected} if $R$ is disconnected in $D_i$ for some disk $D_i$ in Eqn. \eqref{acceq}.

A dividing set $\ga$ is elementary if and only if there is one positive region in each disk. So
in order to express $\ga$ produced by step two in terms of elementary dividing
sets, we must reduce the number of disconnected regions.
(This is just a version of Theorem \ref{surfacetheorem} with the boundary
components $\Ad_i \subset \partial \Si$ treated separately.)

Let $R_1, \ldots, R_N$ be the positive regions of $\ga$ which are disconnected. Our complexity function is
$$n(\ga) := \sum_{i=1}^N \sum_{j =1}^\ell |\pi_0(R_i\cap \partial D_j)| \in \ZZ_{\geq 0}$$ 
the total number of 1-handles occupied by the disconnected regions. Notice that if $N>0$ then there exists an $R$ such that $R\cap \partial D_i \neq \emptyset$ and so $n(\ga) > 0$. On the other hand, if $n(\ga) = 0$ then there are no disconnected regions and $N=0$.

We claim that any $\ga$ which satisfies Eqn. \eqref{occeq} with $n(\ga) > 0$ can be
expressed as a twisted complex in dividing sets $\ga'$ which satisfy
$n(\ga') = 0$. Suppose $n(\ga) > 0$, then there is a disk $D_i$ which contains a disconnected region. Let $*$ be the positive region which contains $\Ad_i\times [0,1]\subset D_i$. Now follow the orientation around $D_i$ to the region $R$ disconnected in $D_i$ which is adjacent to $*$ and consider the bypass move illustrated below.
\begin{center}
\begin{overpic}
{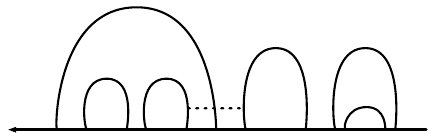}
 \put(129,25){$*$}
\put(60, 40){$R$}
\put(210, 0){$\partial D_i^2$}
\put(210, 60){$int(D_i^2)$}
\end{overpic}
\end{center}
This results in a triangle $\ga \to \ga' \to \ga''$ for which $n(\ga'), n(\ga'') < n(\ga)$.

Lastly, our dividing sets may still contain some positive regions which do
not intersect the boundary of any disk. Such regions can be removed with
Thm. \ref{surfacetheorem}.

\end{proof}

\begin{corollary}\label{zargencor}
  When a pointed oriented surface $\Si$ is parameterized by an
  arc diagram $\Ad$, the positive half of the formal contact category $\Ko_+(\Si)$ is generated
  by the positive elementary dividing sets $\Zi_+(\Ad)$.
\end{corollary}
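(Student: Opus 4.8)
The plan is to read the corollary off Theorem \ref{zarevgenthm} together with the product decomposition of Proposition \ref{signdecompprop}. Theorem \ref{zarevgenthm} asserts that every dividing set $\ga$ on $(\Si,m)$ is isomorphic in $\Ho(\Ko(\Si,m))$ to a convolution $(\opp_i \z_{C_i}, p)$ whose summands lie in $\Zi(\Ad)=\Zi_+(\Ad)\cup\Zi_-(\Ad)$ and whose twisted differential $p$ consists of bypass moves. Proposition \ref{signdecompprop} gives $\Ko(\Si,m)\cong\Ko_+(\Si,m)\times\Ko_-(\Si,m)$, split according to whether the basepoint $z_1$ lies in the positive or negative region, and --- being proved along the lines of Theorem \ref{splitthm} via Proposition \ref{ptmultprop} --- this decomposition is compatible with the formation of twisted complexes: a convolution whose summands have been sorted into the two factors is block-diagonal and is the product of its two sub-convolutions.

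First I would check that $\Zi_+(\Ad)\subseteq\Ob(\Ko_+(\Si,m))$ and, dually, $\Zi_-(\Ad)\subseteq\Ob(\Ko_-(\Si,m))$. By Definition \ref{elemdef} the positive region $R_C$ of a positive elementary dividing set $\z_C$ always contains the thickened line segments $\Ad_i\times[0,1]$; unwinding the conventions of Section \ref{zarevgensec} --- the placement of the basepoint from Definition \ref{psurfdef} together with the order-preserving diffeomorphism $\varphi_\Ad$ --- the boundary arc adjacent to $z_1$ borders $\Ad_1\times[0,1]$, so $z_1$ lies in the positive region of $\z_C$ for every $C$. Taking duals $\z_C\mapsto\z_C^\vee$ interchanges the two regions, so $\Zi_-(\Ad)=\Zi_+(\Ad)^\vee$ lands in the negative factor.

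Granting this, let $\ga\in\Ob(\Ko_+(\Si,m))$ and write $\ga\simeq(\opp_i\z_{C_i},p)$ via Theorem \ref{zarevgenthm}. Sorting the summands into $\Zi_+(\Ad)$ and $\Zi_-(\Ad)$ and using compatibility of the decomposition with twisted complexes, this convolution is the product of a convolution of positive elementary dividing sets with a convolution of negative ones. Since it represents $\ga$, which lies in the first factor $\Ko_+(\Si,m)$, the negative sub-convolution must be zero, so $\ga$ is isomorphic to the convolution built from the $\z_{C_i}\in\Zi_+(\Ad)$ alone, the connecting maps remaining bypass moves. Hence $\Zi_+(\Ad)$ generates $\Ko_+(\Si,m)$. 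The only non-formal step is the bookkeeping that locates $z_1$ on the positive side of each $\z_C$; everything else is the formal behaviour of convolutions under a product of dg categories.
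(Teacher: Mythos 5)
Your derivation is sound and is, in substance, the argument the paper treats as immediate (the paper prints no proof for this corollary). You apply Theorem \ref{zarevgenthm} to write a positive dividing set as a convolution of elementary generators with bypass-move differentials, and then use the product splitting of Proposition \ref{signdecompprop} to confine the summands to $\Zi_+(\Ad)$. That is exactly the intended reading.

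One small simplification worth noting: you do not really need the step where you sort the summands and then argue the negative block ``must be zero.'' Since Proposition \ref{signdecompprop} tells you bypass moves never change the sign of the region containing $z_1$, and the proof of Theorem \ref{zarevgenthm} produces its convolution entirely by bypass moves starting from $\ga$, the summands $\z_{C_i}$ automatically inherit $\ga$'s sign; negative elementary dividing sets never enter. Your more abstract route via block-diagonality of twisted complexes under a coproduct of linear categories (Proposition \ref{ptmultprop}) is also correct, just a little longer.

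The one genuinely nontrivial bookkeeping step is the one you rightly flag: checking that the paper's ordering conventions (Definitions \ref{psurfdef}, \ref{arcddef} and the order-preserving requirement on $\varphi_\Ad$) place $z_1$ in the positive region $R_C$ of every $\z_C$, so that $\Zi_+(\Ad)\subseteq\Ob(\Ko_+(\Si,m))$. The paper does not spell this out, but the definition of ``positive elementary dividing set'' in Definition \ref{elemdef} together with the conventions on $z_1$ is set up precisely to make this hold, and your sketch of why is reasonable.
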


\subsection{Decategorification}\label{kodecatsec}

\newcommand{\si}{\s} 

In this section we prove a variety of structural
properties and conjecture a decategorification statement for the formal
contact category.

\begin{prop}
A single bypass $\th : \ga \to \ga$ which takes $\ga$ to $\ga$ is capped. 
\end{prop}
\begin{proof}
One can make a small perturbation $a$ (or $b$) above (or below) of the equator $\ell$ of the bypass $\th$ as pictured on the lefthand side below. The bypasses associated to $a$ (or $b$) are isotopic to $\th$.
$$\begin{tikzpicture}[scale=10, node distance=2.5cm]
\node (B1) {$\CPPic{tmp1}$};
\node (C1) [right=1.25cm of B1] {$\CPPic{tmp2}$};
\draw[->] (B1) to node  {$\th$} (C1);
\end{tikzpicture}$$
Now by assumption the righthand side, or the result of performing $\th$, is isotopic to the lefthand side. This isotopy takes the caps pictured on the righthand side to caps of the bypasses on the lefthand side. So $a$ and $b$ are capped. But $a$ and $b$ arose as perturbations of $\th$, so $\th$ is capped.
\end{proof}

\begin{prop}\label{decatprop}
Let $\Si$ be a surface with boundary together with a parameterization  $(\Ad,\vp_{\Ad})$.
  There is a surjective map 
$$\e : \FF_2\inp{\Ob(\Ko_+(\Si))} \to \Lambda^*H_1(F(\Ad), F(\partial \Ad);\FF_2)$$
 where $F(\partial \Ad) := \cup_i \Ad_i \subset \partial F(\Ad)$. 
This map satisfies the following property: if
  $$\ga\to \ga' \to \ga''$$
  is a bypass triangle then $\e(\ga'') = \e(\ga) + \e(\ga')$.
  \end{prop}
\begin{proof}
  A dividing set $\ga\subset \Si$ determines a collection of positive regions: if $\Si\backslash \ga = \sqcup_{i\in I} R_i$ then the set of positive regions is given by  $\aR := \{ i\in I : R_i \normaltext{ is positive } \}$. For each such region $R\in \aR$, let $\partial_+ R := \partial R \cap F(\partial \Ad)$, the pair $(R,\partial_+ R)$ gives an inclusion 
$$i_R : (R,\partial_+ R) \to (F(\Ad), F(\partial \Ad)).$$
Let $n_R := \dim H_1(R,\partial_+ R; \FF_2)$ so that $\Lambda^{n_R} H_1(R,\partial_+ R; \FF_2)$ is $1$-dimensional and there is a unique choice of non-zero vector $v_R \in \Lambda^{n_R} H_1(R,\partial_+ R; \FF_2)$.  Now tensoring gives a map 
$$\hat{i} : \otimes_{R\in\aR} \Lambda^{n_R} H_1(R,\partial_+ R; \FF_2) \xto{\bar{i}} \otimes_{R\in \aR} \Lambda^{n_R}H_1(F(\Ad),F(\partial \Ad);\FF_2) \hookrightarrow \Lambda^*H_1(F(\Ad),F(\partial \Ad);\FF_2)$$
  where $\bar{i} := \otimes_{R\in \aR} \wedge^{n_R} (i_R)_*$ and the last map is a composition of wedge products. The map $\e$ is defined to be
  $$\e(\ga) := \hat{i}(\wedge_{R\in\aR} v_R).$$

  The $1$-handles in $F(\Ad)$ span $H_1(F(\Ad),F(\partial \Ad); \FF_2)$. If $C$ corresponds to a subset of $1$-handles then by construction $\e(\z_C)$ is the wedge product of these classes in $\Lambda^*H_1(F(\Ad),F(\partial \Ad); \FF_2)$. Since wedge products of $1$-handles span the exterior algebra, $\e$ is onto.

  Additivity of $\e$ can be observed by examining how the bypass moves
  affect elements in the first homology. 
\begin{center}
\begin{tikzpicture}[scale=10, node distance=2.5cm]
\node (A2) {\CPic{one22}};
\node (B2) [right=1cm of A2] {};
\node (C2) [right=1cm of B2] {\CPic{two22}};
\node (D2) [below=1.41421356cm of B2] {\CPic{three22}};
\draw[->, bend left=35] (A2) to node {} (C2);
\draw[->, bend left=35] (C2) to node {} (D2);
\draw[->, bend left=35] (D2) to node {} (A2);
\end{tikzpicture}
\end{center}
In the picture above the dashed arcs represent (local) choices of generators in a positive region. If the $\e(\ga) = A\wedge C$ and $\e(\ga') = B\wedge C$ are the wedge products of arcs depicted on the left and right respectively then $\e(\ga'') = (A+B)\wedge C$. The possible cases are handled similarly.

\end{proof}

\begin{cor}
Any bypass $\th : \z_C \to \z_{C'}$ between elementary dividing sets, the
third dividing set $\ga''$ in the associated bypass triangle,
\begin{equation}\label{trieq}
  \z_C \xto{\th}\z_{C'}\to \ga'',
  \end{equation}
is not an elementary dividing set.
\end{cor}
\begin{proof}
As above elementary dividing sets $\z_C$ determine basis vectors for $\Lambda^*H_1(\Si,\partial \Si;\FF_2)$ in a canonical way. Since $\e(\ga'')$ in Eqn. \eqref{trieq} must be a sum of the vectors determined by $\z_C$ and $\z_{C'}$ in this correspondence, it cannot be an elementary generator.
  \end{proof}

\begin{conjecture}
For any parameterization $\Ad$ of $\Si$, there is a map $\bar{\e}$, induced by $\e$, which is an isomorphism, as in the following diagram.
\begin{center}
\begin{tikzpicture}[scale=10, node distance=2.5cm]
\node (A2) {$\FF_2\inp{\Ob(\Ko_+(\Si))}$};
\node (B2) [below=1cm of A2] {$K_0(\Ko_+(\Si))$};
\node (C2) [right=1cm of B2] {$\Lambda^*H_1(F(\Ad),F(\partial \Ad); \FF_2)$};
\draw[->] (A2) to node {$\e$} (C2);
\draw[->] (A2) to node {$\pi$} (B2);
\draw[->] (B2) to node {$\bar{\e}$} (C2);
\end{tikzpicture}
\end{center}
In the diagram above $\pi$ is the quotient map found in the definition of $K_0$.
\end{conjecture}

\subsubsection{Relation to work of J. Murakami and O. Viro}\label{MVsec}

The representation theory of the quantum group $U_q(\mathfrak{sl}_2)$ at
$q^4 = 1$ determines a degenerate instance of the Chern-Simons topological
field theory that has been related to the Alexander polynomial \cite{Murakami, Viro}. The Jones-Wenzl projector $p_3 \in \Endo_{U_q(\mathfrak{sl}_2)}(V^{\ott 3})$ takes the form:
$$p_3 = \MPic{n3-1} -\frac{d}{d^2-1}\left(\,\MPic{n3-e1} + \MPic{n3-e2}\!\! \right) + \frac{1}{d^2-1}\left(\,\MPic{n3-e1e2} + \MPic{n3-e2e1}\!\! \right).$$
where $d = q+q^{-1}$. Taking $q = \sqrt{-1}$, gives $d = 0$ and $d^2 - 1 = -1$. This eliminates the middle term above, leaving
 the bypass triangle
$$p_3 = \MPic{n3-1} - \MPic{n3-e1e2} - \MPic{n3-e2e1}.$$
Since the righthand side should be zero, there is only a relationship between the contact geometry and representation theory after reducing by the Goodman-Wenzl ideal $\inp{p_3}$ \cite[Appendix]{Freedman}.

\newpage
\section{Comparison between categories associated to disks}\label{tiansec}

\newcommand{\Pow}{\aP}
\newcommand{\hi}{\bar}
\newcommand{\Q}{\aQ}
\newcommand{\R}{\aR}
\renewcommand{\P}{\aN}
\newcommand{\NL}{\aN}

In this section we show that the categories
associated to the disk $(D^2,2n)$ with $2n$ points by the Heegaard-Floer
theory $\alg(D^2,2n)$, the contact topology $\Co(D^2,2n)$ and the formal
contact construction are Morita equivalent.
$$\alg(D^2,2n) \cong \Co(D^2,2n) \cong \Ko_+(D^2,2n)$$
This is accomplished by choosing an arc parameterization $\zigzag_n$ of the
disk $(D^2,2n)$ so that the associated Heegaard-Floer category
$\alg(D^2,2n) \cong \alg(-\zigzag_n)$ has the same quiver presentation as
the algebraic contact category $\Co(D^2,2n) \cong \Yi_n$ studied by
Y. Tian. This equivalence is combined with Theorem \ref{zarevgenthm} to show
that both categories are Morita equivalent to the positive half of the
formal contact category $\Ko_+(D^2,2n)$. In this section, $n\geq 2$.

\subsection{The Heegaard-Floer categories associated to a disk}\label{HFdisksec}
In this section an arc diagram $\zigzag_n$ and an arc parameterization of
the disk $(D^2,2n)$ with $2n$ marked points by $\zigzag_n$ are
introduced. The Bordered Sutured Floer theory developed by R. Zarev
associates a dg category $\alg(\zigzag_n)$ to this parameterization. In
Section \ref{HFCodisksec}, we will find that this category is the same as
Y. Tian's quiver algebra $\R_n$. 

The disk will be oriented in the opposite direction of later sections. In
this way the boundary of the disk is oriented clockwise. When viewed from
above, as in the illustration below, each interval $\Ad_i\subset \partial D$
has a well-defined left direction (counterclockwise) and right direction
(clockwise). This terminology is used by the definition below.

\begin{defn}\label{zigzagdef}
  The {\em zig-zag arc diagram} $\zigzag_n$  is defined inductively as follows:
  \begin{enumerate}
  \item The arc diagram $\zigzag_2$ consists of two lines $\lns = \{ \Ad_1, \Ad_2\}$ and two points $\pts = \{a_1, a'_1\}$ where, $a_1 \in \Ad_1$, $a'_1\in \Ad_2$ and $M(a_1) = M(a'_1)$.
\item If $n$ is odd then $\zigzag_n$ is obtained from $\zigzag_{n-1}$ by adding a new line $\Ad_{n}$, containing the point $a_{n-1}$, to the right of the line $\Ad_{n-2}$ and adding the point $a'_{n-1}$ to the line $\Ad_{n-1}$ immediately to the left of $a'_{n-2}$.
\item If $n$ is even then $\zigzag_n$ is obtained from $\zigzag_{n-1}$ by adding a new line $\Ad_n$, containing the point $a'_{n-1}$, to the left of $\Ad_{n-2}$ and then adding the point $a_{n-1}$ to $\Ad_{n-1}$ to the right of the point $a_{n-2}$.
  \end{enumerate}
\end{defn}

If we imagine the line segments $\{\Ad_i\}_{i=1}^n$ to be embedded
sequentially along the real line $\RR$ then an orientation on each line
segment is induced by choosing an orientation of $\RR$; they all point
either to the left or to the right. The name zig-zag becomes clear after
rearranging the line segments into a zig-zag pattern.

\begin{center}
\begin{overpic}[scale=2]
{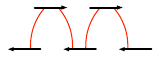}
\put(20,-5) {$\Ad_5$}
\put(70,-5) {$\Ad_3$}
\put(125,-5) {$\Ad_1$}
\put(42.5,52) {$\Ad_4$}
\put(95,52) {$\Ad_2$}

\put(125,25) {$h_1$}
\put(90,25) {$h_2$}
\put(51.5,25) {$h_3$}
\put(15,25) {$h_4$}
\end{overpic}
\end{center}
The arc diagram for $\zigzag_5$ is pictured above. The line labelled $\Ad_i$ is the $i$th  line segment in the construction from Definition \ref{zigzagdef}.  The lines $h_i$
connect the matched pairs $\{ a_i, a'_i \}$.
 If the illustration
above is understood to specify an embedding of the arc diagram into the
plane then thickening each of the components produces the parameterization
of the disk $(D^2, 2\cdot 5)$ with $10$ points pictured below.
\begin{center}
\begin{overpic}[scale=2]
{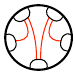}
\end{overpic}
\end{center}
Giving the plane the standard $\inp{x,y}$ orientation induces an orientation
on $(D^2,2n)$ in which the boundary is oriented clockwise.

The proposition below may be clear to readers who are more familiar with the algebras involved.

\begin{prop}
  The dg category $\alg(-\zigzag_n)$ has trivial differential: $d = 0$.
\end{prop}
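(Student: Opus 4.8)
The plan is to argue directly from R.~Zarev's definition of the strands algebra $\alg(-\zigzag_n)$ \cite{Zarev1}. Recall that $\alg(-\Ad)$ is an $\FF_2$-vector space whose basis consists of \emph{basic generators}: strand diagrams drawn on the line segments $\Ad_1,\dots,\Ad_n$ of the (orientation-reversed) arc diagram, in which each individual strand is confined to a single segment and is monotone with respect to the order induced by the orientation, subject to consistency with the matching $M$. The product is concatenation of diagrams, declared zero whenever a double crossing or a matching inconsistency is produced, and the differential $\partial$ sends a diagram to the sum, over all pairs of its strands that cross, of the diagram obtained by uncrossing (resolving) that pair, again with a term set to zero if the resolution acquires a double crossing. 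In particular $\partial$ is identically zero as soon as no basic generator of $\alg(-\zigzag_n)$ contains a pair of strands that cross; this reduction is the whole strategy.

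First I would extract, by induction on $n$ from Definition \ref{zigzagdef}, the single structural fact that is needed: every line segment of $\zigzag_n$ carries at most two of the points of $\pts$. Indeed $\Ad_1$ always carries the single point $a_1$, and the passage $\zigzag_{n-1}\rightsquigarrow\zigzag_n$ only creates a new one-point segment $\Ad_n$ and adds one new point to the segment $\Ad_{n-1}$ (previously a one-point segment), so $\Ad_1$ and $\Ad_n$ carry one point each and every intermediate segment carries exactly two. Reversing the orientations to form $-\zigzag_n$ merely reverses the order within each segment and changes no count, so the same holds for $-\zigzag_n$.

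The proof then finishes with a two-line case analysis. Fix a basic generator and two of its strands $s,s'$. If $s,s'$ lie on distinct line segments they occupy disjoint parts of the picture and cannot cross. If they lie on a common segment $\Ad_i$, then $\Ad_i$ carries at most two points $p\le q$, so each of $s,s'$ is a horizontal strand at $p$, a horizontal strand at $q$, or the monotone strand $p\to q$; two horizontal strands sit at distinct positions and are parallel, and the strand $p\to q$ has no point of $\pts$ strictly between its endpoints (there is no third point on $\Ad_i$), hence crosses nothing. So no basic generator contains a crossing, $\partial$ vanishes on the basis, and therefore $\partial=0$ on $\alg(-\zigzag_n)$. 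The step that deserves real care — the only genuine obstacle — is making sure Zarev's conventions are invoked correctly: that the strands of a basic generator of $\alg(-\Ad)$ really are confined to the individual line segments of $-\Ad$, and that $\partial$ has no contributions beyond resolutions of crossings, so that the ``short segment'' observation kills the entire differential rather than just part of it. (For contrast, the single long line segment of a pointed matched circle of genus $\ge 2$ carries many points, long strands exist, crossings occur and $\partial\ne 0$; it is precisely the sparsity forced by Definition \ref{zigzagdef} that makes $\alg(-\zigzag_n)$ formal.)
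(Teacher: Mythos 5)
Your proof is correct and rests on the same underlying observation as the paper's, namely that each line segment of $\zigzag_n$ carries at most two points, so the local strands algebras contain no crossings and hence carry no differential. The paper packages this structurally: it cites the fact that $\alg(-\zigzag_n)$ is a subalgebra of a tensor product of copies of $\alg(1)$ and $\alg(2)$ (a consequence of Zarev's decomposition of the arc algebra across line segments), notes that those small algebras have $d=0$, and concludes since tensor products and subalgebras inherit vanishing differential. You work directly with strand diagrams, proving the two-point bound by induction on Definition \ref{zigzagdef} and then observing crossings cannot occur. The two arguments are interchangeable; yours is more self-contained and makes the counting step explicit, while the paper's is shorter because it leans on Zarev's tensor decomposition. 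One small point worth tightening in yours: for the case of two strands on a common two-point segment $\{p<q\}$, you should also dispose of the possibility of two copies of the monotone strand $p\to q$, but this is automatic since a strand diagram is a partial bijection, so at most one strand can start at $p$; your observation about there being no third point then finishes the case.
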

\begin{proof}
This follows from the definition of the differential. In more detail, by construction, as an algebra with idempotents, the dg category
  $\alg(\zigzag_n)$ is a subalgebra of a tensor product of copies of strands
  algebras $\alg(1)$ and $\alg(2)$. Neither of these algebras have
  differentials. Any tensor product of algebras without differentials does
  not have a differential. Any subalgebra of an algebra without differential
  does not have a differential, see also \cite[Prop. 9.2]{Zarev1}.
\end{proof}

Without a differential, the dg category $\alg(-\zigzag_n)$ is a category. The definition below comes from \cite[\S 2.3]{Zarev1}.  It is summarized in Def. \ref{poopdef}.

First note that the idempotents in this construction correspond to the
objects of the category $\alg(-\zigzag_n)$, the idempotents are indexed by a
choice of a subset
$$S\subset \{ h_1,\ldots,h_{n-1} \}$$
of the 1-handles which identify matched pairs in the arc diagram $\zigzag_n$ \cite[Def. 2.5]{Zarev1}.  

In Definition \ref{zigzagdef} of $\zigzag_n$ above, there are $n$ line segments
 $\{\Ad_1,\ldots, \Ad_n\}$. On the segment $\Ad_1$, there is only one point $a_1$.
If $n$ is even then $\Ad_n$ contains only one point $a'_{n-1}$. If $n$ is odd then $\Ad_n$ contains only the point $a_{n-1}$. 
The line segment $\Ad_k\in \{\Ad_2,\ldots,\Ad_{n-1}\}$ contains the two points described below.  
\begin{equation}\label{eveneqn}
a'_k a'_{k-1} \normaltext{  for $k$ even }  \conj{ or } a_k a_{k+1} \normaltext{  for $k$ odd } \end{equation}
Since the algebra $\alg(1)$ only contains the identity element, the non-identity elements in the parts of $\alg(-\zigzag_n)\subset \alg(1)\ott \alg(2)^{\ott n-2} \ott \alg(1)$ correspond to the $\alg(2)$-tensor factors. Each such factor contains a Reeb chord $\rho_{k,k+1}$ or $\rho_{k+1,k}$. If the line segment contains the points $a'_{k+1} a'_k$ then the Reeb chord $\rho_{k,k+1}$ connects $\rho^-_{k,k+1} = a'_k$ to $\rho^+_{k+1,k} = a'_{k+1}$. If the line segment contains the points $a_{k} a_{k+1}$ then the Reeb chord $\rho_{k+1,k}$ connects $\rho^-_{k+1,k} = a_{k+1}$ to $\rho^+_{k+1,k} = a_{k}$. Since the $k$th 1-handle $h_k$ corresponds to the matching of the pair $a_k$ and $a'_k$, the Reeb chords $\rho_{k,k+1}$ and $\rho_{k+1,k}$ correspond to maps:
\begin{equation}\label{rhogeneqn}
\rho_{k,k+1} : h_k \to h_{k+1} \conj{and} \rho_{k+1,k} : h_{k+1} \to h_k
\end{equation}
Translating Equation \eqref{eveneqn}
above into the language of Equation \eqref{rhogeneqn} tells us when such maps can be found in the category $\alg(-\zigzag_n)$. If $n$ is even then there are maps:
$$h_{n-1} \xto{\rho_{n-1,n-2}} h_{n-2} \xfrom{\rho_{n-3,n-2}} h_{n-3} \to \cdots \from h_3 \xto{\rho_{3,2}} h_2 \xfrom{\rho_{1,2}} h_1$$
and if $n$ is odd then there are maps:
$$h_{n-1} \xfrom{\rho_{n-2,n-1}} h_{n-2} \xto{\rho_{n-2,n-3}} h_{n-3} \from \cdots \from h_3 \xto{\rho_{3,2}} h_2 \xfrom{\rho_{1,2}} h_1.$$
Increasing the number $n$ by one has the effect of adding one new Reeb chord. 

The generators of the full category $\alg(-\zigzag_n)$ are obtained by extending each Reeb chord by identity in all possible ways \cite[Def. 2.9]{Zarev1}. In more detail, if $S = h_{i_1} h_{i_2}\cdots h_{i_j} \cdots h_{i_{k-1}} h_{i_k}$ is a subset of 1-handles which have been ordered so that $i_j<i_{j+1}$ then there is a generator:
\begin{equation}\label{diskgeneqn}
h_{i_1} h_{i_2}\cdots h_{i_j} \cdots h_{i_{k-1}} h_{i_k}\to h_{i_1} h_{i_2}\cdots h_{i_j \pm 1} \cdots h_{i_{k-1}} h_{i_k}
\end{equation}
in $\alg(-\zigzag_n)$ when there is a Reeb chord $\rho_{i_j,i_j \pm 1} : h_{i_j} \to h_{i_j \pm 1}$ as above and the 1-handle $h_{i_{j\pm 1}}$ isn't contained in set $S$:
$$i_{j \pm 1} \not\in \{ i_1,i_2,\ldots,i_k\}.$$

None of the relations satisfied by the strands algebras apply in our context because the Reeb chords are contained in independent strands algebras $\alg(2)$ of order two. There is only one relevant family of relations, stemming from the observation that maps applied to independent tensor factors commute.
\begin{equation}\label{comeqn}
\begin{tikzpicture}[scale=10, node distance=2.5cm]
\node (B)  {$\cdots h_{i_j} \cdots h_{i_\ell} \cdots $};
\node (Bp) [right=1.5cm of B] {};
\node (Bu) [above=.5cm of Bp] {$\cdots h_{i_j \pm 1} \cdots h_{i_\ell} \cdots $};
\node (Bd) [below=.5cm of Bp] {$\cdots h_{i_j } \cdots h_{i_\ell \pm 1} \cdots $};
\node (Bq) [right=1.5cm of Bp] {$\cdots h_{i_j \pm 1} \cdots h_{i_\ell \pm 1} \cdots $};
\draw[->] (B) to node {} (Bu);
\draw[->] (B) to node [swap] {} (Bd);
\draw[->] (Bu) to node {} (Bq);
\draw[->] (Bd) to node [swap] {} (Bq);
\end{tikzpicture}\end{equation}
Said differently, whenever generators can be applied out-of-order to form a square, as pictured above, this square must commute.

The definition below summarizes the discussion above.

\begin{definition}\label{poopdef}
$\alg(-\zigzag_n)$ is the dg category with $d=0$. The objects $\Ob(\alg(-\zigzag_n) = \{ S : S\subset \{h_1,\ldots,h_{n-1}\} \}$ are subsets of the set of arcs in Def. \ref{zigzagdef}. We write $S = \prod_{h_{i_k}\in S} h_{i_k}$ for any $S\in \Ob(\alg(-\zigzag_n))$. The category $\alg(-\zigzag_n)$ is generated by maps of the form Eqn. \eqref{diskgeneqn} subject to relations in Eqn. \eqref{comeqn}.
  \end{definition}

The examples below will be compared to Examples \ref{q3ex} and \ref{q4ex} in  Section \ref{HFCodisksec} later.

\begin{example}\label{hq3ex}
The structure of $\alg(-\zigzag_3)$ can be pictured in the following way:
$$\begin{tikzpicture}[scale=10, node distance=2.5cm]
\node (A) {$\emptyset$};
\node (B) [right=1cm of A] {$h_1$};
\node (C) [right=1cm of B] {$h_2$};
\node (D) [right=1cm of C] {$h_1 h_2$};
\draw[->] (B) to node {$\rho_{1,2}$} (C);
\end{tikzpicture}$$
\end{example}

\begin{example}\label{hq4ex}
The structure of $\alg(-\zigzag_4)$ is illustrated by the diagram below:
$$\begin{tikzpicture}[scale=10, node distance=2.5cm]
\node (A) {$\emptyset$};
\node (B) [right=1cm of A] {$h_1h_3$};
\node (Bp) [right=1.5cm of B] {};
\node (Bu) [above=.5cm of Bp] {$h_2h_3$};
\node (Bd) [below=.5cm of Bp] {$h_1 h_2$};

\node (Cp) [right=1cm of Bp] {};
\node (Cu) [above=.5cm of Cp] {$h_1$};
\node (Cd) [below=.5cm of Cp] {$h_3$};
\node (D) [right=1.5cm of Cp] {$h_2$};
\node (E) [right=1cm of D] {$h_1 h_2 h_3$};

\draw[->] (B) to node {$\rho_{1,2}$} (Bu);
\draw[->] (B) to node [swap] {$\rho_{3,2}$} (Bd);

\draw[->] (Cu) to node {$\rho_{1,2}$} (D);
\draw[->] (Cd) to node [swap] {$\rho_{3,2}$} (D);
\end{tikzpicture}$$
\end{example}

\begin{remark}
Bordered Sutured theory usually associates different algebras to
  different parameterizations of a surface. The categories of modules associated to these algebras are equivalent. In this sense, the algebras associated to surfaces are Morita equivalent, see Appendix.
In order to understand why this is the case, consider that the mapping cylinder 3-manifolds associated to a diffeomorphism between parameterizations and its inverse determine a pair of bimodules \cite[\S 8]{Zarev1}. Product with a bimodule determines a functor between modules over algebras. The composition of functors gives the bimodule associated to identity which is algebraically identity \cite[\S 8.6]{Zarev1}. See also \cite{Zarev2}.

In particular, there is an arc
  parameterization $\aW_n$ \cite[Ex. 9.1]{Zarev1} for which there is an
  isomorphism of dg categories $\alg(\aW_n) \cong \alg(n-1)^\op$
  \cite[Prop. 9.1]{Zarev1}, where $\alg(n-1)$ is the strands algebra
  \cite[\S 3.1]{LOT}. Therefore, $\alg(-\zigzag_n) \cong \alg(n-1)^\op$ in
  $\Hmo$.
\end{remark}

\subsection{The contact category associated to a disk}\label{Codisksec}
Here we introduce the category $\Yi_n$ that Y. Tian associates to
the disk with $2n$ boundary points \cite{YT1}. We will not discuss gradings.

\subsubsection{Indexing multicurves with nil-Temperley-Lieb notation}\label{nilTLsec}
Monomials in the nil-Temperley-Lieb algebra, will be used to denote
multicurves $\ga\subset (D^2,2n)$ in the disk. In particular, multicurves
determined by monomoials $e_{i_1} e_{i_2} \cdots e_{i_k}$, which have been
ordered, so as to satisfy $i_1 < i_2 < \cdots < i_k$, correspond to the
objects in Y. Tian's construction, see Definition \ref{YTquiverdef}.

\begin{defn}\label{Pdef}
    The {\em nil-Temperley-Lieb algebra} $\P_n$ is the $k$-algebra on generators:
    $e_i$, $1\leq i < n$, subject to the relations:
\begin{enumerate}
\item $e_i^2 = 0$ for $1\leq i < n$
\item $e_i e_j = e_j e_i$ for $\vnp{i - j} > 2$ and
\item $e_i e_{i\pm 1} e_i = e_i$.
\end{enumerate}
\end{defn}

If the ground ring $k$ is changed to $\ZZ[q,q^{-1}]$ and the first relation
is changed from $e_i^2 = 0$ to $e_i^2 = q+q^{-1}$ then the algebra $\P_n$
introduced above becomes the well-known Temperley-Lieb algebra
$\mathcal{T\! L}_n$, see \cite{KL}. 

The relationship
between the Temperley-Lieb algebra and the planar algebra of multicurves
extends to the nil-variant $\P_n$ introduced above. There is a basis for the algebra $\P_n$ consisting of monomials which is in
one-to-one correspondence with isotopy classes of boundary connected
multicurves in a pointed oriented disk $(D^2,2n)$. This can be seen after
each generator $e_i$ is identified with a multicurve $\ga(e_i)$.
$$e_i\mapsto \ga(e_i)$$
If the disk is pictured so that the first $n$ points are situated on the top of the disk and
the last $n$ points are situated on the bottom of the disk then all of the
strands of $\ga(e_i)$ are vertical except for two which connect the $i$th and
$(i+1)$-st points in each collection. The products, $\ga(e_ie_j) = \ga(e_i)\ga(e_j)$, of generators correspond to vertically stacking the multicurves. For instance, when $n=3$ we have the following
pictures:
$$\ga(1) = \CPPic{n3-1}\hspace{-.25in},\quad \ga(e_1) = \CPPic{n3-e1}\quad \normaltext{ or }\quad \ga(e_1 e_2) = \CPPic{n3-e1e2}\hspace{-.25in}.$$
In the image of the map $\ga$, the second and third relations in
Definition \ref{Pdef} correspond to isotopy and the first relation implies
that any multicurve containing a homotopically trivial component is zero.

This observation can be used to construct a set map $\ga$ from the monomials
the nil-Temperley-Lieb algebra $\P_n$ to positive dividing sets on
$(D^2,2n)$. Since all of the defining relations for $\P_n$ preserve
monomiality: the product of monomials is a monomial and each monomial
$x\in\P_n$ corresponds to a multicurve $\ga(x)$. After signing the regions
of $D^2\backslash \ga(x)$, this determines a dividing set on the disk.
Knowledge of the map $\ga$ is assumed throughout the next section.

\subsubsection{Y. Tian's disk category}\label{tianconstructionsec}

Y. Tian's category $\Yi_n$ is introduced by the sequence of definitions
below. The construction presented here is equivalent to the original \cite{YT1}. However, we will use the algebra $\P_n$ to
express the presentation in a more familiar notation.

\begin{defn}\label{YTquiverdef}
  The quiver $\Q_n$ has vertices $V := \{ S = \{ i_1 < i_2 < \cdots < i_k : 1\leq i_j < n, j = 1,\ldots,k \}$ and edges
  $$E(S,T) := \left\{ \begin{array}{ll}
    \{ \theta_p \} & \normaltext{ if } \vert T \vert = \vert S \vert + 2 \normaltext{ and } T = S \cup \{p,p+1\}\\
    \emptyset & \normaltext{ otherwise }
    \end{array}\right.$$

In more detail, the vertices $S$ of the quiver $\Q_n$ are the ordered monomials:
$$e_S = e_{i_1} e_{i_2} \cdots e_{i_k} \in \P_n \conj{ where } S = \{i_1< i_2 <\cdots <i_k\}.$$
and $1\leq i_j < n$ for $j = 1,\ldots,k$ in the nil-Temperley-Leib algebra. There is an edge
$\th_p : e_S \to e_T$ from $e_S$ to $e_T$ when the set $T$ can be obtained from the set $S$ by adjoining the disjoint subset $\{p,p+1\}$. 
\end{defn}

  Before introducing the category $\Yi_n$, the definition above is
  illustrated by the examples below.

\begin{example}\label{q3ex}
  When $n=3$, the quiver $\Q_3$ assumes a rather unassuming form:
$$\begin{tikzpicture}[scale=10, node distance=2.5cm]
\node (A) {$e_1$};
\node (B) [right=1cm of A] {$1$};
\node (C) [right=1cm of B] {$e_1 e_2$};
\node (D) [right=1cm of C] {$e_2$};
\draw[->] (B) to node {$\th_1$} (C);
\end{tikzpicture}$$
\end{example}

\begin{example}\label{q4ex}
When $n = 4$, the quiver $\Q_4$ is more complicated:
$$\begin{tikzpicture}[scale=10, node distance=2.5cm]
\node (A) {$e_1 e_3$};
\node (B) [right=1cm of A] {$1$};
\node (Bp) [right=1.5cm of B] {};
\node (Bu) [above=.5cm of Bp] {$e_1 e_2$};
\node (Bd) [below=.5cm of Bp] {$e_2 e_3$};

\node (Cp) [right=1cm of Bp] {};
\node (Cu) [above=.5cm of Cp] {$e_1$};
\node (Cd) [below=.5cm of Cp] {$e_3$};
\node (D) [right=1.5cm of Cp] {$e_1 e_2 e_3$};
\node (E) [right=1cm of D] {$e_2$};

\draw[->] (B) to node {$\th_1$} (Bu);
\draw[->] (B) to node [swap] {$\th_2$} (Bd);

\draw[->] (Cu) to node {$\th_2$} (D);
\draw[->] (Cd) to node [swap] {$\th_1$} (D);
\end{tikzpicture}$$
\end{example}

Each arrow $\th_p : e_S \to e_T$
  corresponds to a bypass move $\ga(e_S)\to \ga(e_T)$ between the multicurves $\ga(e_S)$ and $\ga(e_T)$, involving the $p$th  and $p+1$st regions in the disk, see Equation \eqref{tianbypasseqn}.

The disk category $\R_n$ is the category generated by the graph $\Q_n$, modulo
the relation that compositions of disjoint bypass moves commute.

\begin{defn}\label{diskdef}
The {\em disk category} $\R_n$ is the $k$-linear category generated by the graph $\Q_n$ 
subject to the relations:
$$\th_p \th_q = \th_q \th_p \conj{ for each pair of arrows } \th_p \th_q, \th_q \th_p : e_S \to e_T \normaltext{ in } \Q_n. $$
\end{defn}

The disk category $\R_n$ can be viewed as a dg category with $d=0$. Recall the
notion of pretriangulated hull from Section \ref{trisec}.

\begin{defn}\label{hulldiskdef}
The category $\Yi_n$ associated to the disk $(D^2,2n)$ is the
  pretriangulated hull of the disk category $\R_n$:
$$\Yi_n = \R_n^\pt.$$
\end{defn}

\subsection{Relationship between the contact category and the Heegaard-Floer category}\label{HFCodisksec}
Here we show that the category $\alg(-\zigzag_{n})$ 
found in Section \ref{HFdisksec} is isomorphic to Y. Tian's disk category $\R_n$
from Section \ref{tianconstructionsec}.

\begin{theorem}
$$\R_n \xto{\sim} \alg(-\zigzag_n)$$
\end{theorem}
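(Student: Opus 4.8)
The plan is to observe that both sides are $k$-linear categories with vanishing differential presented in the same shape — as the path category of a finite quiver modulo the relations that all ``out-of-order squares'' commute — and then to exhibit an isomorphism of the underlying quivers that carries one family of squares to the other. For $\R_n$ this presentation is Definition \ref{diskdef}. For $\alg(-\zigzag_n)$ it is the content of Section \ref{HFdisksec}: the differential vanishes, the objects are the idempotents indexed by subsets of $\{h_1,\dots,h_{n-1}\}$, the generating morphisms are the ``extend a Reeb chord by identity'' maps of Equation \eqref{diskgeneqn}, and the strands-algebra relations are vacuous because each Reeb chord $\rho_{k,k\pm1}$ lies in an independent order-two factor $\alg(2)$, leaving only the commuting squares \eqref{comeqn}. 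Since a quiver isomorphism automatically sends squares to squares, matching the quivers will suffice.

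First I would fix the bijection on objects. Both quivers have vertex set the power set of $\{1,\dots,n-1\}$: for $\R_n$ via $S\mapsto e_S$, and for $\alg(-\zigzag_n)$ via $S\mapsto$ the idempotent supported on $\{h_i:i\in S\}$. Let $Z_n=\{p: 1\le p\le n-1,\ p\text{ odd}\}$ and set $\psi(e_S)=S\mathbin{\triangle} Z_n$, where $\triangle$ is symmetric difference; this is a bijection, being translation in the group $(2^{\{1,\dots,n-1\}},\triangle)$. Twisting by $Z_n$ is what converts the ``adjoin a disjoint consecutive pair'' moves of $\Q_n$ into the ``slide a handle by one'' moves of $\alg(-\zigzag_n)$.

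The main step is to check that the arrows correspond. An arrow of $\Q_n$ is $\th_p:e_S\to e_{S\cup\{p,p+1\}}$, and it exists precisely when $1\le p\le n-2$ and $p,p+1\notin S$. Since $p,p+1\notin S$ we have $\psi(e_{S\cup\{p,p+1\}})=\psi(e_S)\mathbin{\triangle}\{p,p+1\}$, and, as exactly one of the consecutive integers $p,p+1$ lies in $Z_n$, this symmetric difference flips membership of the element of $\{p,p+1\}\cap Z_n$ (which lies in $\psi(e_S)$, because $p,p+1\notin S$) and of the element of $\{p,p+1\}\setminus Z_n$ (which does not). Thus $\psi$ sends $\th_p$ to the move that slides the handle indexed by $\{p,p+1\}\cap Z_n$ onto the one indexed by $\{p,p+1\}\setminus Z_n$, and this is exactly the generator of $\alg(-\zigzag_n)$ attached to the Reeb chord between $h_p$ and $h_{p+1}$ — provided that chord points from the odd-indexed handle to the even-indexed one. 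The two explicit lists of Reeb-chord maps displayed after Equation \eqref{rhogeneqn} confirm this for all $n$: whether $n$ is even or odd, the chords form the pattern $h_1\to h_2\leftarrow h_3\to h_4\leftarrow\cdots$, always pointing odd$\to$even (Examples \ref{q3ex}, \ref{hq3ex} and \ref{q4ex}, \ref{hq4ex} are the small cases). Conversely every generator in \eqref{diskgeneqn} arises this way, and since each quiver carries at most one arrow between any ordered pair of vertices, $\psi$ is an isomorphism of quivers.

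Finally, a quiver isomorphism identifies the out-of-order squares on the two sides, hence identifies the defining relations of $\R_n$ (Definition \ref{diskdef}) with those of $\alg(-\zigzag_n)$ (Equation \eqref{comeqn}), so $\psi$ descends to the asserted isomorphism $\R_n\xto{\sim}\alg(-\zigzag_n)$ of $k$-linear categories with vanishing differential. The only real friction I anticipate is the parity bookkeeping in the previous paragraph — keeping straight which of $p,p+1$ is odd and matching it against the chord directions coming out of Definition \ref{zigzagdef} and Equations \eqref{eveneqn}--\eqref{oddeqn} — together with making sure that the description of $\alg(-\zigzag_n)$ by generators \eqref{diskgeneqn} and relations \eqref{comeqn} is genuinely complete; everything else is formal once the two presentations are recognized to have the same shape.
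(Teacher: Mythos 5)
Your proof is correct and takes essentially the same route as the paper: both establish a bijection between vertices, match the generating arrows, and observe that the only relations on either side are the commuting out-of-order squares, which a quiver isomorphism carries across automatically. The one genuine difference is presentational: where the paper describes the object bijection geometrically (start from the all-odd idempotent $h_1h_3\cdots h_{n-1}$, then cut along odd generators and glue along even ones, with a picture carrying the argument), you package the same map as the explicit formula $\psi(e_S)=S\mathbin{\triangle} Z_n$, which makes the well-definedness and bijectivity immediate and reduces the arrow check to a short parity computation. This is a cleaner and more self-contained statement of the paper's bijection and is worth having. You also correctly flag the one place where both accounts lean on an assertion rather than a proof — that the generators of Equation \eqref{diskgeneqn} together with the squares \eqref{comeqn} give a complete presentation of $\alg(-\zigzag_n)$ — which the paper likewise states without further argument, so you are not introducing a new gap, merely being honest about the one already there.
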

\begin{proof}
  The similarities between Examples \ref{hq3ex}, \ref{q3ex} and Examples
  \ref{hq4ex}, \ref{q4ex} are suggestive. We will discuss the case when $n$
  is even, the case when $n$ is odd is similar. We first give a bijective
  correspondence between the objects in either category. After this the
  generators in either category are related to one another by representing
  each by a geometric bypass moves.

 There is a one-to-one correspondence between the objects in each category.
 Recall that for $\R_n$ the objects $\Ob(\R_n) = V(\Q_n) = \{ e_S : S = \{
 i_1 < i_2 < \dots < i_k \} \}$ which correspond to multicurves in the disk
 determined by the product $e_{S} = e_{i_1} \cdots e_{i_k}$ in the
 nil-Temperley-Lieb algebra. For $\alg(-\zigzag_n)$, the objects are
 $\Ob(\alg(-\zigzag_n)) = \{ S : S \subset \{ h_1, h_2, \ldots, h_{n-1} \}
 \}$ which correspond to a selection of $1$-handles in the zig-zag diagram. The
 maps in the next two paragraphs are constructed using these two topological
 interpretations for $S$.

First we construct a map  $\Phi : \Ob(\R_n) \to \Ob(\alg(-\zigzag_n))$.
In this correspondence the identity diagram $1\in\P_n$ corresponds to selecting all of the odd 1-handles, $\Phi(1) = h_1 h_3 \cdots h_{n-1}$.
  Suppose that $e_S=e_{i_1} e_{i_2} \ldots e_{i_k}\in \P_n$ is an ordered
  monomial. Then to construct the selection of 1-handles in $\Ob(\alg(-\zigzag_n))$ associated to $e_S$ we perform surgery on this identity surface $h_1 h_3 \cdots h_{n-1}$ along the arcs pictured below for each $e_{i_k}$ appearing in $e_S$.

\begin{center}
\begin{overpic}[scale=2]
{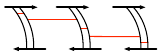}
\put(3,37) {$e_1$}
\put(43,37) {$e_2$}
\put(87,23) {$e_3$}
\put(105,22) {$e_4$}
\put(147,12) {$e_5$}
\end{overpic}
\end{center}
After performing this surgery, there is a uniquely determined set $S \subset \{h_1,\ldots,h_{n-1}\}$ of 1-handles in the arc diagram $\zigzag_n$ corresponding to this surface; this is the map from ordered monomials to subsets $S$ of the set of 1-handles.

Now we construct an inverse map  $\Psi : \Ob(\alg(-\zigzag_n)) \to \Ob(\R_n)$.
The empty set of 1-handles $\emptyset$ corresponds to the product of the odd generators $\Psi(\emptyset) = e_1e_3 \cdots e_{n-1}$. If $h_{i_1} h_{i_2} \cdots h_{i_k}$ is an arbitrary selection of 1-handles then gluing each 1-handle $h_{i_j}$ into the picture below, in the indicated fashion, uniquely determines a multicurve associated to a positive monomial.
\begin{center}
\begin{overpic}[scale=2]
{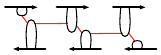}
\put(8,27) {$h_5$}
\put(43,35) {$h_4$}
\put(80,30) {$h_3$}
\put(98,25) {$h_2$}
\put(135,15) {$h_1$}
\end{overpic}
\end{center}

The maps introduced above are inverse. There is a bijection between the objects of either category. Observe that performing the odd $e_i$ surgeries in the first illustration above produces the picture below it. From this observation the following two rules below can be deduced: 
\begin{enumerate}
\item If $i$ is odd then the effect of choosing or not choosing $e_i$ corresponds to removing or adding $h_{n-i}$. 
\item If $i$ is even then the effect of choosing or not choosing $e_i$ corresponds to adding or removing $h_{n-i}$. 
\end{enumerate}
Here it is in algebraic notation.
$$\Phi(e_{i_1} e_{i_2} \cdots e_{i_k}) = \{ h_{n-s} : \exists r, s = i_r\,\, \mathrm{ and }\,\, s\,\, \mathrm{ even } \} \\
\cup \{ h_{n-s} : \forall r, s \neq i_r\,\, \mathrm{ and }\,\, s\,\, \mathrm{ odd } \}$$
$$\Psi(\{h_{i_1}, h_{i_2}, \ldots, h_{i_k}\}) = \{ e_{n-s} : \exists r, s = i_r\,\, \mathrm{ and }\,\, s\,\, \mathrm{ even } \} \\
\cup \{ e_{n-s} : \forall r, s \neq i_r\,\, \mathrm{ and }\,\, s\,\, \mathrm{ odd } \}$$
The variable $r$ is restricted to the relevant subset of indices and the subscripts of a word $e_{i_1} e_{i_2} \cdots e_{i_k}$ are placed in order so as to coincide with conventions. These rules determine a bijection.

If $w,w'\in\P_n$ are ordered monomials then an arrow $\th_p : ww' \to w e_p e_{p+1} w'$ in the graph $\Q_n$ corresponds to the bypass move $\th_p : \ga(ww') \to \ga(w e_p e_{p+1} w')$ pictured below,
\begin{equation}\label{tianbypasseqn}
\th_p = \CPPic{tianbypass}
\end{equation}
For example, after a rotation, the only arrow in the quiver $\Q_3$ corresponds to the
bypass illustrated before Definition \ref{disjdef}. On the other hand, the basic Reeb chords: $\rho_{k,k+1} : h_k\to h_{k+1}$ and $\rho_{k+2,k+1} : h_{k+2} \to h_{k+1}$ from Section \ref{HFdisksec} correspond to the pictures:
\begin{equation}\label{reebcorrespeqn}
\BCPPic{reebone}\conj{ and } \BCPPic{reebtwo}
\end{equation}
so that the two combinatorial notions perform the same function between
multicurves in the correspondence between the objects.

There are no relations in either category besides the commutativity of diagrams in Equation \ref{comeqn} and Definition \eqref{diskdef}.
\end{proof}

\subsection{Relationship between the disk category and the formal contact category}\label{reldisksec}

In this section we will construct a Morita equivalence between the
Heegaard-Floer category $\alg(-\zigzag_n)$ considered in Section
\ref{HFdisksec} and the formal contact category $\Ko_+(D^2,2n)$.

The discussion in prior sections sufficies to define a functor:
$$\mu : \alg(-\zigzag_n) \to \Ko_+(D^2,2n).$$
To each collection of 1-handles $C = h_{i_1} h_{i_2} \cdots h_{i_k}$ we
associate the elementary generator $\z_C \in \Ob(\PKo_+(D^2,2n))$. The basic
Reeb chords correspond to the bypass moves pictured in Equation
\eqref{reebcorrespeqn} above. Composing this functor with the quotient map $Q : \PKo_+(D^2,2n)\to \Ko_+(D^2,2n)$ yields $\mu$ above.

\begin{theorem}\label{tianthm}
The functor $\mu : \alg(-\zigzag_n) \to \Ko_+(D^2,2n)$ determines a Morita equivalence.
\end{theorem}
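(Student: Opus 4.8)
The plan is to verify the two conditions that make a dg functor a Morita equivalence: that $\mu$ is quasi-fully faithful, and that its essential image generates the target as a pretriangulated category up to direct summands. First I would replace $\alg(-\zigzag_n)$ by the disk category $\R_n$ via the isomorphism of Section \ref{HFCodisksec}; since $\R_n$ has zero differential, $\mu$ becomes the functor that sends an ordered monomial $e_S\in\P_n$ to the positive elementary dividing set $\z_C$ attached to the corresponding collection $C$ of $1$-handles of $\zigzag_n$, and sends each arrow $\th_p$ of the quiver $\Q_n$ (Definition \ref{YTquiverdef}) to the bypass move of Equation \eqref{tianbypasseqn}. Generation is then immediate from Corollary \ref{zargencor}: the positive elementary dividing sets $\Zi_+(\zigzag_n)$ generate $\Ko_+(D^2,2n)$, and these are precisely the objects in the image of $\mu$, so after idempotent completion the image generates in $\Hmo$.

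The substance of the argument is quasi-full-faithfulness, i.e. that
$$\Hom_{\R_n}(e_S,e_T) \longrightarrow \Hom_{\Ko_+(D^2,2n)}(\mu(e_S),\mu(e_T))$$
is a quasi-isomorphism for all ordered monomials $e_S, e_T$. Since $\R_n$ has zero differential, this amounts to showing that the right-hand morphism complex has cohomology concentrated in degree $0$, equal to the space of paths in $\Q_n$ from $e_S$ to $e_T$ modulo the relations of Definition \ref{diskdef}. Here I would apply Proposition \ref{sesprop} once for each bypass triangle occurring in $\Xi$: it presents $\Ko_+(D^2,2n)=L_\Xi\PKo_+(D^2,2n)^\pt$ as the Drinfeld quotient of $\PKo_+(D^2,2n)^\pt$ by the thick subcategory generated by the double-cone objects of those bypass triangles. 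Morphism complexes in a Drinfeld quotient admit a standard bar-type description, so $\Hom_{\Ko_+(D^2,2n)}(\mu(e_S),\mu(e_T))$ is computed by a complex built from the $\Hom$-spaces of $\PKo_+(D^2,2n)^\pt$ --- which, between elementary dividing sets, are spanned by isotopy classes of composable bypass moves modulo the cap and disjointness relations of Definition \ref{pkosdef} --- together with the contracting homotopies adjoined at each double cone. The remaining task is combinatorial: to put in normal form the strings of bypass moves between elementary dividing sets on $(D^2,2n)$ relative to the zig-zag parameterization, in the spirit of the proof of Theorem \ref{zarevgenthm}, and to show that every string other than a monotone word building $e_S$ up to $e_T$ is a boundary, using the capped-bypass relations and the triangle relations behind Proposition \ref{otprop} and Corollary \ref{otpropcor}. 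The monotone words, taken modulo commuting disjoint bypasses, form a basis of $\Hom_{\R_n}(e_S,e_T)$, so this identifies the bar complex with its degree-zero cohomology and establishes quasi-full-faithfulness, hence the Morita equivalence.

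The hard part is exactly this last step --- ruling out higher cohomology of the $\Hom$-complexes between elementary dividing sets, where the zig-zag combinatorics enters essentially. An alternative I would develop in parallel is to construct the quasi-inverse directly from the universal property of $\KoS$ in Section \ref{formalcatsec}. One checks that $\Yi_n=\R_n^\pt$ --- with each dividing set $\ga$ on $(D^2,2n)$ sent to the convolution of elementary generators furnished by Theorems \ref{surfacetheorem} and \ref{zarevgenthm} (so that $\z_C\mapsto e_S$), and each bypass move sent to the induced morphism of convolutions, extending $\th_p\mapsto\th_p$ --- satisfies properties $(1)$--$(4)$ of that universal property; this yields a functor $\Ko_+(D^2,2n)\to\Yi_n$ in $\Hqe$, which one then verifies is inverse to $\mu^\pt$ on the generating objects $\z_C$. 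Either route reduces the theorem to the same vanishing statement for the $\Hom$-complexes between elementary dividing sets.
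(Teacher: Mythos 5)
Your proposal follows the same broad strategy as the paper --- reduce to generation via the Zarev generators (Corollary \ref{zargencor}) plus quasi-full-faithfulness, and attack the latter by identifying the Postnikov localization with a Drinfeld quotient via Proposition \ref{sesprop}. But you explicitly stop short of the crucial step. You write that ``the remaining task is combinatorial'' and that ``the hard part is exactly this last step --- ruling out higher cohomology of the $\Hom$-complexes between elementary dividing sets,'' and you do not carry it out. That is where the theorem actually lives, so as written the proposal has a genuine gap.

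The paper closes this step by a shortcut rather than a bar-complex computation. First it reduces to a \emph{single} Postnikov localization using commutativity of homotopy pushouts, $L_S L_{S'}\eC \cong L_{S\amalg S'}\eC \cong L_{S'}L_S\eC$, so it suffices to see that one localization $Q : \eC \to L_S\eC$ induces quasi-isomorphisms $\Hom_\eC(\z_C,\z_{C'}) \to \Hom_{L_S\eC}(\z_C,\z_{C'})$. By Proposition \ref{sesprop} this single localization is the Drinfeld quotient by the thick subcategory generated by a single double-cone object $\kr$; the only new morphism adjoined is the contracting homotopy $h$ with $dh = 1_\kr$. The paper then observes that this only kills cycles passing through $\kr$, and since $\kr$ is a non-elementary twisted complex (never of the form $\z_C$), the degree-zero cohomology of $\Hom(\z_C,\z_{C'})$ is untouched. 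Before this, the paper also pins down the degree-zero cycles in the source: only the Reeb-chord bypasses of Equation \eqref{reebcorrespeqn} survive the cap relation (1) of Definition \ref{pkosdef}, so the morphism spaces in $\PKo_+(D^2,2n)$ between elementary generators consist exactly of compositions of these, matching the quiver presentation of $\R_n\cong\alg(-\zigzag_n)$. Your bar-complex description is consistent with this, and would if completed give a more hands-on verification, but you would still need precisely the observation the paper makes --- that the new bar classes factoring through each $\kr$ do not contribute cycles or boundaries between the $\z_C$'s --- and that is the content you postponed. Your second, universal-property route to a quasi-inverse is a legitimate alternative not taken in the paper, but you likewise only sketch it, and verifying that $\Yi_n$ satisfies properties (1)--(4) with the proposed assignment $\z_C\mapsto e_S$ would again reduce to the same Hom-complex comparison.
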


The proof of the theorem will use the fact that if $\eA$ and $\eC$ are small
dg categories then $\eA$ is Morita equivalent to $\eC$ when $\eC$ is
quasi-equivalent to a full dg subcategory $\eB$ of the category of $\eA$
whose objects form a set of small generators. This is a special case of a
more general statement \cite[Thm. 8.2]{Kellerder}.

\begin{proof}
Using Theorem \ref{zarevgenthm}, it suffices to check that for each pair of collections of 1-handles $C,C'$ the maps:
$$\mu_{C,C'} : \Hom_{\alg(-\zigzag_n)}(C,C') \to \Hom_{\Ko_+(D^2,2n)}(\z_C,\z_{C'})$$
are quasi-isomorphisms. Since the trivial bypasses must bound caps and are
removed by relation (1) in Definition \ref{pkosdef}.  The only bypasses
$\z_C\to \z_C'$ between elementary generators are those that appear in
Equation \eqref{reebcorrespeqn}. These bypasses and their compositions are the cycles in $\PKo_+(D^2,2n)$. It suffices to show that they remain cycles in the quotient.


The remainder follows from the commutativity of pushouts:
$$L_S L_{S'} \eC \cong L_{S\amalg S'} \eC \cong L_{S'} L_S \eC$$
and the observation that the maps
$Q_{C,C'} : \Hom_{\eC}(C,C')\to\Hom_{L_S\eC}(C,C')$ are quasi-isomorphisms
for any single Postnikov localization. The latter can be seen by identifying
a single Postnikov localization as an instance of Drinfeld localization
under the Yoneda embedding, see Proposition \ref{sesprop}. The Drinfeld
localization modifies the homological structure of the morphisms by adding a
single map $h$ which is a boundary $dh = 1_\kr$ where $\kr$ is as in the
proof of Proposition \ref{sesprop}. This makes any cycle to or from $\kr$
into a boundary, but does not create any other boundaries. Since $\kr$ is
not an elementary generator $\z_C$ for some $C$, the result follows.
\end{proof}

\subsection{Dualities}
Our discussion concludes with some mention of dualities. In Examples
\ref{q3ex} and \ref{q4ex}, duality is found in the lateral symmetry of the
graph $\Q_n$. If $[n]$ denotes an ordered set $\{1 < 2 < \cdots < n\}$ then
the assignment: 
$$e_{S}^y = e_{[n]\backslash S}$$ 
determines a contravariant involution:
$$-^y : \Yi_n^{\op} \to \Yi_n.$$

In $\Yi_n$ there are no signed regions and the lateral symmetry is
contravariant; so the functor $-^y$ cannot directly correspond to a functor,
such as $-^\vee$, between formal contact categories. The proposition records
the correct formulation. The proof is left to the reader.

\begin{prop}\label{ytdualprop}
  The diagram below commutes,
$$\begin{tikzpicture}[every node/.style={midway},node distance=2cm]
  \matrix[column sep={4cm,between origins}, row sep={2cm}] at (0,0) {
    \node(R) {$\Yi_n^\op$}  ; & \node(S) {$\Ko_+(D^2,2n)^\op$}; \\
    \node(R/I) {$\Yi_n$}; & \node (T) {$\Ko_+(D^2,2n)$};\\
  };
  \draw[<-] (R/I) -- (R) node[anchor=east]  {$-^y$};
  \draw[->] (R) -- (S) node[anchor=south] {$\fii_+^\op$};
  \draw[->] (S) -- (T) node[anchor=west] {$\a$};
  \draw[->] (R/I) -- (T) node[anchor=south] {$\fii_+$};
\end{tikzpicture}$$
where the functor $\a = (-)^\vee \circ \bar{(-)} \circ (r^{-1})^\op$ is the composition of three equivalences: $r$ is the element of the mapping class group which rotates the basepoint $z$ by one region clockwise (Corollary \ref{rotationprop}), $\bar{(-)}$ is reverses the orientation of the disk (Proposition \ref{orrevprop}) and $(-)^\vee$ changes the signs of the regions (Proposition \ref{dualityfunprop}).
\end{prop}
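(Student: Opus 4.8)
The plan is to reduce the asserted commutativity to a comparison of generators. Since $\fii_+$ is an equivalence, the commutativity of the square is equivalent to the identity of functors $\fii_+\circ(-^y)=\a\circ\fii_+^\op\co\Yi_n^\op\to\Ko_+(D^2,2n)$; that is, the point to be proved is that the abstract duality $-^y$ becomes the geometric duality $\a$ after transport across $\fii_+$.

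First I would reduce to generators. Recall $\Yi_n=\R_n^\pt$, that $\R_n$ is generated by the vertices $e_S$ and the arrows $\th_p\co e_S\to e_T$ of the quiver $\Q_n$, and that $\fii_+$ carries $e_S$ to the positive dividing set $\ga(e_S)$ of Section \ref{nilTLsec} and $\th_p$ to the bypass move of Equation \eqref{tianbypasseqn}. Each of the three equivalences whose composite is $\a$ --- the mapping class group element $r^{-1}$ (Corollary \ref{rotationprop}), orientation reversal $\bar{(-)}$ (Proposition \ref{orrevprop}) and region--sign reversal $(-)^\vee$ (Proposition \ref{dualityfunprop}) --- was built so as to send dividing sets to dividing sets and bypass moves to bypass moves, so both $\fii_+\circ(-^y)$ and $\a\circ\fii_+^\op$ restrict to functors on the non-pretriangulated subcategory $\R_n^\op\subset\Yi_n^\op$. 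Since the target $\Ko_+(D^2,2n)$ is pretriangulated, the universal property of the pretriangulated hull (Proposition \ref{pretriprop}) then reduces the whole question to checking that these two functors agree on each object $e_S$ and on each generating arrow $\th_p$.

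Next I would carry out the picture check. Placing the $2n$ marked points of $D^2$ cyclically on the boundary circle, with the basepoint $z$ in a fixed position, and drawing $\ga(e_S)$ in this circular model, the composite $\a$ acts by rotating $z$ one region clockwise, then reflecting (orientation reversal on the disk) and finally interchanging the $+$ and $-$ regions. The heart of the matter is to verify that this composite carries $\ga(e_S)$ to a multicurve isotopic to $\ga(e_{[n]\backslash S})$; this is a direct comparison of the two pictures (it is the Temperley--Lieb statement that the complement diagram is the rotation--reflection dual of a diagram). Granting it, on objects $\a\circ\fii_+^\op(e_S)=\ga(e_{[n]\backslash S})=\fii_+(e_S^y)$. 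For a generating arrow, both routes send $\th_p\co e_S\to e_T$ to the generating arrow $\th_p\co e_{[n]\backslash T}\to e_{[n]\backslash S}$: the bypass of Equation \eqref{tianbypasseqn}, which adjoins the disjoint pair of regions $\{p,p+1\}$ to $\ga(e_S)$, is taken by the three symmetries of $\a$ to the analogous bypass between the complementary multicurves, and --- since $T=S\sqcup\{p,p+1\}$ forces $[n]\backslash S=([n]\backslash T)\sqcup\{p,p+1\}$ --- this is exactly the arrow produced by $-^y$. Thus the two functors agree on generators, hence everywhere.

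The step I expect to be the main obstacle is the picture check itself: one must follow the simultaneous effect of the three symmetries on the cyclic order of the $2n$ boundary points, on the position of the basepoint, and on the signs of the regions of $D^2\backslash\ga(e_S)$, and confirm that the reflection hidden inside orientation reversal does not shift the index $p$ of the bypass generator. Reconciling the top/bottom presentation of $\ga(e_S)$ used in Section \ref{nilTLsec} with the circular presentation in which $r$ appears as an honest rotation is the only genuinely delicate point; everything else --- functoriality, the generation statement, and the three background equivalences --- is formal and already available.
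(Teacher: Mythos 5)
The paper explicitly leaves this proof to the reader, so there is no house proof to compare against; the task is to assess whether your argument would fill the gap.

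Your reduction is the right one, and it is essentially the only one available: since $\Yi_n=\R_n^{\pt}$ and $\R_n$ is a quiver-with-relations category, the adjunction of Proposition \ref{pretriprop} reduces the commutativity of the square to agreement of $\fii_+\circ(-^y)$ and $\a\circ\fii_+^\op$ on the vertices $e_S$ and the arrows $\th_p$ of $\Q_n$. (The remark ``since $\fii_+$ is an equivalence'' is unnecessary for this reduction and slightly misleading, since $\fii_+$ is a Morita equivalence rather than a quasi-equivalence; you only need both composites to land in a pretriangulated target.) You then correctly locate the substance of the proof in a concrete multicurve computation, and you honestly flag it as the step you have not carried out.

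Two things are worth being more careful about in that picture check. First, the phrase ``the Temperley-Lieb statement that the complement diagram is the rotation--reflection dual of a diagram'' overstates what is being used: the objects $e_S$ are not arbitrary Temperley--Lieb diagrams but precisely the elementary dividing sets $\z_C$ of the arc parameterization $\zigzag_n$ (Definition \ref{elemdef}), and the assertion to verify is that rotating by one boundary point, reflecting through the axis through $z$, and then applying $(-)^\vee$ carries $\z_C$ to $\z_{C^c}$. This is a statement about the zig-zag parameterization, not a generic Temperley--Lieb identity, and you need to track both the direction of $r^{-1}$ and the choice of reflection axis: with one convention the composite lands on $\ga(e_{[n-1]\backslash S})$, with the opposite convention it lands on the non-ordered monomial $\ga(e_{(n-1)}\cdots e_1)$ rather than the ordered one. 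Second, the index set in the proposition appears to be $[n]=\{1,\dots,n\}$ although $S\subset\{1,\dots,n-1\}$; you faithfully reproduce $[n]\backslash S$, but the complement should be taken inside $[n-1]$, else $e_{[n]\backslash S}$ always contains the nonexistent generator $e_n$. With these two points tightened, the argument goes through; the remaining morphism check (that $\th_p:e_S\to e_T$ maps to $\th_p:e_{[n-1]\backslash T}\to e_{[n-1]\backslash S}$ on both sides) is, as you say, a routine application of $T=S\sqcup\{p,p+1\}\Leftrightarrow[n-1]\backslash S=([n-1]\backslash T)\sqcup\{p,p+1\}$ together with the description of $\th_p$ in Equation \eqref{tianbypasseqn}.

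One more point of rigor: the equality of functors should be read as an isomorphism in $\Hqe$ (or $\Hmo$), since $\a$ is assembled from lifts furnished by the universal properties of Section \ref{algsec} and is only well defined up to homotopy. Checking agreement on the generators of $\R_n$ is still sufficient, but you should say that the square commutes up to natural isomorphism rather than on the nose.
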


\newpage
\section{Linear Bordered Heegaard Floer categories}\label{BFalgsec}

Within the framework of the Bordered Heegaard Floer theory, a differential
graded category $\alg(\Ad)$ is associated to each arc diagram $\Ad$.  For
some choices this category satisfies $d=0$ and it is possible to write down
a quiver presentation.  In this section, these categories are related to the
corresponding formal contact categories. Functors are defined:
\begin{align*}
\alg(-\Ad_{0,n},1-n) \xto{\s_n} \Ho(\Ko_+^{2n-4}(\Si_{0,n},n\cdot 2)) \quad\normaltext{ and }\\
\alg(-\Ad_{g,1},2g-1) \xto{\tau_g} \Ho(\Ko_+^{2g-2}(\Si_{g,1},2))
\end{align*}
where $\Ad_{0,n}$ and $\Ad_{g,1}$ are arc diagrams which parameterize
surfaces, $\Si_{0,n}$ and $\Si_{g,1}$, of genus zero with $n$ boundary
components and of genus $g$ with one boundary component respectively. We fix
two points on every boundary component and require that $n>1$ and $g >0$. 

The bordered algebras studied in this section are the ``one moving strand'' algebras corresponding to the second largest weight, see \cite[\S 2]{Zarev1}, \cite[\S 2]{LOTMCG} or \cite[\S 3]{LOT}.

\subsection{A surface $\Si_{0,n}$ of genus $0$ with several boundary components}

When $n$ disks are removed from the 2-sphere
$$\Si_{0,n} = S^2\backslash \amalg_{i=1}^n D^2\conj{ } n > 1,$$
and two points are fixed on each of its boundary components, the resulting surface can be
pararameterized by the arc diagram $\Ad_{0,n}$ found in the definition
below. 

\begin{defn}\label{snparamdef}
  The arc diagram $\Ad_{0,n}$ consists of $n$ oriented line segments
  $\lns = \{ \Ad_1,\Ad_2,\ldots,\Ad_n\}$. On the first line segment $\Ad_1$
  there are $3n-3$ points and there is one point on each of the remaining line segments $\{\Ad_2\ldots \Ad_n\}$:
$$\Ad_1 = a_1 b_1 a'_1  a_2 b_2 a'_2 \cdots a_{n-1} b_{n-1} a'_{n-1} \conj{ and } \Ad_i = b'_{i-1} \normaltext{ for } 2\leq i \leq n.$$ 
The set of points is given by $\pts = \{ a_i,a'_i,b_i,b'_i : 1\leq i < n \}$.
The line $\Ad_1$ is oriented so that the subscripts of the points increase in value. The matching function is determined by the rules: $M(a_i) = M(a'_i)$ and $M(b_i) = M(b'_i)$.
\end{defn}

The annulus $\Si_{0,2}$ and its parameterization by $\Ad_{0,2}$ are pictured in Example
\ref{annulusparam}.

\begin{example}
When $n=4$, the definition above is illustrated
  by the picture below:
$$\BPic{fourpuncturedarcd}$$
\end{example}

\begin{defn}
  The category $\alg(-\Ad_{0,n},1-n)$ associated to the arc diagram
  $\Ad_{0,n}$ is the $k$-linear category determined by a quiver with
  vertices: $I_i$ and $J_i$ corresponding to the pairs $\{a_i,a'_i\}$ and
  $\{b_i,b'_i\}$ for $1\leq i < n$ respectively. There are arrows
  $\a_i : I_i \to J_i$, $\ga_i : J_i \to I_i$  and  $\nu_{i,i+1} : I_i \to I_{i+1}$ subject to the relations:
  \begin{enumerate}
  \item $\a_i \ga_i = 0 : J_i \to J_i$ and
  \item $\nu_{i+1,i+2}\nu_{i,i+1} = 0 : I_{i}\to I_{i+2}.$
  \end{enumerate}
\end{defn}

\begin{example}
The quiver underlying the category $\alg(-\Ad_{0,4},-2)$ in the definition above is illustrated below.
\begin{center}
\begin{tikzpicture}[scale=10, node distance=2.5cm]
\node (A1) {$J_1$};
\node (B1) [below=1.41421356cm of A1] {$I_1$};
\draw[->, bend left=35] (A1) to node {$\a_1$} (B1);
\draw[->, bend left=35] (B1) to node {$\ga_1$} (A1);

\node (A2) [right=3.5cm of A1] {$J_2$};
\node (B2) [below=1.41421356cm of A2] {$I_2$};
\draw[->, bend left=35] (A2) to node {$\a_2$} (B2);
\draw[->, bend left=35] (B2) to node {$\ga_2$} (A2);

\draw[->, bend right=35] (B1) to node [swap] {$\nu_{1,2}$} (B2);

\node (A3) [right=3.5cm of A2] {$J_{3}$};
\node (B3) [below=1.41421356cm of A3] {$I_{3}$};
\draw[->, bend left=35] (A3) to node {$\a_3$} (B3);
\draw[->, bend left=35] (B3) to node {$\ga_3$} (A3);
\draw[->, bend right=35] (B2) to node [swap] {$\nu_{2,3}$} (B3);

\end{tikzpicture}
\end{center}
\end{example}

The construction of the  functor 
$\s_n : \alg(\Ad_{0,n}, 1-n) \to \Ho(\Ko_+^{2n-4}(\Si_{0,n}, n\cdot 2))$
will occur in two stages.

First note that the parameterization of $\Si_{0,n}$ by the arc diagram
allows us to associate to each object, $I_i$ or $J_i$, $1\leq i< n$, a
dividing set contained in an annulus. In fact, Theorem \ref{zarevgenthm}
states that these dividing sets generate the contact category. In each
annulus we will describe bypass moves corresponding to the arrows
$\a_i : I_i \to J_i$ and $\ga_i : J_i \to I_i$. We will check that these
bypass moves satisfy the first collection of relations in the definition
above. After this has been done, bypass moves corresponding to the lateral
arrows $\nu_{i,i+1} : I_i \to I_{i+1}$ will be introduced and shown to
satisfy the second collection of relations.

\subsubsection{Step \#1}

For each annulus, the dividing sets $J_i$, $I_i$, and the bypass moves
corresponding to the maps 
$\ga_i  : J_i \to I_i$ and $\a_i : I_i \to J_i$ can be depicted by the curves:
$$\CPPic{Ipic2} \quad\quad\leftrightarrows\quad \CPPic{EFpic2}$$
The dividing set associated to $J_i$ is featured on the lefthand side and
the dividing set associated to $I_i$ is shown on the righthand side. The map
$\ga_i$ runs from left to right and the map $\a_i$ runs from right to
left. The equators of $\ga_i$ and $\a_i$ are determined by the dashed lines
in the dividing sets corresponding to $J_i$ and $I_i$ respectively.

\begin{prop}\label{step1prop1}
The relation $\a_i\ga_i = 0$ holds in the formal contact category $\Ho(\Ko_+(S^1 \times [0,1], (2,2)))$.
\end{prop}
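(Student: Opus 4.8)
The plan is to recognize the composite $\alpha_k\gamma_k\colon J_k\to J_k$ as a product of two \emph{consecutive} bypass moves belonging to a common bypass triangle, and then to appeal to the general fact --- recorded in the remark at the end of Section~\ref{overtwistedsec}, and resting on relation~(2) of Definition~\ref{pkosdef} together with Corollary~\ref{otpropcor} --- that any such product vanishes in the homotopy category.

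Carrying this out, I would first observe that the bypass move $\gamma_k\colon J_k\to I_k$, whose equator is the dashed arc drawn inside the dividing set $J_k$, determines by Proposition~\ref{triprop} a bypass triangle
$$J_k\xto{\gamma_k} I_k\xto{\theta'}\delta\xto{\theta''} J_k[1]$$
in $\Ho(\Ko_+(S^1\times[0,1],(2,2)))$, whose second edge $\theta'=(T_B,I_k,\delta)$ is the bypass on $I_k$ built from the equator of $\gamma_k$ in the canonical way. The key geometric step is then to check, directly from the two pictures, that $\theta'$ is isotopic to $\alpha_k$ --- equivalently, that the dashed equator drawn in the dividing set $I_k$ agrees up to isotopy rel endpoints with the one manufactured by Proposition~\ref{triprop}, so that in particular $\delta\simeq J_k$. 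With this in hand, $\alpha_k\gamma_k=\theta'\gamma_k$ is the composite of the first two edges of a bypass triangle. Since the equators of two consecutive edges are disjoint (Definition~\ref{disjdef}), relation~(2) of Definition~\ref{pkosdef} lets one reorder the composition so that the $\theta'$-bypass is performed first, directly on $J_k$; run in that order it is capped in the northeast or southwest, hence equals $0$ in the homotopy category by Corollary~\ref{otpropcor}, and therefore $\alpha_k\gamma_k=0$. Alternatively one may simply invoke the remark following Corollary~\ref{otpropcor}, which packages exactly this computation.

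The main obstacle --- essentially the only nonformal content --- is the geometric identification in the middle step: one must draw out the dividing sets $J_k$ and $I_k$ in the annulus together with their equators and verify that $\alpha_k$ really is the bypass $(T_B,I_k,\delta)$ produced from $\gamma_k$ by Proposition~\ref{triprop}, with $\delta\simeq J_k$, so that $\gamma_k$ and $\alpha_k$ are genuinely consecutive edges of one triangle, and that running $\theta'$ first on $J_k$ produces a cap. Everything after that is a direct application of the cap relations and the commutativity of disjoint bypass moves.
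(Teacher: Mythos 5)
Your argument and the paper's are the same in their essential content: both reduce the claim to the observation that $\a_k$ and $\ga_k$ are disjoint bypass moves, invoke relation~(2) of Definition~\ref{pkosdef} to reorder the composition, and then observe that performing $\a_k$ directly on $J_k$ produces a capped bypass, which vanishes by Corollary~\ref{otpropcor}. The paper does exactly this and nothing more: it checks disjointness and cappedness directly from an illustration and concludes. Where you diverge is the preliminary step of identifying $\a_k$ as the \emph{canonical} second edge $\theta'$ of the bypass triangle generated by $\ga_k$ (so that $\delta\simeq J_k$), in order to quote the remark after Corollary~\ref{otpropcor} as a black box. This is an over-complication, and it carries a small risk: the remark is about the specific triangle $J_k\to I_k\to\delta\to J_k[1]$ produced by Proposition~\ref{triprop}, and for it to apply you must verify that the equator of $\theta'$ built there agrees up to isotopy with the one defining $\a_k$ --- a strictly stronger geometric assertion than ``$\a_k$ and $\ga_k$ have disjoint equators and $\a_k$ on $J_k$ is NE/SW-capped,'' which is all the paper checks (and all you actually need). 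You flag this identification as the only nonformal content but do not carry it out; if you drop the triangle language and instead verify disjointness and cappedness directly from the two annulus pictures, the proof collapses to the paper's and the extra unchecked claim disappears.
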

\begin{proof}
  The map $\a_i\ga_i : J_i \to J_i$ is a composition of two disjoint bypass
  moves. This is illustrated below.
\begin{center}
\begin{overpic}
{IEFIpic2}
 \put(20, 15){$\a_i$}
 \put(55,55){$\ga_i$}
\end{overpic}
\end{center}
Relation $(2)$ in the Definition \ref{formaldef} of the formal contact category implies that applying the two bypass moves in either order must commute:
$$\begin{tikzpicture}[scale=10, node distance=2.5cm]
\node (A) {$J_i$};
\node (B) [right=1.5cm of A] {$I_i$};
\node (C) [below=1.5cm of A] {$0$};
\node (D) [right=1.5cm of C] {$J_i,$};
\draw[->] (A) to node {$\ga_i$} (B);
\draw[->] (A) to node [swap] {} (C);
\draw[->] (C) to node {} (D);
\draw[->] (B) to node {$\a_i$} (D);
\end{tikzpicture}$$
but performing the bypass move $\a_i$ before the bypass move $\ga_i$ must be
zero since $\a_i$ is capped.
\end{proof}

The same argument shows that one of the terms in the commutative
diagram associated to the other composition $\ga_i \a_i$ is a
capped bypass equivalent to identity.

\subsubsection{Step \#2}

As pictured above, the idempotents $I_i$ correspond to the boundaries of
regular neighborhoods of loops about each boundary component of $\Si_{0,n}$.
We think of $\Si_{0,n}$ as a subset of the plane
$D^2\backslash \amalg_{i=1}^{n-1} D^2 \subset \RR^2$ with $n-1$ disks
removed from its interior. The arc parameterization
orders the boundary components and the associated idempotents. When two of them
are adjacent, $I_i$ and $I_{i+1}$, there is a bypass move
$\nu_{i,i+1} : I_i \to I_{i+1}$ determined by the equator of the bypass
disk in the illustration below.
\begin{center}
\begin{overpic}
{gzstep21}
 \put(-12.5, 30){$I_i$}
 \put(105,30){$\nu_{i,i+1}$}
\end{overpic}
\end{center}

\begin{prop}
  The relation: $\nu_{i+1,i+2}\nu_{i, i+1} = 0$ holds in the formal contact
  category $\Ho(\Ko_+(\Si_{0,n},n\cdot 2))$.
\end{prop}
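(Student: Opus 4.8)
The plan is to imitate the proof of Proposition \ref{step1prop1}, together with the remark on consecutive bypass moves from Section \ref{overtwistedsec}. First I would produce explicit representatives of the two bypass moves whose equators are disjoint. Although $\nu_{k,k+1}$ and $\nu_{k+1,k+2}$ both interact with the $(k+1)$-st boundary component, the equator realizing $\nu_{k,k+1}$ on $I_k$ may be taken in a neighborhood of the $k$-th and $(k+1)$-st boundary components, while after performing $\nu_{k,k+1}$ there is a bypass disk on $I_{k+1}$ realizing $\nu_{k+1,k+2}$ whose equator can be isotoped rel endpoints into a neighborhood of the $(k+1)$-st and $(k+2)$-nd components; the residual crossings near the shared component are removed by an isotopy of the graph $\ga\cup\ell$ in the sense of Definition \ref{isodef}. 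Restricting to the subsurface of $\Si_{0,n}$ carrying the boundary components $k,k+1,k+2$ and invoking Lemma \ref{locrellem} keeps the bookkeeping local. The upshot is that $\nu_{k,k+1}$ and $\nu_{k+1,k+2}$ are disjoint in the sense of Definition \ref{disjdef}.

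Granting this, the relation that disjoint bypass moves commute (relation $(2)$ of Definition \ref{pkosdef}, carried to $\Ko$ by Definition \ref{formaldef}) lets the two moves be performed in either order, so I would obtain a commuting square
$$\begin{tikzpicture}[scale=10, node distance=2.5cm]
\node (A) {$I_k$};
\node (B) [right=1.5cm of A] {$I_{k+1}$};
\node (C) [below=1.5cm of A] {$0$};
\node (D) [right=1.5cm of C] {$I_{k+2}$};
\draw[->] (A) to node {$\nu_{k,k+1}$} (B);
\draw[->] (A) to node [swap] {} (C);
\draw[->] (C) to node {} (D);
\draw[->] (B) to node {$\nu_{k+1,k+2}$} (D);
\end{tikzpicture}$$
in $\Ho(\Ko_+(\Si_{0,n},n\cdot 2))$, whose left vertical and lower horizontal arrows are the parallel copies of $\nu_{k+1,k+2}$ and $\nu_{k,k+1}$ obtained by performing the moves in the reversed order. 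As in the remark following Corollary \ref{otpropcor}, when $\nu_{k+1,k+2}$ is performed first, i.e. before $\nu_{k,k+1}$, that bypass is capped in its northeastern (equivalently, by symmetry, southwestern) corner, so the dividing set it produces contains a homotopically trivial circle and is isomorphic to $0$ by Corollary \ref{tightcor}; by Corollary \ref{otpropcor} the move itself is the zero morphism. Hence the composite along the lower-left path of the square vanishes, and commutativity forces $\nu_{k+1,k+2}\circ\nu_{k,k+1}=0$.

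I expect the main obstacle to be the geometric content of the first paragraph: exhibiting equators for the two bypass moves that are genuinely disjoint, and verifying that, in the reversed order, $\nu_{k+1,k+2}$ becomes capped rather than trivial. This is entirely parallel to the annulus computation of Proposition \ref{step1prop1} and to the consecutive-bypass remark, but it must be carried out with some care because the two moves share the $(k+1)$-st boundary component, and one has to check that reglueing the dividing set across that shared component does not obstruct the required isotopies.
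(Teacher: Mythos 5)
Your proposal is correct and follows essentially the same route as the paper: observe that the two bypass moves have disjoint equators, invoke relation $(2)$ to form the commuting square, and note that the parallel copy of $\nu_{k+1,k+2}$ performed first (before $\nu_{k,k+1}$) is capped so that it is the zero morphism by Corollary \ref{otpropcor}. The paper's proof is terser, relying on the illustration to exhibit the cap, but the logical structure is identical.
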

\begin{proof}
  The proof is analogous to the proof of Proposition \ref{step1prop1}
  above. The bypass moves representing $\nu_{i,i+1}$ and
  $\nu_{i+1,i+2}$ are disjoint. Considering them simultaneously
  produces visual aid below.
$$\hspace{-2in}\CPPic{gzstep22}$$
The curve on the far right represents the equator of the bypass
$\nu_{i+1,i+2}$. Since this bypass move is capped the composition
factors through zero.
\end{proof}

\subsubsection{Y. Tian's annulus}
  As in Section \ref{tiansec} above, in Y. Tian's work \cite[\S 2.2]{YT2},
  the category associated to an annulus with two points on each boundary
  component is the pretriangulated hull on the free $k$-linear category
  associated to a quiver with five vertices: $I$, $E$, $F$ and $EF$. The
  dividing sets associated to $E$ and $F$ are Euler dual and are neither the
  source nor the target of any non-trivial edges.  There are two dividing
  sets $I$ and $EF$ generating the subcategory with Euler number zero
  via maps $\ga : I \to EF$ and $\a : EF \to I$ which are required to satisfy the relation:
$$\a\ga=0.$$
This description is  summarized by the illustration below.
$$\begin{tikzpicture}[scale=10, node distance=2.5cm]
\node (A) {$F$};
\node (B) [right=3cm of A] {$\ga : I$};
\node (C) [right=1.5cm of B] {$EF : \a$};
\node (D) [right=3cm of C] {$E$};
\draw[transform canvas={yshift=0.3ex},->] (B) -- (C);
\draw[transform canvas={yshift=-0.3ex},->] (C) -- (B);
\end{tikzpicture}$$
The quiver in the center is precisely $\alg(-\Ad_{0,2},0)$ above.

\begin{remark}
  It is natural to ask about surfaces $\Si_{0,n}$ with $n>2$. There are
  presently two constructions in the literature. In \cite{YT2}, the category
  associated to $\Si_{0,n}$ is a Bordered Heegaard Floer category by
  definition. Precisely the same can be said for the categories considered
  by I. Petkova and V. V\'{e}rtesi \cite{PV}. While the former chooses an
  arc parameterization which yields a heart encoding contact geometry, the
  latter chooses an arc parameterization which yields a Stendhalic  extension \cite{Webster}
  of the strands algebra \cite{LOT}. In both cases the arc parameterizations are {\em
    degenerate} so that the Border Heegaard Floer construction does not
  suffice to imply an equivalence between the two and the materials here 
  do not necessarily apply. 
\end{remark}

\subsection{A surface $\Si_{g,1}$ of genus $g$ with one boundary component}

\begin{defn}\label{symparcddef}
The arc diagram $\Ad_{g,1}$ consists of $4g$ points
$\pts = \{ a_i, a'_i, b_i,b'_i : 1\leq i \leq g \}$ on one line segment $\lns = \{ \Ad_1\}$
$$\Ad_1 = a_1 b_1 a'_1 b'_1 a_2 b_2 a'_2 b'_2 \cdots a_g b_g a'_g b'_g$$
which is oriented so that the indices above are increasing. The matching function is
determined by the rules $M(a_i) = M(a'_i)$ and $M(b_i) = M(b'_i)$ for $1\leq i \leq g$.
\end{defn}

\begin{example}
  The arc diagram $\Ad_{2,1}$ is illustrated below.
$$\BPic{genus22}$$
\end{example}

\begin{defn}
The category $\alg(-\Ad_{g,1},2g-1)$ associated to the arc diagram $\Ad_{g,1}$ is the $k$-linear category determined by a quiver with vertices: $I_i$ and $J_i$ corresponding to the pairs $\{a_i,a'_i\}$ and $\{b_i,b'_i\}$ for $1\leq i \leq g$ respectively. There are arrows:
$$\a_i,\b_i : I_i \to J_i, \ga_i : J_i \to I_i  \conj{ and } \eta_{i,i+1} : I_i \to J_{i+1},$$
the compositions of which satisfy the relations below:
\begin{enumerate}
\item $\a_i \ga_i = 0 : J_i \to J_i$ and $\ga_i \b_i = 0 : I_i \to I_i$
\item $\eta_{i,i+1} \a_i = 0 : I_i \to I_{i+1}$ and  $\b_{i+1} \eta_{i,i+1} = 0 : J_i \to J_{i+1}$
\end{enumerate}
\end{defn}

Note that $\eta_{i,i+1} : I_i \to J_{i+1}$ is not the same as $\nu_{i,i+1} : I_i \to I_{i+1}$ in the previous section.

\begin{example}
 The quiver underlying the construction of the category $\alg(-\Ad_{2,1},3)$ is illustrated below.
\begin{center}
\begin{tikzpicture}[scale=10, node distance=2.5cm]
\node (A1) {$I_1$};
\node (B1) [right=1.41421356cm of A1] {$J_1$};
\draw[->, bend left=35] (B1) to node {$\ga_1$} (A1);
\draw[->, bend left=35] (A1) to node {$\a_1,\b_1$} (B1);

\node (A2) [right=2.26cm of B1] {$I_2$};
\node (B2) [right=1.41421356cm of A2] {$J_2$};
\draw[->, bend left=35] (B2) to node {$\ga_2$} (A2);
\draw[->, bend left=35] (A2) to node {$\a_2,\b_2$} (B2);

\draw[->, bend right=0] (B1) to node  {$\eta_{1,2}$} (A2);

\end{tikzpicture}
\end{center}
\end{example}

The construction of the  functor 
$\tau_g : \alg(-\Ad_{g,1}, 2g-1) \to \Ko^{2g-2}(\Si_{g,1},2)$
will occur in two stages.

First note that the parameterization of $\Si_{g,1}$ by the arc diagram
allows us to associate to each $i$, $1\leq i\leq g$, a pair of
dividing sets $I_i$ and $J_i$ contained in a torus
$\Si_{1,1} \subset \Si_{g,1}$ with one boundary component. In fact, Theorem
\ref{zarevgenthm} states that these dividing sets generate the category. In
each torus, we will describe bypass moves corresponding to the arrows
$\a_i,\b_i : I_i \to J_i$ and $\ga_i : J_i \to I_i$ and check that these
bypass moves satisfy the first collection of relations in the definition
above.

After this has been done, bypass moves corresponding to the lateral arrows
$\eta_{i,i+1}$ will be introduced and shown to satisfy the second collection
of relations.

\subsubsection{Step \#1}
For each torus, the dividing sets $I_i$, $J_i$, and the bypass moves
corresponding to the maps $\a_i,\b_i : I_i \to J_i$ and
$\ga_i : J_i \to I_i$ can be depicted by the curves:
$$\hspace{-.5in}\CPPic{step1map1} \hspace{1.25in}\leftrightarrows\hspace{.25in} \CPPic{step1map2}$$
On either side of the arrows in the picture above, the two small circles are
identified by folding the page to form the surface $(T^2\backslash D^2,2)$.  The
dividing set associated to $I_i$ is featured on the lefthand side and the
dividing set associated to $J_i$ is featured on the righthand side. The maps
$\a_i$ and $\b_i$ run from left to right and the map $\ga_i$ runs from right
to left. The equator of the map $\a_i$ is dotted and the equator of $\b_i$
is dashed.

\begin{prop}\label{Nextstep1prop1}
The relations $\b_i\ga_i = 0$ and $\ga_i\a_i = 0$ hold in the formal contact category $\Ho(\Ko_+(\Si_{1,1}, 2))$.
\end{prop}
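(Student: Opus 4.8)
The plan is to argue exactly as in Proposition \ref{step1prop1}. The two ingredients are relation $(2)$ of Definition \ref{pkosdef}, which says that disjoint bypass moves commute, and Corollary \ref{otpropcor}, which says that a bypass move capped in its northeast or southwest corner is zero in $\Ho(\Ko_+(\Si_{1,1},2))$ --- equivalently, the dividing set produced by such a move contains a homotopically trivial closed curve and so is contractible by Proposition \ref{otprop} together with Corollary \ref{tightcor}.

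First I would read off, from the folded picture of $(T^2\setminus D^2,2)$ displayed just before the statement, the equators of the three bypasses $\a_k$ (dotted), $\b_k$ (dashed) and $\ga_k$. For the composite $\ga_k\a_k\colon I_k\to I_k$ I would draw on the single dividing set $I_k$ both the equator of $\a_k$ and a parallel copy of the equator of $\ga_k$, and check that these two curves have geometric intersection number zero; by Definition \ref{disjdef} the corresponding bypass moves are then disjoint, so by relation $(2)$ they can be performed in either order. Performing $\a_k$ and then $\ga_k$ gives the composite $\ga_k\a_k$, while performing the equator of $\ga_k$ directly on $I_k$ first gives a bypass which one checks is capped, so its target is the zero object; the resulting commuting square exhibits $\ga_k\a_k$ as a map factoring through $0$, hence it vanishes in $\Ho(\Ko_+(\Si_{1,1},2))$. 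The identity $\b_k\ga_k=0$ is obtained by the same argument carried out on the dividing set $J_k$, drawing the equator of $\ga_k$ together with the equator of $\b_k$; here it is the equator of $\b_k$ performed on $J_k$ first that produces the capped bypass. A reflection of $(T^2\setminus D^2,2)$ exchanging $\a_k$ and $\b_k$ would also allow one to deduce the second relation from the first.

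The main obstacle is the combinatorial bookkeeping in the preceding paragraph: one must draw the relevant equators on the folded torus and verify both that each pair has geometric intersection number zero and that the swapped composite is genuinely capped, which amounts to tracking the three arcs $\ga_a,\ga_b,\ga_c$ of the dividing set through the folding identification of the two small circles as prescribed in Definition \ref{bypassattdef}. Once those pictures are in place the remainder of the argument is formal and identical to the proof of Proposition \ref{step1prop1}.
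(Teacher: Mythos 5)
Your proof follows exactly the paper's argument: both reverse the order of the two bypass moves via relation $(2)$ of Definition \ref{pkosdef}, observe that the first bypass in the reversed order (the one now applied away from its natural domain) is capped in the sense of Corollary \ref{otpropcor}, and conclude that the composite factors through a contractible object, precisely as in Proposition \ref{step1prop1}. Your identification of the capped bypass ($\ga_k$ on $I_k$, respectively $\b_k$ on $J_k$) matches the paper's structurally, the only mismatch in labels stemming from the paper's own $\a_k/\b_k$ inconsistency between the statement of the proposition and the defining relations of $\alg(-\Ad_{g,1},1-g)$.
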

\begin{proof}
The logic is analogous to the proof of Proposition \ref{step1prop1} above. The map $\b_i\ga_i$ is a composition of two disjoint bypass moves. When performed in the opposite order the bypass $\ga_i$ is capped implying that the composition $\b_i\ga_i$ factors through zero. This is illustrated below.
\begin{center}
\begin{overpic}
{step1comp1}
 \put(35, 10){$\ga_i$}
 \put(80,60){$\b_i$}
\end{overpic}
\end{center}

The map $\ga_i\a_i$ is a composition of two disjoint bypass moves. When performed in the opposite order the bypass $\a_i$ is capped implying that the composition $\b_i\ga_i$ factors through zero. This illustrated below.
\begin{center}
\begin{overpic}
{step1comp2}
 \put(20, 15){$\a_i$}
 \put(38.5,50){$\ga_i$}
\end{overpic}
\end{center}
\end{proof}

\subsubsection{Step \#2}

As pictured above, the idempotents $I_i$ correspond to the boundaries of
regular neighborhoods of loops about the first 1-handle and the idempotents
$J_i$ to the boundaries of regular neighborhoods of loops about the second
1-handle in the $i$th torus $\Si_{1,1} \subset \Si_{g,1}$.  The tori
$\Si_{1,1}$ are ordered by the arc parameterization and, when two tori are
adjacent, there is a bypass move $\eta_{i,i+1} : J_i \to I_{i+1}$ from the
dividing set about the second 1-handle of the first torus to the dividing
set about the first 1-handle of the second torus. The map $\eta_{i,i+1}$ is pictured below.
\begin{center}
\begin{overpic}
{step2map}
\put(175,20){$\eta_{i,i+1}$}
\end{overpic}
\end{center}

In the illustration above, the first two and the second two smaller circles are connected by annuli $S^1\times [0,1]$ to form the $k$th and $k+1$st tori $\Si_{1,1} \subset \Si_{g,1}$.

\begin{prop}
  The relations: $\eta_{i,i+1} \a_i = 0 : I_i \to I_{i+1}$ and  $\b_{i+1} \eta_{i,i+1} = 0 : J_i \to J_{i+1}$ hold in the formal contact category $\Ho(\Ko_+(\Si_{g,1},2))$
\end{prop}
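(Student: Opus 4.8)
The plan is to run the same argument that proves Propositions~\ref{step1prop1} and~\ref{Nextstep1prop1}. In both relations the composite is a composition of two bypass moves, and the first thing I would check is that these two moves are disjoint in the sense of Definition~\ref{disjdef}: the equator of the $\eta_{k,k+1}$-move is supported near the $1$-handle joining the $k$th and $(k+1)$st punctured tori $\Si_{1,1}\subset\Si_{g,1}$ and crosses that handle, while the equators of $\a_k$, $\b_k$ and $\ga_k$ are supported inside the $k$th (or $(k+1)$st) torus; after an isotopy rel endpoints in the dividing set these equators have geometric intersection number zero. Granting disjointness, relation~$(2)$ of Definition~\ref{pkosdef}, carried into $\Ho(\Ko_+(\Si_{g,1},2))$ via Definition~\ref{formaldef}, says that the two moves commute, so each composite sits inside a commuting square with its reverse.

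Next I would reorder the composition and read off the dividing set produced by whichever move is applied first. The point, exactly as in the earlier propositions, is that in the reversed order the remaining move becomes \emph{capped}: the relevant subarc $[a,b]$ or $[b,c]$ of its equator bounds an embedded half-disk inside the newly created positive (or negative) region. By Corollary~\ref{otpropcor} a bypass move capped in this way is zero in the homotopy category, so the reversed composite factors through $0$, and hence so does the original one. Concretely this would be presented with a figure, in the style of the pictures accompanying Propositions~\ref{step1prop1} and~\ref{Nextstep1prop1}, showing $I_k$, $J_k$, $I_{k+1}$, $J_{k+1}$ on the union of the two tori together with the equators of $\a_k$ (resp.\ $\b_k$) and of $\eta_{k,k+1}$.

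The main obstacle I expect is this cappedness verification. Unlike in Proposition~\ref{step1prop1}, the $\eta_{k,k+1}$-move is not local to a single torus, so one has to track carefully how the dividing set looks across the connecting $1$-handle after $\a_k$ (resp.\ $\b_k$) has been performed, and confirm that what remains of the $\eta$-equator genuinely is the equator of a half-disk mapped into $(\Si_{g,1},\ga)$. I would also check the bookkeeping: $\a_k$, $\b_k$, $\ga_k$ and $\eta_{k,k+1}$ are orientation-preserving, leave the sign of the region at the basepoint $z_1$ unchanged, and preserve the Euler number $2g-2$, so all objects and maps stay inside $\Ko_+^{2g-2}(\Si_{g,1},2)$. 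Finally, since $\Ko_+(\Si_{g,1},2)$ is built from $\PKo_+(\Si_{g,1},2)$ by a Postnikov localization followed by the pretriangulated hull --- operations that only introduce distinguished triangles --- a composite that is already null in $\Ho(\PKo_+(\Si_{g,1},2))$ by the capped-bypass relation stays null in $\Ho(\Ko_+(\Si_{g,1},2))$, which finishes the proof.
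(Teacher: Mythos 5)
Your proposed argument is the same as the paper's: for each composite, observe that the two bypass moves are disjoint (so they commute in $\PKo_+$ by relation $(2)$, hence in $\Ho(\Ko_+)$), then in the reversed order the bypass performed first ($\eta_{k,k+1}$ for the first relation, $\b_{k+1}$ for the second) is capped in a corner that makes it zero by Corollary~\ref{otpropcor}, so the composite factors through $0$ in $\Ho(\Ko_+(\Si_{g,1},2))$.

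One correction, however: your closing sentence asserts that the composite "is already null in $\Ho(\PKo_+(\Si_{g,1},2))$ by the capped-bypass relation," and that this nullity then persists under the Postnikov localization. That is not right. The pre-formal category only carries relation $(1)$ of Definition~\ref{pkosdef}, namely that NW/SE-capped bypasses equal the identity. The statement that NE/SW-capped bypasses vanish is Corollary~\ref{otpropcor}, and its proof needs Proposition~\ref{otprop}, which in turn relies on the bypass triangle being a distinguished triangle --- a structure that exists only after the Postnikov localization. So there is no "capped-bypass relation" making the composite null at the level of $\Ho(\PKo_+)$. Fortunately this sentence is superfluous: the whole argument you set up (commutativity plus Corollary~\ref{otpropcor}) already takes place inside $\Ho(\Ko_+(\Si_{g,1},2))$, so you should simply delete the last paragraph rather than try to first establish nullity in the pre-formal category.
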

\begin{proof}
The logic is analogous to the proof of Proposition \ref{Nextstep1prop1} above. The map $\eta_{i,i+1} \a_i$ is a composition of two disjoint bypass moves. When performed in the opposite order the bypass $\eta_{i,i+1}$ is capped implying that the composition factors through zero. This is illustrated below.
\begin{center}
\begin{overpic}
{step2comp1}
 \put(105, 97){$\a_i$}
 \put(175,30){$\eta_{i,i+1}$}
\end{overpic}
\end{center}

The map $\b_{i+1} \eta_{i,i+1}$ is a composition of two disjoint bypass moves. When performed in the opposite order the bypass $\b_{i+1}$ is capped implying that the composition factors through zero. This is illustrated below.
\begin{center}
\begin{overpic}
{step2comp2}
 \put(215, 65){$\b_{i+1}$}
 \put(115,65){$\eta_{i,i+1}$}
\end{overpic}
\end{center}
\end{proof}

\newpage
\section{Comparison to geometric categories}\label{geosec}

One of the appealing qualities of the formal contact category $\Ko(\Si)$ is
that it has a universal property with respect to other dg categories by
construction. Although there is no underlying Floer theory or contact
geometry, this property allows us compare $\Ko(\Si)$ to other constructions
which stem from observations involving the former or the latter.  In this
section we will discuss why the univeral property of $\Ko(\Si)$ implies the
existence of maps:
$$\begin{tikzpicture}[scale=10, node distance=2.5cm]
\node (A) {$\Ko(\Si)$};
\node (B) [left=3cm of A] {};
\node (C) [right=3cm of A] {};
\node (B2) [below=1.25cm of B] {$\Co(\Si)$};
\node (C2) [below=1.25cm of C] {$\alg(-\Ad)\module$};
\draw[->,dashed] (A) to node {} (B2);
\draw[->] (A) to node [swap] {} (C2);
\end{tikzpicture}$$
in the homotopy category of dg categories which relate contact categories
$\Co(\Si)$ with the corresponding component of the Bordered Heegaard-Floer
theory.  See Sections \ref{contactsec} and \ref{bsfsec} for precise
statements.

\subsection{Relation to the Contact Category}\label{contactsec}

Much of the material in this paper was inspired by K. Honda's proposed {\em
  contact category} $\Co(\Si)$ \cite{KoHo}. Although a full account of this
construction is in preparation, in this section a modest comparison is drawn
between the formal and geometric contact categories.

The morphisms in the contact category $\Co(\Si)$ are tight contact
structures on $\Si\times [0,1]$.  More precisely, $\Co(\Si)$ is the
additivization \cite[\S 1.1.2.1]{L} of a category with objects given by
dividing sets $\ga$ on the surface $\Si$ and morphisms
$\theta : \ga \to \ga'$ given by contactomorphism classes of contact
structures on $\Si\times\I$, which induce $\ga$ and $\ga'$ on
$\partial\Si\times\I$, subject to the relation that an overtwisted contact
structure is zero. The composition is induced by the pullback of contact
plane fields along the rescaling diffeomorphism:
$\Si\times [0,1] \xto{\sim} \Si\times [0,1]\cup_{\Si} \Si\times [0,1]$.

The contact category $\Co(\Si)$ plainly exists. The maps in the contact
category $\Co(\Si)$ are generated by bypass moves between dividing sets
\cite[Lem. 3.10 (Isotopy discretization)]{KoGluing}. Since the bypass moves
satisfy the elementary relations $(1)$ and $(2)$ in Definition
\ref{pkosdef}, there is a functor: $\s : \PKoS \to \Co(\Si)$.  When
$(\Si,m)$ is a surface with boundary then the discussion in Section
\ref{overtwistedsec} suggests that these categories are very closely
related.

For the purposes of comparison we must make the non-trivial assumption
below.
\begin{assumption}
  The contact category $\Co(\Si)$ has pretriangulated dg enhancement $\Co^{\normaltext{dg}}(\Si)$ in which bypass triangles are distinguished triangles.
\end{assumption}

If this assumption is correct then there is a canonical lift 
$$\tilde{\s} : \KoS \to \Co^{\normaltext{dg}}(\Si)$$
of the dg functor $\s$ to a functor from the formal contact category to the dg category $\Co^{\normaltext{dg}}(\Si)$.

\begin{remark}\label{trirem}
In the formal contact category $\Ko(\Si)$, the bypass $\an$ involving the
annulus in the proof of Theorem \ref{surfacetheorem} determines a
distinguished triangle:
$$\ga \xto{\an} \ga' \xto{\an'} \ga'' \xto{\an''} \ga[1].$$
The map $\an'$ is not necessarily zero. However, in the geometric setting
$\an' = 0$, making the convolution $\ga \simeq C(\an')$ isomorphic to a
direct sum \cite{YTD}. As $\s(\an')=0$, it is possible to view $\Ho(\KoS)$
as a deformation.

\end{remark}

\subsection{Relation to the Bordered Sutured Floer Categories}\label{bsfsec}

\newcommand{\bsd}{\widehat{\normaltext{BSD}}}
\newcommand{\bsda}{\widehat{\normaltext{BSDA}}}
\newcommand{\DA}{\ga_A}
\newcommand{\DB}{\ga_B}
\newcommand{\DC}{\ga_C}
\newcommand{\tDA}{\tilde{\ga}_A}
\newcommand{\tDB}{\tilde{\ga}_B}

\newcommand{\DDA}{\aD_A}
\newcommand{\DDB}{\aD_B}
\newcommand{\DDC}{\aD_C}

\newcommand{\tDDA}{\widetilde{\aD}_A}
\newcommand{\tDDB}{\widetilde{\aD}_B}

\newcommand{\cp}{W}
\newcommand{\hd}{H}
\newcommand{\W}{\aW_3}

In this section we construct a functor $\widetilde{\Ko}_+(\Si,m)\to \alg(-\Ad)\module$
from a cofibrant replacement of the positive part of the formal contact
category to the category of left dg modules over an arc algebra of an arc
diagram $\Ad$ that parameterizes $\Si$. Assume that $(\Si,m)$ has at least
one boundary component and every boundary component $\partial_i\Si$ contains
a positive even number of points $m_i$. The ground ring $k$ of
$\Ko_+(\Si,m)$ is fixed to be the field $\FF_2$. We will not discuss
gradings here. The cofibrant replacement is a slightly larger, but
quasi-equivalent category, see Conj. \ref{cofibrem}. In particular, there
is a functor $\Ko_+(\Si,m) \to \alg(-\Ad)\module$ in $\Hqe$.

If $\ga$ is a dividing set on $\Si$ then R. Zarev associates a bordered
sutured manifold \cite[\S 3.2]{Zarev1} called the {\em cap} $\cp_{\ga}$ to
$\ga$. The cap $\cp_\ga$ is the 3-manifold $\Si\times [0,1]$ in which the
surface $\Si\times\{0\}$ is parameterized by the arc diagram $\Ad$, the
{\em sutures} $m$ are the $m$ boundary points, the dividing set
$\ga$ appears on $\Si\times \{1\}$ and the two sides are connected by
straight lines segments in $\partial\Si\times [0,1]$.
$$\cp_\ga = (\Si\times [0,1], \ga\times \{1\} \cup \Lambda \times [0,1], (-\Si\times \{0\},-\Lambda\times\{0\})).$$
For details concerning this definition consult \cite[Def. 2.5]{Zarev2}.

Associated to each bordered sutured manifold $Y$, there is a Heegaard
diagram $\hd(Y)$ \cite[\S 4]{Zarev1}. Associated to each Heegaard diagram
$\hd(Y)$ there is a left dg $\alg(-\Ad)$-module $\bsd(Y)$ \cite[\S
7.3]{Zarev1}. Notation for the module does not include the intermediate
Heegaard diagram because the homotopy type of the module is independent of
this choice.

If $\ga$ is a dividing set on $\Si$ such that the basepoint $z_1$ is
contained in the positive region $R_+\subset \Si\backslash \ga$ then $\ga$
determines an object $\ga\in \Ob(\Ko_+(\Si,m))$. To each such $\ga$ we associate
the left dg module $\bsd(\ga) = \bsd(\cp_\ga)$ associated to the cap for some choice of
Heegaard diagram.
\begin{equation}\label{obdef}
\ga \mapsto \bsd(\ga) \conj{ where } \bsd(\ga) = \bsd(\cp_{\ga})
\end{equation}

The disk $(D^2,6)$ can be parameterized by an arc diagram $\W$ pictured below:
$$\CPPic{W3arc}$$
The diagram $\W$ consists of three oriented line segments $\lns =\{ \Ad_1,\Ad_2,\Ad_3\}$ containing the points $\{a\}$, $\{a' < b \}$ and $\{b'\}$ respectively. The matching function $M$ is determined by $M(a) = M(a')$ and $M(b) = M(b')$.

As discussed in Proposition \ref{triprop}, the three important dividing sets
$\DA$, $\DB$ and $\DC$ in $(D^2,6)$ can be connected three bypass moves
$$\DA \xto{\th_A} \DB \xto{\th_B} \DC \xto{\th_C} \DC[1]\conj{ or } \MPic{oneb} \xto{\th_A} \MPic{twob} \xto{\th_B} \MPic{threeb} \xto{\th_C} \MPic{oneb}[1].$$
(The signs of the regions are fixed by requiring that the region containing
the basepoint is positive.)  Associated these three dividing sets, there are
three left $\alg(-\W)$-modules $\bsd(\DA)$, $\bsd(\DB)$ and $\bsd(\DC)$
corresponding to the bordered sutured diagrams given by the caps
$\cp_{\DA}$, $\cp_{\DB}$ and $\cp_{\DA}$.

In \cite[\S 6.2]{EVVZ}, the authors J. B. Etnyre, D. S. Vela-Vick and R. Zarev made a fundamental computation: after choosing Heegaard diagrams for the caps $\cp_{\DA}$, $\cp_{\DB}$ and $\cp_{\DC}$, they find that there are chain maps: $\phi_A : \bsd(\DA)\to \bsd(\DB)$, $\phi_B : \bsd(\DB) \to \bsd(\DC)$ and $\phi_C : \bsd(\DC) \to \bsd(\DA)$ such that 
$$\bsd(\DA) \xto{\phi_A} \bsd(\DB) \xto{\phi_B} \bsd(\DC) \xto{\phi_C} \bsd(\DA)[1]$$
is a distinguished triangle. They show explicitly that
\begin{enumerate}
\item $\bsd(\DA) = C(\phi_B)$
\item $\phi_A$ is projection and
\item $\phi_C$ is inclusion.
\end{enumerate}
(Alternatively, this follows from Section \ref{tiansec} and the Morita
invariance of the category associated to the disk by Bordered Sutured Floer
theory.) Our functor is defined using the pairing theorem to extend the
assignments: $\th_A \mapsto \phi_A$, $\th_B \mapsto \phi_B$ and
$\th_C \mapsto \phi_C$ to all of the other bypass moves between dividing
sets.

Throughout the remainder of this section, we will make repeated use the
pairing theorem.  Suppose that $\ga$ is a dividing set on $\Si$ and the first basepoint $z_1$ is contained in a positive region.
Then if
$D = (D^2,2m) \subset \Si$ is an embedded disk with $2m$ points on the
boundary such that $\ga^{\circ} = \ga\backslash (D\cap \ga)$ is a dividing set
on $\Si\backslash D$ then the pairing theorem \cite[Thm. 8.7]{Zarev1} gives
a homotopy equivalence:
$$\bsd(\ga)\xto{\sim} \bsda(\ga^{\circ})\boxtimes \bsd(\ga\cap D)\conj{ } \ga = \ga^\circ \cup_{\ga\cap \partial D} (\ga \cap D)$$
where $\ga\cap \partial D = 2m$, $\bsd(\ga) = \bsd(\cp_{\ga})$ is the left dg $\alg(-\Ad)$-module
assigned to the dividing set $\ga$,
$\bsda(\ga^{\circ}) = \bsda(\cp_{\ga^{\circ}})$ is a left
$\alg(-\Ad)$-module and right $A_{\infty}$ $\alg(-\W)$-module,
$\bsd(\ga\cap D) = \bsd(\cp_{\ga\cap D})$ is the left $\alg(-\W)$-module
determined by $\ga$ in the interior of the disk $D$ and the box product
$\boxtimes$ is an analogue of the derived tensor product, see \cite[\S
2.4]{LOT}.

\begin{defn}\label{mapsdef}
If $\th : \ga \to \eta$ is a bypass move then the map $\th_* : \bsd(\ga) \to \bsd(\eta)$ of dg modules associated to $\th$ is determined by the commutative diagram:
$$\begin{tikzpicture}[scale=10, node distance=2.5cm]
\node (A) {$\bsd(\ga)$};
\node (B) [right=1.5cm of A] {$\bsda(\ga^{\circ})\boxtimes \bsd(\DA)$};
\node (C) [below=1.5cm of A] {$\bsd(\eta)$};
\node (D) [right=1.5cm of C] {$\bsda(\ga^{\circ})\boxtimes \bsd(\DB)$};
\draw[->] (A) to node {$\sim$} (B);
\draw[->] (A) to node {$\th_*$} (C);
\draw[->] (B) to node {$1\boxtimes \phi_A$} (D);
\draw[->] (C) to node {$\sim$} (D);
\end{tikzpicture}$$
where $\ga^\circ = \ga\backslash D$, introduced above, denotes the dividing set minus the region
containing the equator of the bypass disk associated to $\th$. 
\end{defn}

In order for the maps chosen above to yield a functor from the pre-formal
contact category, we must check that relations (1) and (2) in Definition
\ref{pkosdef} above are satisfied. Since these relations hold up to homotopy
in the category $\alg(-\Ad)\module$, this determines a functor from the
cofibrant replacement of the pre-formal contact category. Lastly we will
show that this functor factors through the Postnikov localization introduced
by Proposition \ref{triprop}.

\subsubsection{Relation (1)}

If $\th$ is capped in the northwest or southeast then relation (1) must hold
up to homotopy by the invariance of the bordered sutured theory \cite[\S 7]{Zarev1}.

In more detail, suppose that $\th : \ga\to \eta$ is a bypass move and $D$ is a neighborhood of the equator of the underlying bypass disk. Then when there is a cap, the region $D$ can be enlarged to a region $\tilde{D}$ which contains the cap disk in $\Si$. Two applications of the pairing theorem give:
$$\begin{tikzpicture}[scale=10, node distance=2.5cm]
\node (A) {$\bsd(\ga)$};
\node (C) [below=1.5cm of A] {$\bsd(\eta)$};
\draw[->] (A) to node {$\th_*$} (C);

\node (B) [right=1.5cm of A] {$\bsda(\ga^{\circ})\boxtimes \bsd(\DA)$};
\node (D) [below=1.5cm of B] {$\bsda(\ga^{\circ})\boxtimes \bsd(\DB)$};
\draw[->] (B) to node {$1\boxtimes \phi_A$} (D);

\draw[->] (A) to node {$\sim$} (B);
\draw[->] (C) to node {$\sim$} (D);

\node (E) [right=1.5cm of B] {$\bsda(\tilde{\ga}^{\circ})\boxtimes \bsd(\tDA)$};
\node (F) [below=1.5cm of E] {$\bsda(\tilde{\ga}^{\circ})\boxtimes \bsd(\tDB)$};
\draw[->] (E) to node {$1\boxtimes \tilde{\phi}_A$} (F);

\draw[->] (B) to node {$\sim$} (E);
\draw[->] (D) to node {$\sim$} (F);
\end{tikzpicture}$$
where $\ga^\circ = \ga\backslash D$ and $\tilde{\ga}^{\circ} = \ga\backslash\tilde{D}$. The dividing sets  $\tDA$ and $\tDB$, on the righthand side above, are identical when the cap is either northwestern or southeastern. They are both represented by the same Heegaard diagram and the map $\tilde{\phi}_A$ is identity. It follows that $\th_*$ is homotopic to identity.

\subsubsection{Relation (2)}

In order to see that disjoint bypass moves $\th \amalg \th' : \ga \to \eta$
commute we must cut the dividing set $\ga$ along the two disjointly embedded
disks corresponding to neighborhoods of the equators of our bypass moves to form $\ga^{\circ\circ} = \ga\backslash (D\amalg D')$. The arc algebra associated to a disjoint union splits, $\varphi : \alg(-(\W\amalg \W)) \xto{\sim} \alg(-\W)\ott_k \alg(-\W)$, the module $\bsd(\DA)\ott_k\bsd(\DA)$ appears in the pairing theorem:
$$\bsd(\ga) \xto{\sim} \bsda(\ga^{\circ\circ})\boxtimes\left[\bsd(\DA)\ott_k\bsd(\DA)\right]$$
and the disjoint union of Heegaard diagrams splits as a tensor product compatible with the isomorphism $\varphi$ above. Under this identification, the maps $\th_*$ and $\th'_*$ induced by $\th$ and $\th'$ correspond to different tensor factors and must commute by the standard algebraic fact that:
$$(1_{\ga^{\circ \circ}} \boxtimes [1_{A} \ott \th'_*]) (1_{\ga^{\circ\circ}} \boxtimes [\th_* \ott 1_{A}]) = (1_{\ga^{\circ\circ}} \boxtimes [\th_* \ott 1_{A}]) (1_{\ga^{\circ \circ}} \boxtimes [1_{A} \ott \th'_*])$$
where $1_{\ga^{\circ \circ}}$ and $1_A$ are used to denote the identity maps $1_{\bsda(\ga^{\circ\circ})}$ and $1_{\bsd(\DA)}$ respectively.

\subsubsection{Triangles}
Finally, it is necessary to see that the objects and the maps assigned by Equation \eqref{obdef} and Definition \ref{mapsdef} factor through the Postnikov localization constructed in Proposition \ref{triprop}

These choices form distinguished triangles because 
\begin{align*}
\bsd(\ga) &= \bsd(\ga\cup \DA) \\
          &\simeq \bsda(\ga^{\circ})\boxtimes \bsd(\DA)\\
          &\simeq \bsda(\ga^\circ) \boxtimes C(\phi_B)\\
          &\simeq C(1_{\bsda(\ga^{\circ})} \boxtimes \phi_B)\\
          &\simeq C(\th'_*)
\end{align*}
where the last equivalence corresponds to the commutative diagram in
Definition \ref{mapsdef} above after rotating the triangle. An analogue of
this argument appears in \cite[Thm. 4.1]{LOTBRANCHED}.

\newpage

\section{Appendix: Dg categories}\label{dgcatsec}

This section contains some materials about dg categories and the model structures. All of the definitions below are from the literature. More information about 
differential
graded categories can be found in \cite{Kellerdg, Toen} or \cite[\S 1]{DK};
consult \cite{TabAdd, Tabuada, T} for technical details. The language of
model categories is reviewed in the reference \cite[\S A.2]{LT}, more
details can be found in the references \cite{Hovey, Quillen}.

\begin{definition}\label{dgcatdef}
  A {\em dg category $\eC$ over $\aA$} is a category enriched in the monoidal category of chain complexes:
  $$\Hom_{\eC}(x,y) \in \Kom_k(\aA) \conj{ for all } x,y \in \Ob(\eC),$$
  such that composition in $\eC$ is a map in $\Kom_k(\aA)$. A {\em functor} $f : \eC \to \eD$ between two such dg categories is required to consist of maps in $\Kom_k(\aA)$:
\begin{equation}\label{dgfuneq}
  f_{x,y} : \Hom_\eC(x,y) \to \Hom_{\eD}(f(x),f(y)) \in \Kom_k(\aA)
\end{equation}  
  \end{definition}

A dg functor $f : \eC\to \eD$ is {\em fully faithful} when for any pair $x,y\in \Ob(\eC)$ the map $f_{x,y}$ in Eqn. \eqref{dgfuneq} is a isomorphism of chain complexes. If the homology $H^*(f_{x,y})$ induces an isomorphism for all pairs then $f_{x,y}$ is called {\em quasi-fully faithful}. A functor $f : \eC\to \eD$ is a {\em quasi-isomorphism} of dg categories when $H^*(f) : H^*(\eC) \to H^*(\eD)$ induces an equivalence of graded $k$-linear categories.

\newcommand{\Chd}{\Kom_k^*}
\newcommand{\Mod}{Mod}
\begin{example}
  The category of chain complexes $\Kom_k(\aA)$ is a subcategory $\Kom_k(\aA) \subset \Kom_k^*(\aA)$ of a dg category. The objects of $\Kom_k^*(\aA)$ are the chain complexes $(C,\partial_C)\in \Kom_k(\aA)$. The maps are now given by the chain complex $(Hom^*((C,\partial_C),(D,\partial_D)),\delta)$
  $$\Hom^n((C,\partial_C),(D,\partial_D)) := \prod_{m\in\ZZ} Hom(C^m, D^{n+m})$$
  with differential $\delta(f) := d_D f + (-1)^{n+1} f d_C$ for $f$ of degree $n$.
  \end{example}

When $\aA$ is $\Vect_k$, the category of dg categories will be denoted by $\dgcat$. Important for this paper is a sequence of localizations obtained by different model category structures on $\dgcat$.
\begin{equation}\label{loceq}
  \dgcat \xto{(1)} \Hqe \xto{(2)} \Hmo
  \end{equation}

{\bf Hqe:}
The first category $\Hqe := \dgcat[W^{-1}]$ is obtained by requiring quasi-isomorphisms $f\in W$ to be isomorphisms. In this model structure cofibrations are determined by the left lifting property with respect to fibrations and fibrations are dg functors $f : \eC \to \eD$ for which $f_{x,y}$ in Eqn. \eqref{dgfuneq} are surjective and
\begin{itemize}
\item For $x\in \Ob(\eC)$ and any homotopy equivalence $\b : f(x) \to y$ in $\eD$ there is a homotopy equivalence $\a : x \to z$ in $\eC$ so that $f(\a) = \b$.
  \end{itemize}
The inital object is the empty category $\emptyset$ with no objects and the
final object $0$ is the zero dg category consisting of one object with no
endomorphisms. In $\Hqe$ non-trivial dg categories are fibrant and cofibrant resolutions are can be obtained from Cobar-Bar construction.

{\bf Modules: } For any dg category there are associated categories of modules over that dg category.

A right dg module $M$ over a dg category $\eC$ is a dg functor $\eC^{\op}\to \Chd(\Vect_k)$. The dg category of such functors will be denoted by $\Mod_{\eC}$. The homology $H^*(M) : \eC^{op}\to \Vect_k^{\ZZ}$ of a dg module $M$ is the functor $c\mapsto H^*(M(c))$ taking values in graded vector spaces. A {\em quasi-isomorphism} $g : M\to N$ of dg modules is a map inducing an isomorphism between their respective homologies. The derived category $D(\eC)$ of dg modules over a dg category $\eC$ is obtained by inverting the quasi-isomorphisms $Q$
$$D(\eC) := \Mod_{\eC}[Q^{-1}]$$
This is a triangulated category \cite{Kellerder}.
If $f : \eC \to \eD$ is a dg functor then there is a pushforward functor $f_! : \Mod_\eC \to \Mod_\eD$ which is left adjoint to the pullback $f^* : \Mod_\eD\to \Mod_\eC$. These functors induce functors between derived categories
$$ f_! : D(\eC) \leftrightarrow D(\eD) : f^*.$$
A dg functor $f : \eC\to \eD$ is a {\em Morita equivalence} when $f^* : D(\eD) \to D(\eC)$ is an equivalence of triangulated categories.

{\bf Hmo:}
The category $\Hmo$ is obtained by inverting Morita equivalences $M$. 
$$\Hmo := \Hqe[M^{-1}]$$
The category $\Hmo$ is pointed: the dg category $1$ consisting of a single object and a single morphism is both initial and terminal. The cofibrant objects of $\Hmo$ and $\Hqe$ remain the same. Fibrant objects become pretriangulated dg categories as discussed in the next paragraph.

There is a full subcategory $\eC^{\perf} \subset \Mod_{\eC}$ consisting of
modules $M$ which are compact in $D(\eC)$. Since representable modules are compact the Yoneda embedding factors through the subcategory of perfect modules, giving a dg functor
$$\gamma : \eC\to \eC^{\perf}$$
A dg category $\eC$ is called {\em perfect} when $\gamma$ is a quasi-equivalence. A dg category $\eC$ in $\Hmo$ is fibrant if and only if it is perfect. So $\gamma$ is fibrant replacement. An explicit model for $\eC^{\perf}$ is given by the idempotent completion of the category of one-sided twisted complexes over $\eC$ \cite[\S 2.4]{D}.

{\bf Maps:}
To\"{e}n's theorem shows that maps $\eC \to \eD$ in $\Hqe$ and are given by bimodules $\eC\ott \eD^{\op}\to \Chd(\Vect_k)$ satisfying certain cofibrancy and representability conditions \cite{T}. If $\eD$ is fibrant then these are also the maps in $\Hmo$. Dg functors described above define maps in each of these settings.

{\bf Constructions in $\Hqe$ vs $\Hmo$:}
If $\eC\to \eD$ and $\eC\to \eE$ in $\Hqe$ then the homotopy pushout $\eD\sqcup^h_\eC \eE$ can be constructed by using the coproduct of dg categories on the associated pushout of cofibrant replacements. Since cofibrant objects in $\Hqe$ and $\Hmo$ agree the quotient $\Hqe\to \Hmo$ commutes with homotopy pushout.

Since all of the localization constructions in this article are homotopy pushouts, they are indifferent to the distinction between $\Hqe$ and $\Hmo$ in the manner described above.

\section{Glossary of notation}

 After Section 2 dg categories are ungraded over a field of characteristic 2.
 The homotopy category of dg categories $\Ho(\dgcat)$ over $k$ will be
 denoted by $\Hqe$ or $\Hmo$ when the equivalence relation is
 quasi-equivalence or Morita equivalence respectively. All surfaces denoted
 by $\Si$ are connected unless otherwise mentioned. $\Si_{g,n}$ is the
 orientable surface of genus $g$ with $n$ boundary components.

\noindent
\begin{multicols}{1}
\begin{list}{}{
  \renewcommand{\makelabel}[1]{#1\hfil}
}
\item[$-^\vee$] Prop. \ref{dualityfunprop}.
\item[$-^\op$] opposite category.
\item[$\pts$] points $\{ a_1,\ldots, a_{2k}\}$ in arc diagram, Def. \ref{arcddef}.
\item[$a_k, a'_k$] points in an arc diagram, Def. \ref{arcddef}.
\item[$\alg(\Ad)$] arc algebra, \cite{LOT, Zarev1}.
\item[$B$] bottom of $D^2$.
\item[$B$] $B\subset \Si$, Def. \ref{discodef}.
\item[$\bsd(\ga)$] Eqn. \ref{obdef}.
\item[$\eC$] dg category, After \S2 ungraded, see \S\ref{grading}.
\item[$c_i$] cocore of 1-handle.
\item[$\Co(\Si)$] geometric contact category or Y. Tian algebraic contact category.
\item[$d$] differential, $d^2 = 0$.
\item[$\dgcat$] category of dg categories, \cite{D, Toen}.
\item[$D', \bar{D}, \tilde{D}$] Def. \ref{Ttridef}, Def. \ref{didef}.
\item[$D^2$] unit disk.
\item[$e_k$] generator of $\P_n$, Def. \ref{Pdef}.
\item[$\spc(\ga)$] Def. \ref{eudef}.
\item[$\asurf$] surface of arc diagram, Def. \ref{surfdef}.
\item[$F(\partial \Ad)$] Prop. \ref{decatprop}
\item[$\ga$] dividing set, Def. \ref{divsetdef}.
\item[$\ga(\e_i)$] dividing set associated to $e_i$, \S\ref{nilTLsec}.
\item[$\ga_A,\ga_B,\ga_C$] bypass triangle, Prop. \ref{triprop}, \S\ref{bsfsec}.
\item[$\ga^\vee$] dual dividing set, Def. \ref{dualdef}, Prop. \ref{dualityfunprop}.
\item[$(\oplus_{i=1}^n \ga_i,p)$] convolution of dividing sets, Def. \ref{ptdef}.
\item[$\Ga(\Si)$] mapping class group, \S\ref{mcgsec}.
\item[$h_k$] 1-handle in $F(\zigzag_n)$.
\item[$\Ho(\eC)$] $[\eC]$ or $H^0(\eC)$, \cite{Toen}.
\item[$Hom^I$] Def. \ref{toenlocdef}.
\item[$Hom^T$] Def. \ref{anlocdef}.
\item[$Hom^{\inp{K}}$] Prop. \ref{sesprop}.
\item[$\Hmo$] Morita homotopy category, \cite{TabAdd}.
\item[$\Hqe$] homotopy category, \cite{Tabuada}.
\item[$i(x,y)$] geometric intersection number.
\item[$int(X)$] interior of $X$.
\item[$I,I',\bar{I}$] Def. \ref{intcatdef} and Def. \ref{Kdef}.
\item[$k$] ground field. After \S2, $char(k)=2$.
\item[$\kappa,\kappa'$] Def. \ref{intcatdef} and Def. \ref{didef}.
\item[$\inp{\kr}$] Prop. \ref{sesprop}.
\item[$K_0(\eC)$] Grothendieck group, \cite{TabAdd}.
\item[$\KoS$] Def. \ref{formaldef}.
\item[$\Ko^n(\Si,m)$] Thm. \ref{splitthm}.
\item[$\Ko^n_{\pm}(\Si,m)$] $\pm$-halves of $\Ko^n(\Si,m)$, \S\ref{possec}.
\item[$L_R\eC$] Def. \ref{intcatdef}.
\item[$L_S\eC$] Def. \ref{anlocexprop}.
\item[$m$] boundary points $m\subset \partial\Si$, \S \ref{curvessec}.
\item[$M$] a matching $M : \pts \to \{ 1,\ldots,k\}$ in arc diagram, Def. \ref{arcddef}.
\item[$\mu$] \S \ref{HFCodisksec}.
\item[$\zigzag_n$] zig-zag diagram for $(D^2,2n)$, Def. \ref{zigzagdef}.
\item[$\Mat(\eC)$] the additive closure, \S \ref{trisec}.
\item[$\P_n$] nil-Temperley-Lieb algebra, Def. \ref{Pdef}.
\item[$\NN$] $\NN = \{ 0 \} \cup \ZZ_+$.
\item[$nS^1$] Def. \ref{nsonedef}.
\item[$N(T)$] neighborhood of disk, Def. \ref{bypassattdef}.
\item[$\PKoS$] Def. \ref{pkosdef}.
\item[$\PPKoS$] Conj. \ref{cofibrem}.
\item[$\Q_n$] Y. Tian quiver, Def. \ref{YTquiverdef}.
\item[$r$] Basepoint automorphism, Cor \ref{rotationprop}.
\item[$\rho_{k,k\pm 1}$] Eqn. \eqref{rhogeneqn}.
\item[$\R_n$] Y. Tian disk category, Def. \ref{diskdef}.
\item[$R_\pm$] positive and negative regions, Def. \ref{divsetdef}.
\item[$S^2$] the 2-sphere.
\item[$\Si_{g,n}$] connected surface of genus $g$ with $n$ boundary components.
\item[$(\Si,m)$] pointed oriented surface, Def. \ref{psurfdef}.
\item[$\bar{\Si}$] orientation reversal, Prop. \ref{orrevprop}.
\item[$\partial_i \Si$] $i$th boundary component of $\Si$, Def. \ref{psurfdef}.
\item[$\th : \ga\to\ga'$] bypass move, Def. \ref{bypassattdef}.
\item[$\th_{i,j}$] Def. \ref{Ttridef}, Def. \ref{didef}.
\item[$T$] top of $D^2$.
\item[$(T,\ga,\ga')$] bypass attachment, Def. \ref{bypassattdef}.
\item[$\cp_\ga$] cap associated to $\ga$, \cite[Def. 2.5]{Zarev2}, \S \ref{bsfsec}.
\item[$\Xi$] Eqn. \eqref{locfun}, Def. \ref{formaldef}.
\item[$\Yi_n$] $\R_n^\pt$, Def. \ref{hulldiskdef}.
\item[$z$] basepoints $z = \{ z_1,\ldots, z_n\}$, $z_i\in\partial_i \Si$, Def. \ref{psurfdef}.
\item[$\z_C$] $z_C\in \Zi(\Ad)$, Def. \ref{elemdef}.
\item[$\ZZ_+$] $\{ 1,2, 3,\ldots\} \subset \ZZ$.
\item[$\ZZ/2$] $\ZZ/2\ZZ$.
\item[$\lns$] ordered set of lines, Def. \ref{arcddef}.
\item[$\Ad$] arc diagram, Def. \ref{arcddef}.
\item[$\Ad_i$] arc in arc diagram, Def. \ref{arcddef}.
\item[$\Ad_{0,n}$] arc diagram for $\Si_{0,n}$, Def. \ref{snparamdef}.
\item[$\Ad_{g,1}$] arc diagram for $\Si_{g,1}$, Def. \ref{symparcddef}.
\item[$\Zi(\Ad)$] set of elementary dividing sets, Def. \ref{elemdef}.
\end{list}
\end{multicols}


\begin{thebibliography}{10}


 \bibitem{BV} A. Beilinson and V. Vologodsky, {\em A Dg Guide to Voevodsky's Motives}, see \url{http://arxiv.org/abs/math/0604004}
\bibitem{BK} A. I. Bondal and M. M. Kapranov, {\em Enhanced Triangulated Categories}, Math. USSR. Sbornik, 70, 1991, no. 1, 93--107.

\bibitem{HondaGroup} V. Colin, P. Ghiggini and K. Honda, {\em The equivalence of Heegaard Floer homology and embedded contact homology via open book decompositions I-III}, arXiv:1208.1074, arXiv:1208.1077, arXiv:1208.1526.

\bibitem{C} B. Cooper, {\em Formal contact categories II}, in preparation.

\bibitem{Donaldson} S. K. Donaldson, {\em The Seiberg-Witten equations and 4-manifold topology}, Bull. Amer. Math. Soc., 33, 1996, 45--70.

\bibitem{D} V. Drinfeld, {\em DG quotients of DG categories}, J. Algebra, 272, 2004, no. 2, 643--691.

\bibitem{Dyckerhoff} T. Dyckerhoff, {\em Compact generators in categories of matrix factorizations}, Duke Math. J., 159, 2011, no. 2, 223--274.
\bibitem{DK} T. Dyckerhoff and M. M. Kapranov, {\em Triangulated surfaces in triangulated categories}, arXiv:1306.2545v3.

\bibitem{EVVZ} J. B. Etnyre, D. S. Vela--Vick and R. Zarev, {\em Sutured Floer homology and invariants of Legendrian and transverse knots}, arXiv:1408.5858.

\bibitem{FarbMargalit} B. Farb and D. Margalit, {\em A primer on mapping class groups.} Princeton Mathematical Series, 49. Princeton University Press, 2012.

\bibitem{Freedman} M. H. Freedman, {\em A magnetic model with a possible Chern-Simons phase}, Appendix by F. Goodman and H. Wenzl,  Comm. Math. Phys., 234, 2003, no. 1, 129--183.

\bibitem{GM} S. I. Gelfand, Y. Manin, {\em Methods of homological algebra}, Springer 1988.
\bibitem{G} E. Giroux, {\em Structures de contact sur les vari\'{e}t\'{e}s fibr\'{e}es en cercles au-dessus d'une surface}, Comment. Math. Helv., 76, 2001, 218--262.
\bibitem{HondaTian} K. Honda and Y. Tian, {\em Contact categories of disks}, arXiv:1608.08325.
\bibitem{KoHo} K. Honda, {\em Contact structures, Heegaard Floer homology and triangulated categories}, In preparation.
\bibitem{KoGluing} K. Honda, {\em Gluing tight contact structures}, Duke Math. J., 3, 2002, 435--478.
\bibitem{KoOn} K. Honda, {\em On the classification of tight contact structures I}, Geometry and Topology, vol. 4, 2000, 309--368.

\bibitem{Hovey} M. Hovey, {\em Model Categories}, American Mathematical Society, Mathematical Surveys and Monographs, Vol. 63, 2007.

\bibitem{H1} M. Hutchings, {\em An index inequality for embedded pseudoholomorphic curves in symplectizations}, J. of Eur. Math. Soc., 4, 2002, no. 4  313--361.

\bibitem{H2} M. Hutchings and C. H. Taubes, {\em Gluing pseudoholomorphic curves along branched covered cylinders I}, J. Symplectic Geom., 5, 2007, no. 1, 43--137.

\bibitem{H3} M. Hutchings and C. H. Taubes, {\em Gluing pseudoholomorphic curves along branched covered cylinders II}, J. Symplectic Geom., 7, 2009, no. 1, 29--133.


\bibitem{AJ} A. Juh\'{a}sz, {\em A survey of Heegaard Floer homology}, New Ideas in Low Dimensional Topology, World Scientific, 2014, 237--296.
\bibitem{KL} L.H. Kauffman and S.L. Lins, {\em Temperley-Lieb recoupling theory and invariants of $3$-manifolds}, Princeton University Press, 1994.
\bibitem{Kellerder} B. Keller, {\em Deriving DG categories}, Annales scientifique de l'\'{E}.N.S., 4, 27, 1997, no. 1, 63--102.
\bibitem{Kellerdg} B. Keller, {\em On differential graded categories}, ICM 2006.

\bibitem{K} M. Kontsevich, {\em Symplectic geometry of homological algebra}, 
see \url{http://www.ihes.fr/~maxim/TEXTS/Symplectic_AT2009.pdf}

\bibitem{KM} P. Kronheimer and T. Mrowka, {\em Monopoles and Three-manifolds}, New Math. Monogr. 10, Cambridge Univ. Press, 2007.
\bibitem{TaubesGroup} \c{C}. Kutluhan, Y-J. Lee and C. H. Taubes, {\em HF=HM I-V : Heegaard Floer homology and Seiberg--Witten Floer homology}, arXiv:1007.1979, arXiv:1008.1595, arXiv:1010.3456, arXiv:1107.2297,  arXiv:1204.0115.

\bibitem{LOT} R. Lipshitz, P. Ozsv\'{a}th and D. Thurston, {\em Bordered Heegaard Floer homology: Invariance and Pairing},  arXiv:0810.0687.

\bibitem{LOTBRANCHED} R. Lipshitz, P. Ozsv\'{a}th and D. Thurston, {\em Bordered Floer homology and the branched double cover I}, Journal of Topology, 7, 2014, no. 4, 1155--1199.

\bibitem{LOTMCG} R. Lipshitz, P. Ozsv\'{a}th and D. Thurston, {\em A faithful linear-categorical action of the mapping class group of a surface with boundary}, J. of the EMS,  15, 2013, no. 4, 1279--1307.

\bibitem{L} J. Lurie, {\em Higher Algebra}, see \url{http://www.math.harvard.edu/~lurie/papers/higheralgebra.pdf}
\bibitem{LT} J. Lurie, {\em Higher Topos Theory}, see \url{http://www.math.harvard.edu/~lurie/papers/croppedtopoi.pdf}

\bibitem{Mathews} D. V. Mathews, {\em Strand algebras and contact categories}, arXiv:1608.02710.

\bibitem{Murakami} J. Murakami, {\em The multivariable Alexander polynomial and a one-parameter family of representations of $U_q(\mathfrak{sl}(2,\CC))$ at $q^2 = -1$}, Lecture Notes in Math., 1510, Springer, Berlin, 1992.

\bibitem{Nadler} D. Nadler, {\em Cyclic symetries of $A_n$-quiver representations}, arXiv:1306.0070v2.

\bibitem{OZ2} P. Ozsv\'{a}th and Z. Szab\'{o}, {\em Holomorphic disks and topological invariants for closed three-manifolds}, Ann. of Math. 159, 2004, no. 3, 1027--1158.

\bibitem{OZ1} P. Ozsv\'{a}th and Z. Szab\'{o}, {\em Holomorphic disks and 3-manifold invariants: properties and applications}, Ann. of Math., 159, 2004, no. 3, 1159--1245.

\bibitem{PV} I. Petkova and V. V\'{e}rtesi, {\em Combinatorial tangle Floer homology}, Geometry \& Topology to appear, arXiv:1410.2161.


\bibitem{Quillen} D. Quillen, {\em Homotopical Algebra}, Lecture Notes in
  Mathematics, 43, Springer--Verlag, Berlin--New York, 1967.

\bibitem{TabAdd} G. Tabuada, {\em  Invariants Additifs de dg-cat\'{e}gories}, IMRN, 53, 2005, 3309--3339.

\bibitem{TabuadaLoc} G. Tabuada, {\em On Drinfeld's dg quotient}, Journal of Algebra, 323, 2010, 1226--1240.

\bibitem{Tabuada} G. Tabuada, {\em Une structure de cat\'{e}gorie de mod\`{e}les de Quillen sur la cat\'{e}gorie des dg-cat\'{e}gories}, C. R. Acad. Sci. Paris S\'{e}r. I Math., 340, 2005, no. 1, 15--19.

\bibitem{Taubes1} C. H. Taubes, {\em Embedded contact homology and Seiberg-Witten Floer cohomology I--V}, Geom. Topol., 14, 2010, no. 5, 2497--3000.

\bibitem{YT1} Y. Tian, {\em A categorification of $U_qsl(1\vert 1)$ as an algebra}, see \url{http://arxiv.org/abs/1210.5680}
\bibitem{YT2} Y. Tian, {\em A categorification of $U_T sl(1,1)$ and its tensor product representations}, Geom. Topol., 18, 2014, 1635--1717.

\bibitem{YTD} Y. Tian, {\em Private communication}, 2015.





\bibitem{Toen} B. To\"{e}n, {\em Lectures on dg-categories}, Topics in algebraic and topological K-Theory, Lect. Notes in Math., 2008, 243--302.
\bibitem{T} B. To\"{e}n, {\em The homotopy theory of dg-categories and derived Morita theory}, Invent. Math., 167, 2007, no. 3, 615--667.

\bibitem{Viro} O. Viro, {\em Quantum relatives of the Alexander polynomial}, Algebra i Analiz, 18, 2006, no. 3, 63--157.

\bibitem{Walker2} K. Walker, {\em TQFTs [early incomplete draft]}, 2006, see \url{http://canyon23.net/math/}.

\bibitem{Webster} B. Webster, {\em Tensor product algebras, Grassmannians and Khovanov homology}, arXiv:1312.7357v1.

\bibitem{W} E. Witten, {\em Monopoles and 4-manifolds}, Math. Res. Letters, 1, 1994, 769--796.


\bibitem{Zarev1} R. Zarev, {\em Bordered Floer homology for sutured manifolds}, arXiv:0908.1106.
\bibitem{Zarev2} R. Zarev, {\em Joining and gluing sutured Floer homology}, arXiv:1010.3496v1.


\end{thebibliography}
\end{document}